\newcommand {\bd} {\begin{displaymath}}
\newcommand {\ed} {\end{displaymath}}
\newcommand {\be} {\begin{equation}}
\newcommand {\ee} {\end{equation}}
\newcommand {\bea} {\begin{eqnarray}}
\newcommand {\eea} {\end{eqnarray}}
\newtheorem{lemma}{Lemma}
\newtheorem{theorem}{Theorem}
\newtheorem{proposition}{Proposition}
\newtheorem{example}{Example}
\newtheorem{remark}{Remark}
\def\g{\mathfrak{g}}
\begin{document}

\title[]{On the  characteristic polynomial of  Cartan matrices and Chebyshev polynomials}
\date{}

\author[]{Pantelis A.~Damianou}
\email{damianou@ucy.ac.cy}
\address{Department of Mathematics and Statistics\\
University of Cyprus\\
P.O.~Box 20537, 1678 Nicosia\\Cyprus}

\keywords{Cartan matrix, Chebyshev polynomials, simple Lie algebras, Coxeter polynomial, Cyclotomic polynomials}
\subjclass{ 33C45, 17B20, 20F55}

\thanks{}

\begin{abstract}
We explore some interesting features of the  characteristic polynomial of  the  Cartan matrix of a  complex simple Lie algebra. The characteristic polynomial  is closely related with the Chebyshev polynomials of first and second kind. In addition, we give explicit formulas for the characteristic polynomial of the Coxeter adjacency matrix, we compute the associated polynomials  and use them to derive the Coxeter polynomial of the underlying graph.  We  determine the expression of the Coxeter  and associated polynomials as a product of cyclotomic  factors. We use this data to propose an algorithm for factoring Chebyshev polynomials over the integers. Finally, we prove an interesting sine formula which involves  the exponents, the Coxeter number and the determinant of the Cartan matrix.
\end{abstract}

\maketitle

\section{Introduction}

The aim  of this paper is to explore  the intimate connection between Chebyshev polynomials and root systems of complex  simple Lie algebras. Chebyshev polynomials are used to generate  the characteristic and associated polynomials of Cartan and adjacency matrices and conversely one can use machinery  from Lie theory to derive properties of Chebyshev polynomials. Many  of the results in this paper are well-known but we re-derive them in an elementary fashion using  properties of  Chebyshev polynomials. In essence, this paper is mostly a survey but it also includes some new results. For example, the explicit factorization of the Chebyshev polynomials in terms of  the polynomials $\psi_j(x)$ is new.

Cartan matrices appear in the classification of  simple   Lie algebras over the complex numbers.  A Cartan matrix is associated to each such  Lie algebra. It is an $\ell \times \ell$ square matrix where $\ell$ is the rank of the Lie algebra. The Cartan matrix encodes all the properties of the simple Lie algebra it represents.
Let $\mathfrak{g}$ be a simple complex  Lie algebra, $\mathfrak{h}$ a Cartan subalgebra and    $\Pi=\{ \alpha_1, \dots \alpha_{\ell} \}$ a basis of simple roots for the root system $\Delta$ of $\mathfrak{h}$ in
$\mathfrak{g}$.    The elements of the Cartan matrix $C$ are given by

 \be \label{cij}  c_{ij} :=      2 \frac{  \left( \alpha_i, \alpha_j\right) } { \left( \alpha_j, \alpha_j \right)}  \ee

  where the inner product is induced by the Killing form.  The $\ell \times \ell$-matrix $C $  is invertible.  It is called the \emph{Cartan matrix} of $\g$.  The Cartan matrix for a  complex simple Lie algebra obeys  the following properties:
\begin{enumerate}
\item
$C$ is symmetrizable. There exists a diagonal matrix $D$ such that $DC$ is symmetric.
\item
$c_{ii}=2 $.
\item
$ c_{ij} \in \{ 0, -1, -2,-3 \}$ for $i \not=j$.
\item
$c_{ij}=0  \Leftrightarrow c_{ji}=0$.

\end{enumerate}

 The detailed  machinery for constructing the Cartan matrix from the root system can be found e.g. in \cite[p. 55]{humphreys} or \cite[p. 111]{knapp}.  In the following example, we give full details for the case of $sl(4, \mathbb{C})$ which is of type $A_3$.

  \begin{example}

{\rm Let $E$ be the hyperplane  of $\mathbb{R}^4$ for which the coordinates sum to $0$ (i.e. vectors orthogonal to $(1,1,1,1)$). Let $\Delta$ be the set of vectors in $E$ of length $\sqrt{2}$ with integer coordinates.  There are $12$ such vectors in all.  We use the standard inner product in $\mathbb{R}^4$ and the standard orthonormal basis $\{ \epsilon_1, \epsilon_2, \epsilon_3,  \epsilon_4 \}$. Then, it is easy to see that $\Delta = \{ \epsilon_i-\epsilon_j \ | \ i \not= j \}$.  The vectors
\bd
 \begin{array}{lcl}
\alpha_1 & =& \epsilon_1 -\epsilon_2 \\
\alpha_2 & =& \epsilon_2 -\epsilon_3 \\
\alpha_3 & =& \epsilon_3 -\epsilon_4 \
\end{array}
 \ed
form a basis of the root system in the sense that each vector in $\Delta$ is a linear combination of these three vectors with integer coefficients,  either all nonnegative or all nonpositive. For example,  $\epsilon_1 -\epsilon_3=\alpha_1+\alpha_2$, $ \epsilon_2 -\epsilon_4 =\alpha_2+\alpha_3$ and $\epsilon_1-\epsilon_4=\alpha_1+\alpha_2+\alpha_3$.  Therefore $\Pi=\{\alpha_1, \alpha_2, \alpha_3 \}$,  and the set of positive roots $\Delta^{+}$  is given by
\bd \Delta^{+}= \{ \alpha_1, \alpha_2, \alpha_3, \alpha_1+\alpha_2, \alpha_2+\alpha_3, \alpha_1+\alpha_2+\alpha_3  \} \ . \ed
Define the matrix $C$ using (\ref{cij}). It is clear that $c_{ii}=2 $ and

\bd c_{i,i+1}=2 \frac{  (\alpha_i, \alpha_{i+1}) } { (\alpha_{i+1}, \alpha_{i+1}) } =-1  \ \ \ \ \ \ i=1,2  \ . \ed

Similar calculations lead to the following form  of the Cartan matrix
\bd C= \begin{pmatrix}
 2 &  -1 & 0 \cr
  -1&  2 & -1 \cr
  0& -1 & 2 \end{pmatrix} \ .
  \ed

}

\end{example}

The  complex simple Lie algebras are classified  as:
\bd
A_l, B_l, C_l, D_l, E_6, E_7, E_8, F_4, G_2 \ .
\ed
Traditionally, $   A_l, B_l, C_l, D_l   $ are called the classical Lie algebras while  $E_6, E_7, E_8, F_4, G_2  $ are called  the exceptional  Lie algebras.
Moreover, for any Cartan matrix there exists just one simple complex  Lie algebra up to isomorphism
giving rise to it. The classification of simple complex  Lie algebras is due to Killing and Cartan around 1890.  Simple Lie algebras over ${\bf C}$ are classified by using the associated  Dynkin diagram. It is a graph whose vertices correspond to the elements of  $\Pi$. Each pair of vertices
$\alpha_i$, $\alpha_j$ are connected by
\bd
m_{ij}={ 4 (\alpha_i, \alpha_j)^2 \over (\alpha_i,\alpha_i) (\alpha_j, \alpha_j) }
\ed
edges, where
\bd m_{ij} \in \{ 0,1,2,3 \}  \ . \ed

\newpage
\centerline{\bf Dynkin Diagrams for simple Lie algebras}
\bigskip
$
\put(25,20){\line(1,0){50}}
\put(85,20){\line(1,0){50}}
\put(205,20){\line(1,0){50}}
\put(20,20){\circle{10}}
\put(80,20){\circle{10}}
\put(140,20){\circle{10}}
\put(200,20){\circle{10}}
\put(260,20){\circle{10}}
\put(160,20){$\ldots$}
\put(0,50){$\boldsymbol{A_{n}}$}
$



$%
\put(25,20){\line(1,0){50}}
\put(85,20){\line(1,0){50}}
\put(205,19){\line(1,0){50}}
\put(205,21){\line(1,0){50}}
\put(20,20){\circle{10}}
\put(80,20){\circle{10}}
\put(140,20){\circle{10}}
\put(200,20){\circle{10}}
\put(260,20){\circle{10}}
\put(160,20){$\ldots$}
\put(0,50){$\boldsymbol{B_{n}}$}
\put(227,17.5){$\gg $}
$

$%
\put(25,20){\line(1,0){50}}
\put(85,20){\line(1,0){50}}
\put(205,19){\line(1,0){50}}
\put(205,21){\line(1,0){50}}
\put(20,20){\circle{10}}
\put(80,20){\circle{10}}
\put(140,20){\circle{10}}
\put(200,20){\circle{10}}
\put(260,20){\circle{10}}
\put(160,20){$\ldots$}
\put(0,50){$\boldsymbol{C_{n}}$}
\put(227,17.5){$\ll $}
$

$
\put(25,20){\line(1,0){50}}
\put(85,20){\line(1,0){50}}
\put(205,20){\line(1,0){50}}
\put(263.5,23.5){\line(1,1){33}}
\put(263.5,16.5){\line(1,-1){33}}
\put(20,20){\circle{10}}
\put(80,20){\circle{10}}
\put(140,20){\circle{10}}
\put(200,20){\circle{10}}
\put(260,20){\circle{10}}
\put(300,60){\circle{10}}
\put(300,-20){\circle{10}}
\put(160,20){$\ldots$}
\put(0,50){$\boldsymbol{D_{n}}$}
%
$

$
\put(25,20){\line(1,0){50}}
\put(85,20){\line(1,0){50}}
\put(205,20){\line(1,0){50}}
\put(145,20){\line(1,0){50}}
\put(140,15){\line(0,-1){30}}
\put(20,20){\circle{10}}
\put(80,20){\circle{10}}
\put(140,20){\circle{10}}
\put(200,20){\circle{10}}
\put(260,20){\circle{10}}
\put(140,-20){\circle{10}}
\put(0,50){$\boldsymbol{E_{6}}$}
$

$
\put(25,20){\line(1,0){50}}
\put(85,20){\line(1,0){50}}
\put(205,20){\line(1,0){50}}
\put(145,20){\line(1,0){50}}
\put(140,15){\line(0,-1){30}}
\put(265,20){\line(1,0){50}}
\put(20,20){\circle{10}}
\put(80,20){\circle{10}}
\put(140,20){\circle{10}}
\put(200,20){\circle{10}}
\put(260,20){\circle{10}}
\put(140,-20){\circle{10}}
\put(320,20){\circle{10}}
\put(0,50){$\boldsymbol{E_{7}}$}
%
$

$
\put(25,20){\line(1,0){30}}
\put(65,20){\line(1,0){30}}
\put(105,20){\line(1,0){30}}
\put(145,20){\line(1,0){30}}
\put(185,20){\line(1,0){30}}
\put(225,20){\line(1,0){30}}
\put(100,15){\line(0,-1){30}}
\put(20,20){\circle{10}}
\put(60,20){\circle{10}}
\put(100,20){\circle{10}}
\put(140,20){\circle{10}}
\put(180,20){\circle{10}}
\put(220,20){\circle{10}}
\put(260,20){\circle{10}}
\put(100,-20){\circle{10}}
\put(0,50){$\boldsymbol{E_{8}}$}
$

$
\put(85,0){\line(1,0){50}}
\put(205,0){\line(1,0){50}}
\put(145,1){\line(1,0){51}}
\put(145,-1){\line(1,0){51}}
\put(80,0){\circle{10}}
\put(140,0){\circle{10}}
\put(200,0){\circle{10}}
\put(260,0){\circle{10}}
\put(0,30){$\boldsymbol{F_{4}}$}
\put(164,-2.5){$\gg $}
$


$
\put(145,0){\line(1,0){50}}
\put(145,1.5){\line(1,0){50.5}}
\put(145,-1.5){\line(1,0){50.5}}
\put(140,0){\circle{10}}
\put(200,0){\circle{10}}
\put(0,30){$\boldsymbol{G_{2}}$}
\put(164,-2.5){$\ll $}
%
$

 For more details on these classifications see \cite{humphreys}, \cite{kac}.
The following result   is useful in computing the characteristic polynomial of the Cartan matrix.

\begin{proposition} \label{th1}
Let $C$ be the $n \times n$  Cartan matrix of a simple Lie algebra over $\mathbf{ C}$.  Let $p_n(x)$ be its characteristic polynomial.  Then
\begin{displaymath}  p_n(x)=q_n \left(\frac{x}{2} -1 \right)   \end{displaymath} where $q_n$ is a polynomial related to Chebyshev polynomials as follows:

\begin{align*}
A_n:  \ \ \ \ \ &  \ \  q_n= U_n \\
B_n,  C_n:   &  \ \ q_n=2 T_n \\
D_n:  \ \ \ \ \  & \ \    q_n=4x T_{n-1}
\end{align*}

where $T_n$ and $U_n$ are the Chebyshev polynomials of first and second kind respectively.
\end{proposition}

To a given Dynkin diagram $\Gamma$ with $n$ nodes we  associate the \textit{ Coxeter adjacency matrix} which is  the $n\times n$  matrix $A=2I-C$  where  $C$ is the Cartan matrix. The \textit{characteristic polynomial} of $\Gamma$ is that of $A$. Similarly the \textit{norm} of $\Gamma$ is defined to be the norm of $A$. One  defines, see \cite{doobs},  the \textit{spectral radius} of $\Gamma $ to be
                  \bd  \rho (\Gamma)= max\{|\lambda|: \lambda  \ is \ an \  eigenvalue \ of \ A \} \ . \ed

 If the graph is a tree then  the characteristic polynomial $p_A$ of the adjacency matrix is simply related to the characteristic polynomial of the Cartan matrix $p_C$. In fact:

\be \label{ptoa}  a_n(x)=p_n (x+2) \ .  \ee

Using the fact that the spectrum  of $A$ is  the same as the spectrum of $-A$ it follows easily that if $\lambda $ is an eigenvalue of the adjacency matrix then $2+\lambda$ is an eigenvalue of the corresponding Cartan matrix.

In this paper we use the following notation.  Note that the subscript $n$,  in all cases,  is equal to the degree of the polynomial except in the case of  $Q_n(x)$ which is of degree $2n$.

\begin{itemize}
\item
$p_n(x)$ will denote the characteristic polynomial of the Cartan matrix.

  \item $a_n(x)$ will denote the characteristic polynomial of the adjacency matrix.   Note that
\bd a_n(x)=p_n(x+2)=q_n( \frac{x}{2})  \ . \ed

\item
Finally we define the associated polynomial
\bd Q_n(x)=x^n a_n (x + \frac{1}{x})  \ . \ed
\end{itemize}

$Q_n(x)$ turns out to be an even, reciprocal  polynomial of the form $Q_n(x)=f_n(x^2)$, (see \cite{moody}, \cite{Lenzing}). The polynomial $f_n$ is the so called Coxeter polynomial of the underlying graph. For the definition and spectral properties  of the Coxeter polynomial see, \cite{aCampo},   \cite{moody}, \cite{coleman}, \cite{coxeter}, \cite{howlett},  \cite{kostant}, \cite{lakatos}, \cite{ringel}, \cite{steinberg}, \cite{steko}.
The roots of $Q_n$ in the cases we consider  are in the unit disk and therefore by a theorem of Kronecker, see \cite{damianou},  $Q_n(x)$  is a product of cyclotomic polynomials. We determine the factorization of $f_n$ as a product of cyclotomic polynomials. This factorization in turn determines  the factorization of $Q_n$.   The irreducible factors of $Q_n$ are in one-to-one correspondence with the irreducible factors of $a_n(x)$.  As a bi-product we obtain the factorization of the Chebyshev polynomials of the first and second kind over the integers. More precisely we prove the following result:

\begin{theorem}
Let $\psi_n(x)$ be the minimal polynomial of the algebraic integer $2 \cos \frac{2 \pi}{n}$. Then

\bd U_n(x)=\prod_{\substack{ j|2n+2 \\  j\not=1,2}} \psi_j(2x) \ . \ed
Let $n=2^{\alpha} N$ where $N$ is odd and let $r=2^{\alpha+2}$. Then

\bd  T_n(x)=\frac{1}{2}\prod_{\substack{ j|N \\   }} \psi_{r j}(2x) \ . \ed
\end{theorem}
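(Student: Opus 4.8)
The plan is to use the known factorizations of the associated/Coxeter polynomials in types $A_n$ and $B_n$ (equivalently $C_n$), translate them through the chain of substitutions $p_n(x) = q_n(x/2 - 1)$, $a_n(x) = p_n(x+2) = q_n(x/2)$ into statements about $U_n$ and $T_n$, and then recognize the resulting cyclotomic factors as minimal polynomials of $2\cos\frac{2\pi}{j}$. First I would recall the elementary identities connecting Chebyshev polynomials to roots of unity: $U_n(\cos\theta) = \frac{\sin(n+1)\theta}{\sin\theta}$, so the roots of $U_n$ are $\cos\frac{k\pi}{n+1}$ for $k = 1,\dots,n$; and $T_n(\cos\theta) = \cos n\theta$, so the roots of $T_n$ are $\cos\frac{(2k-1)\pi}{2n}$ for $k = 1,\dots,n$. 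Equivalently, the roots of $U_n(x)$ are $2\cos\frac{k\pi}{n+1} = 2\cos\frac{2\pi k}{2n+2}$, and each such real algebraic integer has minimal polynomial $\psi_j$ where $j = \frac{2n+2}{\gcd(k, 2n+2)}$; collecting the roots according to the value of $j$ shows that $j$ ranges over divisors of $2n+2$ with $j \neq 1, 2$ (the values $j = 1, 2$ would force $2\cos\frac{2\pi}{j} = \pm 2$, which is not a root since $\sin\theta \neq 0$ is needed), each $\psi_j(2x)$ appears exactly once because $U_n$ has distinct roots and $\deg \psi_j = \varphi(j)/?$ matches the count $\sum_{j \mid 2n+2,\, j\neq 1,2} \ (\text{number of primitive-type residues})= 2n$. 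The $T_n$ case is analogous: the roots $2\cos\frac{(2k-1)\pi}{2n} = 2\cos\frac{2\pi(2k-1)}{4n}$ have odd numerator, so writing $n = 2^\alpha N$ with $N$ odd and $r = 2^{\alpha+2}$, the reduced denominator $j = \frac{4n}{\gcd(2k-1,4n)}$ is always of the form $r\cdot d$ with $d \mid N$, and as $k$ varies each divisor $d \mid N$ is hit, contributing the factor $\psi_{rj}(2x)$; the leading coefficient $2^{n-1}$ of $T_n$ versus the monic $\psi$'s accounts for the $\tfrac12$ (since $\prod \psi_{rd}(2x)$ is monic of degree $n$ in $x$ with leading coefficient $2^n$).

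A cleaner route, staying inside the paper's framework, is to invoke the earlier theorems directly. From Theorem~\ref{th1}, in type $A_n$ we have $a_n(x) = q_n(x/2) = U_n(x/2)$, so $U_n(x) = a_n(2x)$; and the characteristic polynomial $a_n$ of the path graph on $n$ vertices is well known (and will have been factored earlier in the paper via $Q_n$) as a product of cyclotomic-type factors indexed exactly by the divisors of $2n+2$ other than $1$ and $2$. Substituting $x \mapsto 2x$ turns each factor into $\psi_j(2x)$ and yields the first formula. Similarly for $B_n$/$C_n$, $a_n(x) = q_n(x/2) = 2T_n(x/2)$, so $T_n(x) = \tfrac12 a_n(2x)$, and the factorization of $a_n$ for the $B_n$ diagram (the path with one double edge) into cyclotomic factors indexed by the divisors $rd$, $d \mid N$, gives the second formula after the substitution, with the $\tfrac12$ carried along from the relation $q_n = 2T_n$.

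The main obstacle I anticipate is the bookkeeping of indices: verifying precisely which divisors $j$ of $2n+2$ occur (and that each occurs with multiplicity one) in the $U_n$ case, and in the $T_n$ case showing that the $2$-adic valuation of the reduced denominator is exactly $\alpha + 2$ for every root — i.e., that $\gcd(2k-1, 4n)$ always strips off exactly the factor $4$ from $4n = 2^{\alpha+2}N$ down to... wait, it strips nothing of the $2$-part since $2k-1$ is odd, so $v_2(j) = v_2(4n) = \alpha+2$ always, which is exactly why $r = 2^{\alpha+2}$ appears. One must also confirm the degree count matches on both sides and track the constant $\tfrac12$ through the normalization $q_n = 2T_n$ carefully, but these are routine once the index sets are pinned down. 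I would organize the write-up as: (i) recall roots of $U_n, T_n$; (ii) identify the minimal polynomial of each root as $\psi_j$, determining $j$; (iii) group roots by $j$ and check multiplicities/degrees; (iv) read off the product formulas, inserting the leading-coefficient correction.
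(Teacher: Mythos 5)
Your proposal is correct and follows essentially the same route as the paper: identify the roots of $U_n$ (resp.\ $T_n$) as $2\cos\frac{2\pi k}{2n+2}$ (resp.\ $2\cos\frac{2\pi(2k-1)}{4n}$), group them into Galois conjugacy classes indexed by the reduced denominator $j$, recognize each class as the root set of Lehmer's irreducible $\psi_j$, and fix the leading coefficient; the paper merely routes this grouping through the associated polynomial $Q_n$ and the cyclotomic factors $\Phi_j$ before descending to $\psi_j$ via $\Phi_j(x)=x^{\varphi(j)/2}\psi_j(x+\frac{1}{x})$, which is the ``cleaner route'' you also sketch. The index bookkeeping you flag (in particular that $v_2(j)=\alpha+2$ for every root of $T_n$, and the degree counts $\sum_{j\mid 2n+2,\,j\neq 1,2}\varphi(j)/2=n$ and $\sum_{d\mid N}\varphi(rd)/2=n$) works out exactly as you anticipate.
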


The irreducible polynomials $\psi_n$ were introduced by Lehmer in \cite{lehmer}. The factorization is consistent with previous results, e.g.  \cite{hsiao}, \cite{rayes}.

Using the factorization of the polynomials $a_n(x)$ and $p_n(x)$ we obtain the following interesting sine formula. See \cite{damianou3} for various appearances and interesting connections.

\begin{theorem}
Let  $\mathfrak{g}$ be a complex simple Lie algebra of rank $\ell$,  $h$ the Coxeter number,  $m_1, m_2, \dots, m_{\ell}$ the exponents of $\mathfrak{g}$,  and $C$ the Cartan matrix.  Then

\bd 2^{2 \ell} \prod_{i=1}^{\ell}  \sin^2 {\frac{ m_i \pi}{2h}     } ={\rm det}\ C  \ . \ed
\end{theorem}

\section{Chebyshev polynomials} \label{cp}

To compute explicitly $p_n(x)$, we use   the  following result,  see \cite{coxeter} and  \cite{koranyi}.
\begin{proposition} \label{main}
Let $C$ be the $n \times n$  Cartan matrix of a simple Lie algebra over $\mathbb{ C}$.  Let $p_n(x)$ be its characteristic polynomial and define $q_n(x)={\rm det} (2x I +A)$.  Then
\begin{displaymath}  p_n(x)=q_n \left(\frac{x}{2} -1 \right),  \ \ \  and \ \ \  a_n(x)=q_n \left( \frac{x}{2}\right)  \ . \end{displaymath}

The polynomial   $q_n$ is  related to Chebyshev polynomials as follows:

\begin{align*}
A_n:  \ \ \ \ \ &  \ \  q_n= U_n \\
B_n,  C_n:   &  \ \ q_n=2 T_n \\
D_n:  \ \ \ \ \  & \ \    q_n=4x T_{n-1}
\end{align*}
where $T_n$ and $U_n$ are the Chebyshev polynomials of first and second kind respectively.
\end{proposition}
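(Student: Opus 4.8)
The plan is to compute the determinant $q_n(x) = \det(2xI + A)$ directly for each of the three families $A_n$, $B_n/C_n$, $D_n$ by exploiting the tridiagonal (or almost-tridiagonal) structure of the adjacency matrix $A = 2I - C$, and then to recognize the resulting recursion as one satisfied by the Chebyshev polynomials. First I would set up notation: write $d_n(x) = \det(2xI + A_n)$ where $A_n$ is the adjacency matrix of the path on $n$ vertices (the $A_n$ Dynkin diagram). Expanding the determinant along the last row and column gives the three-term recurrence $d_n = 2x\, d_{n-1} - d_{n-2}$, with initial data $d_0 = 1$, $d_1 = 2x$. This is exactly the recurrence and initialization for the Chebyshev polynomials of the second kind $U_n(x)$, so $d_n = U_n$; that settles the $A_n$ case. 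The identities $p_n(x) = q_n(x/2 - 1)$ and $a_n(x) = q_n(x/2)$ are then immediate from the definitions $A = 2I - C$, $p_n(x) = \det(xI - C)$, $a_n(x) = \det(xI - A)$, simply by substituting and factoring a scalar.

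For $B_n$ and $C_n$ the adjacency matrix is again tridiagonal, differing from the $A_n$ case only at one end (the double bond contributes a single off-diagonal entry equal to $2$ in the symmetrized picture, or more precisely one has entries $1$ and $2$ in the two symmetric positions, but the \emph{characteristic} polynomial only sees the product, hence behaves as if there were a single modified entry). Expanding along that modified end, one gets $q_n = 2x\, q_{n-1} - 2 d_{n-2}$ where $d_{n-2} = U_{n-2}$ from the previous paragraph; using $2T_n = U_n - U_{n-2}$ and the $U$-recurrence, one checks this collapses to $q_n = 2T_n$. Equivalently, and more cleanly, I would verify directly that $2T_n$ satisfies the same recurrence $q_n = 2x q_{n-1} - q_{n-2}$ as the $A_n$ determinants (it does, since $T_n$ satisfies the Chebyshev recurrence) and then just match the two initial values $q_0, q_1$ obtained from the $B_2$, $C_2$ (or $B_1$, $C_1$) Cartan matrices against $2T_0 = 2$, $2T_1 = 2x$. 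For $D_n$ the diagram is a path of length $n-2$ with a fork of two extra vertices at one end; expanding the determinant over the two fork vertices expresses $q_n$ in terms of the $A$-type determinants, and the fork contributes the factor that produces the leading $4x$, giving $q_n = 4x\, T_{n-1}$ after the same kind of Chebyshev bookkeeping.

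The main obstacle — really the only delicate point — is bookkeeping the low-index initial conditions and the off-diagonal normalization consistently across the three families: the Cartan matrix is not symmetric, so one must be careful that $\det(xI - C)$ really is governed by the \emph{symmetrized} tridiagonal recurrence (it is, because $C$ and $DCD^{-1}$ have the same characteristic polynomial for the symmetrizing diagonal $D$), and that the double/triple bonds enter only through the product of the two mirrored off-diagonal entries. Once that reduction to symmetric tridiagonal (plus a rank-one fork modification in the $D_n$ case) is in place, everything reduces to the single recurrence $y_n = 2x y_{n-1} - y_{n-2}$ together with the observation that $U_n$, $2T_n$, and $4xT_{n-1}$ are precisely the solutions picked out by the three sets of initial data coming from the small-rank Cartan matrices. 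I would close by recording the explicit small cases ($n=1,2$) as a sanity check and noting that the stated formula $p_n(x) = q_n(x/2-1)$ then follows by the substitution already described.
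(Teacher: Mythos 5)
Your proposal is correct and follows essentially the same route as the paper: both reduce each family to the three-term recurrence $q_{n+1}=2xq_n-q_{n-1}$ by expanding $\det(2xI+A)$ along the unmodified path end of the (symmetrized) tridiagonal matrix, identify $U_n$, $2T_n$, $4xT_{n-1}$ by matching initial data, and obtain $p_n(x)=q_n(x/2-1)$ and $a_n(x)=q_n(x/2)$ by direct substitution using $A=2I-C$. (Your first $B_n$ expansion should read $q_n=2x\,d_{n-1}-2d_{n-2}$ with $d_k=U_k$, giving $U_n-U_{n-2}=2T_n$, but your ``cleaner'' alternative is exactly the paper's argument and is valid.)
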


\noindent \textit{Proof.} We give an outline of the proof. Note that

\begin{align*} q_n\left( \frac{x}{2}-1 \right) &={\rm det}\, \left( 2 \left( \frac{x-2}{2}\right)I_n +A \right)  \\
& = {\rm det}\, \left( xI_n   -2I_n +A \right) \\
& ={\rm det}\, \left( xI_n  -2I_n +2I_n-C \right) \\
&= {\rm det}\, \left( xI_n  -C \right) =p_n(x) \ . \end{align*}

Furthermore,  the matrix $A$ for classical Lie algebras has the form
\bd \label{cartan} A= \begin{pmatrix} 0 & 1&  & & &  \cr
                     1 &0 &1& &&  \cr
                       & \ddots &\ddots \ddots & & & \cr
                       & & \ddots & \ddots & \ddots&  \cr
                        & & & 1&0& 1  \cr
                        & & & & 1 & D\end{pmatrix} \ ,  \ed

                        where $D$ is
       \bd (0),  \begin{pmatrix} 0 & 2  \cr 1 & 0 \end{pmatrix}, \begin{pmatrix} 0 & 1  \cr 2 & 0 \end{pmatrix},  \begin{pmatrix} 0 &1 & 1 \cr 1 &0 & 0 \cr 1 &0&0  \end{pmatrix}  \ed
       for the cases $A_n$, $B_n$, $C_n$, $D_n$ respectively.   The proof is by induction on $n$.  Suppose the result is proved for $n$ and $n-1$.  To get $q_{n+1}(x)$, expand the determinant of $2xI_{n+1}+A$ by the first two rows to obtain
       \bd q_{n+1}(x) = 2x q_n(x) -q_{n-1} \ . \ed
       This is the recurrence relation satisfied by the Chebyshev polynomials $T_n$ and $U_n$.   Sections ~\ref{can} and ~\ref{cbn} cover in detail  the  cases $A_n$ and $B_n$ respectively. In section ~\ref{cdn}  we  outline the case of $D_n$.

       Finally, note  that
       \bd a_n(x)=p_n(x+2)=q_n \left( \frac{x+2}{2} -1  \right) =q_n \left( \frac{x}{2} \right)  \ . \hspace{4 cm}  \blacksquare\ed

 It  is clear from what follows in this paper,  that Chebyshev polynomials  were designed  in order to fit nicely the theory of  complex simple Lie algebras. The precise definition and some basic properties  of  $U_n$ and $T_n$ will be given in sections  ~\ref{can} and ~\ref{cbn} respectively. In the case of exceptional Lie algebras, one can directly compute (preferably using   a symbolic manipulation package in the case of $E_n$) the characteristic polynomials $a_n(x)$ and $p_n(x)$ for each exceptional type. We present the calculations in section ~\ref{cex}.

\section{Cartan matrix of type $A_n$ } \label{can}

\subsection{Eigenvalues of the $A_n$ Cartan matrix}
Toeplitz matrices  have constant entries on each diagonal parallel to the main diagonal. Tridiagonal Toeplitz matrices are commonly the result of discretizing differential equations.

The eigenvalues of the Toeplitz matrix

\be \label{top} \begin{pmatrix} b & a&  & & &  \cr
                     c &b &a & &&  \cr
                       & \ddots &\ddots \ddots & & & \cr
                       & & \ddots & \ddots & \ddots&  \cr
                        & & & c&b& a  \cr
                        & & & & c & b \end{pmatrix}  \ee
                        are given by
\be \label{topeigen}
\lambda_j=b+2a \sqrt{   \frac{c}{a}        } \cos{   \frac{j \pi}{n+1}     }  \ \ \ j=1, 2 \dots, n \ ,
\ee
see e.g. \cite[p. 59]{smith}.

The Cartan matrix of type $A_n$ is a tri-diagonal matrix of the form.
\be \label{cartan} C_{A_n}= \begin{pmatrix} 2 & -1&  & & &  \cr
                     -1 &2 &-1& &&  \cr
                       & \ddots &\ddots \ddots & & & \cr
                       & & \ddots & \ddots & \ddots&  \cr
                        & & & -1&2& -1  \cr
                        & & & & -1 & 2\end{pmatrix} \ .  \ee
It appears in the classification theory of simple Lie algebras over $\mathbf{C}$.

Taking  $a=c=-1$, $b=2$ in (\ref{topeigen})  we deduce that the eigenvalues of $A_n$ are given by
\be \label{eigen} \lambda_j=2 -2 \cos {  \frac{ j \pi}{n+1} } =4 \sin^2 {  \frac{ j \pi}{2(n+1)} }  \ \ \ j=1,2, \dots, n  \ .  \ee

Let $d_n$ be the determinant of $C_{A_n}$.  One  can compute it using expansion on the first row and induction.  We  obtain
$d_n=2 d_{n-1} - d_{n-2}$,  $d_1=2$, $d_2=3$. This is a simple linear recurrence with solution $d_n=n+1$.

 We conclude that

\bd \prod_{j=1}^n 4 \sin^2 {  \frac{ j \pi}{2(n+1)} } = n+1   \ . \ed

Equivalently

\be  \label{det}    2^{2n} \prod_{j=1}^n  \sin^2 {  \frac{ j \pi}{2(n+1)} } = n+1  \ .  \ \ \ \ \ \ \ \ \ \ \left( A_n \ \ \   {\rm sine \ \ \  formula} \right) \ee

We refer to this relation as the $A_n$ sine formula.

\subsection{Product of distances from $1$ to the other roots of unity}
\vskip 0.5 cm
\begin{center}
\ifx\JPicScale\undefined\def\JPicScale{1}\fi
\unitlength \JPicScale mm
\begin{picture}(90,90)(0,0)
\linethickness{0.5mm}
\put(90,49.75){\line(0,1){0.5}}
\multiput(89.99,50.75)(0.01,-0.5){1}{\line(0,-1){0.5}}
\multiput(89.98,51.25)(0.01,-0.5){1}{\line(0,-1){0.5}}
\multiput(89.96,51.74)(0.02,-0.5){1}{\line(0,-1){0.5}}
\multiput(89.94,52.24)(0.02,-0.5){1}{\line(0,-1){0.5}}
\multiput(89.91,52.74)(0.03,-0.5){1}{\line(0,-1){0.5}}
\multiput(89.87,53.24)(0.04,-0.5){1}{\line(0,-1){0.5}}
\multiput(89.83,53.73)(0.04,-0.5){1}{\line(0,-1){0.5}}
\multiput(89.78,54.23)(0.05,-0.5){1}{\line(0,-1){0.5}}
\multiput(89.72,54.73)(0.06,-0.5){1}{\line(0,-1){0.5}}
\multiput(89.66,55.22)(0.06,-0.49){1}{\line(0,-1){0.49}}
\multiput(89.59,55.72)(0.07,-0.49){1}{\line(0,-1){0.49}}
\multiput(89.52,56.21)(0.07,-0.49){1}{\line(0,-1){0.49}}
\multiput(89.43,56.7)(0.08,-0.49){1}{\line(0,-1){0.49}}
\multiput(89.35,57.19)(0.09,-0.49){1}{\line(0,-1){0.49}}
\multiput(89.26,57.68)(0.09,-0.49){1}{\line(0,-1){0.49}}
\multiput(89.16,58.17)(0.1,-0.49){1}{\line(0,-1){0.49}}
\multiput(89.05,58.66)(0.1,-0.49){1}{\line(0,-1){0.49}}
\multiput(88.94,59.14)(0.11,-0.49){1}{\line(0,-1){0.49}}
\multiput(88.82,59.63)(0.12,-0.48){1}{\line(0,-1){0.48}}
\multiput(88.7,60.11)(0.12,-0.48){1}{\line(0,-1){0.48}}
\multiput(88.57,60.59)(0.13,-0.48){1}{\line(0,-1){0.48}}
\multiput(88.44,61.07)(0.14,-0.48){1}{\line(0,-1){0.48}}
\multiput(88.3,61.55)(0.14,-0.48){1}{\line(0,-1){0.48}}
\multiput(88.15,62.03)(0.15,-0.48){1}{\line(0,-1){0.48}}
\multiput(88,62.5)(0.15,-0.47){1}{\line(0,-1){0.47}}
\multiput(87.84,62.98)(0.16,-0.47){1}{\line(0,-1){0.47}}
\multiput(87.67,63.45)(0.16,-0.47){1}{\line(0,-1){0.47}}
\multiput(87.5,63.91)(0.17,-0.47){1}{\line(0,-1){0.47}}
\multiput(87.33,64.38)(0.18,-0.47){1}{\line(0,-1){0.47}}
\multiput(87.14,64.85)(0.09,-0.23){2}{\line(0,-1){0.23}}
\multiput(86.96,65.31)(0.09,-0.23){2}{\line(0,-1){0.23}}
\multiput(86.76,65.77)(0.1,-0.23){2}{\line(0,-1){0.23}}
\multiput(86.56,66.22)(0.1,-0.23){2}{\line(0,-1){0.23}}
\multiput(86.36,66.68)(0.1,-0.23){2}{\line(0,-1){0.23}}
\multiput(86.15,67.13)(0.11,-0.23){2}{\line(0,-1){0.23}}
\multiput(85.93,67.58)(0.11,-0.22){2}{\line(0,-1){0.22}}
\multiput(85.71,68.03)(0.11,-0.22){2}{\line(0,-1){0.22}}
\multiput(85.48,68.47)(0.11,-0.22){2}{\line(0,-1){0.22}}
\multiput(85.25,68.91)(0.12,-0.22){2}{\line(0,-1){0.22}}
\multiput(85.01,69.35)(0.12,-0.22){2}{\line(0,-1){0.22}}
\multiput(84.77,69.78)(0.12,-0.22){2}{\line(0,-1){0.22}}
\multiput(84.52,70.22)(0.12,-0.22){2}{\line(0,-1){0.22}}
\multiput(84.26,70.64)(0.13,-0.21){2}{\line(0,-1){0.21}}
\multiput(84,71.07)(0.13,-0.21){2}{\line(0,-1){0.21}}
\multiput(83.74,71.49)(0.13,-0.21){2}{\line(0,-1){0.21}}
\multiput(83.47,71.91)(0.14,-0.21){2}{\line(0,-1){0.21}}
\multiput(83.19,72.33)(0.14,-0.21){2}{\line(0,-1){0.21}}
\multiput(82.91,72.74)(0.14,-0.21){2}{\line(0,-1){0.21}}
\multiput(82.62,73.15)(0.14,-0.2){2}{\line(0,-1){0.2}}
\multiput(82.33,73.55)(0.15,-0.2){2}{\line(0,-1){0.2}}
\multiput(82.04,73.95)(0.15,-0.2){2}{\line(0,-1){0.2}}
\multiput(81.73,74.35)(0.1,-0.13){3}{\line(0,-1){0.13}}
\multiput(81.43,74.74)(0.1,-0.13){3}{\line(0,-1){0.13}}
\multiput(81.12,75.13)(0.1,-0.13){3}{\line(0,-1){0.13}}
\multiput(80.8,75.52)(0.11,-0.13){3}{\line(0,-1){0.13}}
\multiput(80.48,75.9)(0.11,-0.13){3}{\line(0,-1){0.13}}
\multiput(80.16,76.28)(0.11,-0.13){3}{\line(0,-1){0.13}}
\multiput(79.83,76.65)(0.11,-0.12){3}{\line(0,-1){0.12}}
\multiput(79.49,77.02)(0.11,-0.12){3}{\line(0,-1){0.12}}
\multiput(79.15,77.39)(0.11,-0.12){3}{\line(0,-1){0.12}}
\multiput(78.81,77.75)(0.11,-0.12){3}{\line(0,-1){0.12}}
\multiput(78.46,78.11)(0.12,-0.12){3}{\line(0,-1){0.12}}
\multiput(78.11,78.46)(0.12,-0.12){3}{\line(1,0){0.12}}
\multiput(77.75,78.81)(0.12,-0.12){3}{\line(1,0){0.12}}
\multiput(77.39,79.15)(0.12,-0.11){3}{\line(1,0){0.12}}
\multiput(77.02,79.49)(0.12,-0.11){3}{\line(1,0){0.12}}
\multiput(76.65,79.83)(0.12,-0.11){3}{\line(1,0){0.12}}
\multiput(76.28,80.16)(0.12,-0.11){3}{\line(1,0){0.12}}
\multiput(75.9,80.48)(0.13,-0.11){3}{\line(1,0){0.13}}
\multiput(75.52,80.8)(0.13,-0.11){3}{\line(1,0){0.13}}
\multiput(75.13,81.12)(0.13,-0.11){3}{\line(1,0){0.13}}
\multiput(74.74,81.43)(0.13,-0.1){3}{\line(1,0){0.13}}
\multiput(74.35,81.73)(0.13,-0.1){3}{\line(1,0){0.13}}
\multiput(73.95,82.04)(0.13,-0.1){3}{\line(1,0){0.13}}
\multiput(73.55,82.33)(0.2,-0.15){2}{\line(1,0){0.2}}
\multiput(73.15,82.62)(0.2,-0.15){2}{\line(1,0){0.2}}
\multiput(72.74,82.91)(0.2,-0.14){2}{\line(1,0){0.2}}
\multiput(72.33,83.19)(0.21,-0.14){2}{\line(1,0){0.21}}
\multiput(71.91,83.47)(0.21,-0.14){2}{\line(1,0){0.21}}
\multiput(71.49,83.74)(0.21,-0.14){2}{\line(1,0){0.21}}
\multiput(71.07,84)(0.21,-0.13){2}{\line(1,0){0.21}}
\multiput(70.64,84.26)(0.21,-0.13){2}{\line(1,0){0.21}}
\multiput(70.22,84.52)(0.21,-0.13){2}{\line(1,0){0.21}}
\multiput(69.78,84.77)(0.22,-0.12){2}{\line(1,0){0.22}}
\multiput(69.35,85.01)(0.22,-0.12){2}{\line(1,0){0.22}}
\multiput(68.91,85.25)(0.22,-0.12){2}{\line(1,0){0.22}}
\multiput(68.47,85.48)(0.22,-0.12){2}{\line(1,0){0.22}}
\multiput(68.03,85.71)(0.22,-0.11){2}{\line(1,0){0.22}}
\multiput(67.58,85.93)(0.22,-0.11){2}{\line(1,0){0.22}}
\multiput(67.13,86.15)(0.22,-0.11){2}{\line(1,0){0.22}}
\multiput(66.68,86.36)(0.23,-0.11){2}{\line(1,0){0.23}}
\multiput(66.22,86.56)(0.23,-0.1){2}{\line(1,0){0.23}}
\multiput(65.77,86.76)(0.23,-0.1){2}{\line(1,0){0.23}}
\multiput(65.31,86.96)(0.23,-0.1){2}{\line(1,0){0.23}}
\multiput(64.85,87.14)(0.23,-0.09){2}{\line(1,0){0.23}}
\multiput(64.38,87.33)(0.23,-0.09){2}{\line(1,0){0.23}}
\multiput(63.91,87.5)(0.47,-0.18){1}{\line(1,0){0.47}}
\multiput(63.45,87.67)(0.47,-0.17){1}{\line(1,0){0.47}}
\multiput(62.98,87.84)(0.47,-0.16){1}{\line(1,0){0.47}}
\multiput(62.5,88)(0.47,-0.16){1}{\line(1,0){0.47}}
\multiput(62.03,88.15)(0.47,-0.15){1}{\line(1,0){0.47}}
\multiput(61.55,88.3)(0.48,-0.15){1}{\line(1,0){0.48}}
\multiput(61.07,88.44)(0.48,-0.14){1}{\line(1,0){0.48}}
\multiput(60.59,88.57)(0.48,-0.14){1}{\line(1,0){0.48}}
\multiput(60.11,88.7)(0.48,-0.13){1}{\line(1,0){0.48}}
\multiput(59.63,88.82)(0.48,-0.12){1}{\line(1,0){0.48}}
\multiput(59.14,88.94)(0.48,-0.12){1}{\line(1,0){0.48}}
\multiput(58.66,89.05)(0.49,-0.11){1}{\line(1,0){0.49}}
\multiput(58.17,89.16)(0.49,-0.1){1}{\line(1,0){0.49}}
\multiput(57.68,89.26)(0.49,-0.1){1}{\line(1,0){0.49}}
\multiput(57.19,89.35)(0.49,-0.09){1}{\line(1,0){0.49}}
\multiput(56.7,89.43)(0.49,-0.09){1}{\line(1,0){0.49}}
\multiput(56.21,89.52)(0.49,-0.08){1}{\line(1,0){0.49}}
\multiput(55.72,89.59)(0.49,-0.07){1}{\line(1,0){0.49}}
\multiput(55.22,89.66)(0.49,-0.07){1}{\line(1,0){0.49}}
\multiput(54.73,89.72)(0.49,-0.06){1}{\line(1,0){0.49}}
\multiput(54.23,89.78)(0.5,-0.06){1}{\line(1,0){0.5}}
\multiput(53.73,89.83)(0.5,-0.05){1}{\line(1,0){0.5}}
\multiput(53.24,89.87)(0.5,-0.04){1}{\line(1,0){0.5}}
\multiput(52.74,89.91)(0.5,-0.04){1}{\line(1,0){0.5}}
\multiput(52.24,89.94)(0.5,-0.03){1}{\line(1,0){0.5}}
\multiput(51.74,89.96)(0.5,-0.02){1}{\line(1,0){0.5}}
\multiput(51.25,89.98)(0.5,-0.02){1}{\line(1,0){0.5}}
\multiput(50.75,89.99)(0.5,-0.01){1}{\line(1,0){0.5}}
\multiput(50.25,90)(0.5,-0.01){1}{\line(1,0){0.5}}
\put(49.75,90){\line(1,0){0.5}}
\multiput(49.25,89.99)(0.5,0.01){1}{\line(1,0){0.5}}
\multiput(48.75,89.98)(0.5,0.01){1}{\line(1,0){0.5}}
\multiput(48.26,89.96)(0.5,0.02){1}{\line(1,0){0.5}}
\multiput(47.76,89.94)(0.5,0.02){1}{\line(1,0){0.5}}
\multiput(47.26,89.91)(0.5,0.03){1}{\line(1,0){0.5}}
\multiput(46.76,89.87)(0.5,0.04){1}{\line(1,0){0.5}}
\multiput(46.27,89.83)(0.5,0.04){1}{\line(1,0){0.5}}
\multiput(45.77,89.78)(0.5,0.05){1}{\line(1,0){0.5}}
\multiput(45.27,89.72)(0.5,0.06){1}{\line(1,0){0.5}}
\multiput(44.78,89.66)(0.49,0.06){1}{\line(1,0){0.49}}
\multiput(44.28,89.59)(0.49,0.07){1}{\line(1,0){0.49}}
\multiput(43.79,89.52)(0.49,0.07){1}{\line(1,0){0.49}}
\multiput(43.3,89.43)(0.49,0.08){1}{\line(1,0){0.49}}
\multiput(42.81,89.35)(0.49,0.09){1}{\line(1,0){0.49}}
\multiput(42.32,89.26)(0.49,0.09){1}{\line(1,0){0.49}}
\multiput(41.83,89.16)(0.49,0.1){1}{\line(1,0){0.49}}
\multiput(41.34,89.05)(0.49,0.1){1}{\line(1,0){0.49}}
\multiput(40.86,88.94)(0.49,0.11){1}{\line(1,0){0.49}}
\multiput(40.37,88.82)(0.48,0.12){1}{\line(1,0){0.48}}
\multiput(39.89,88.7)(0.48,0.12){1}{\line(1,0){0.48}}
\multiput(39.41,88.57)(0.48,0.13){1}{\line(1,0){0.48}}
\multiput(38.93,88.44)(0.48,0.14){1}{\line(1,0){0.48}}
\multiput(38.45,88.3)(0.48,0.14){1}{\line(1,0){0.48}}
\multiput(37.97,88.15)(0.48,0.15){1}{\line(1,0){0.48}}
\multiput(37.5,88)(0.47,0.15){1}{\line(1,0){0.47}}
\multiput(37.02,87.84)(0.47,0.16){1}{\line(1,0){0.47}}
\multiput(36.55,87.67)(0.47,0.16){1}{\line(1,0){0.47}}
\multiput(36.09,87.5)(0.47,0.17){1}{\line(1,0){0.47}}
\multiput(35.62,87.33)(0.47,0.18){1}{\line(1,0){0.47}}
\multiput(35.15,87.14)(0.23,0.09){2}{\line(1,0){0.23}}
\multiput(34.69,86.96)(0.23,0.09){2}{\line(1,0){0.23}}
\multiput(34.23,86.76)(0.23,0.1){2}{\line(1,0){0.23}}
\multiput(33.78,86.56)(0.23,0.1){2}{\line(1,0){0.23}}
\multiput(33.32,86.36)(0.23,0.1){2}{\line(1,0){0.23}}
\multiput(32.87,86.15)(0.23,0.11){2}{\line(1,0){0.23}}
\multiput(32.42,85.93)(0.22,0.11){2}{\line(1,0){0.22}}
\multiput(31.97,85.71)(0.22,0.11){2}{\line(1,0){0.22}}
\multiput(31.53,85.48)(0.22,0.11){2}{\line(1,0){0.22}}
\multiput(31.09,85.25)(0.22,0.12){2}{\line(1,0){0.22}}
\multiput(30.65,85.01)(0.22,0.12){2}{\line(1,0){0.22}}
\multiput(30.22,84.77)(0.22,0.12){2}{\line(1,0){0.22}}
\multiput(29.78,84.52)(0.22,0.12){2}{\line(1,0){0.22}}
\multiput(29.36,84.26)(0.21,0.13){2}{\line(1,0){0.21}}
\multiput(28.93,84)(0.21,0.13){2}{\line(1,0){0.21}}
\multiput(28.51,83.74)(0.21,0.13){2}{\line(1,0){0.21}}
\multiput(28.09,83.47)(0.21,0.14){2}{\line(1,0){0.21}}
\multiput(27.67,83.19)(0.21,0.14){2}{\line(1,0){0.21}}
\multiput(27.26,82.91)(0.21,0.14){2}{\line(1,0){0.21}}
\multiput(26.85,82.62)(0.2,0.14){2}{\line(1,0){0.2}}
\multiput(26.45,82.33)(0.2,0.15){2}{\line(1,0){0.2}}
\multiput(26.05,82.04)(0.2,0.15){2}{\line(1,0){0.2}}
\multiput(25.65,81.73)(0.13,0.1){3}{\line(1,0){0.13}}
\multiput(25.26,81.43)(0.13,0.1){3}{\line(1,0){0.13}}
\multiput(24.87,81.12)(0.13,0.1){3}{\line(1,0){0.13}}
\multiput(24.48,80.8)(0.13,0.11){3}{\line(1,0){0.13}}
\multiput(24.1,80.48)(0.13,0.11){3}{\line(1,0){0.13}}
\multiput(23.72,80.16)(0.13,0.11){3}{\line(1,0){0.13}}
\multiput(23.35,79.83)(0.12,0.11){3}{\line(1,0){0.12}}
\multiput(22.98,79.49)(0.12,0.11){3}{\line(1,0){0.12}}
\multiput(22.61,79.15)(0.12,0.11){3}{\line(1,0){0.12}}
\multiput(22.25,78.81)(0.12,0.11){3}{\line(1,0){0.12}}
\multiput(21.89,78.46)(0.12,0.12){3}{\line(1,0){0.12}}
\multiput(21.54,78.11)(0.12,0.12){3}{\line(1,0){0.12}}
\multiput(21.19,77.75)(0.12,0.12){3}{\line(0,1){0.12}}
\multiput(20.85,77.39)(0.11,0.12){3}{\line(0,1){0.12}}
\multiput(20.51,77.02)(0.11,0.12){3}{\line(0,1){0.12}}
\multiput(20.17,76.65)(0.11,0.12){3}{\line(0,1){0.12}}
\multiput(19.84,76.28)(0.11,0.12){3}{\line(0,1){0.12}}
\multiput(19.52,75.9)(0.11,0.13){3}{\line(0,1){0.13}}
\multiput(19.2,75.52)(0.11,0.13){3}{\line(0,1){0.13}}
\multiput(18.88,75.13)(0.11,0.13){3}{\line(0,1){0.13}}
\multiput(18.57,74.74)(0.1,0.13){3}{\line(0,1){0.13}}
\multiput(18.27,74.35)(0.1,0.13){3}{\line(0,1){0.13}}
\multiput(17.96,73.95)(0.1,0.13){3}{\line(0,1){0.13}}
\multiput(17.67,73.55)(0.15,0.2){2}{\line(0,1){0.2}}
\multiput(17.38,73.15)(0.15,0.2){2}{\line(0,1){0.2}}
\multiput(17.09,72.74)(0.14,0.2){2}{\line(0,1){0.2}}
\multiput(16.81,72.33)(0.14,0.21){2}{\line(0,1){0.21}}
\multiput(16.53,71.91)(0.14,0.21){2}{\line(0,1){0.21}}
\multiput(16.26,71.49)(0.14,0.21){2}{\line(0,1){0.21}}
\multiput(16,71.07)(0.13,0.21){2}{\line(0,1){0.21}}
\multiput(15.74,70.64)(0.13,0.21){2}{\line(0,1){0.21}}
\multiput(15.48,70.22)(0.13,0.21){2}{\line(0,1){0.21}}
\multiput(15.23,69.78)(0.12,0.22){2}{\line(0,1){0.22}}
\multiput(14.99,69.35)(0.12,0.22){2}{\line(0,1){0.22}}
\multiput(14.75,68.91)(0.12,0.22){2}{\line(0,1){0.22}}
\multiput(14.52,68.47)(0.12,0.22){2}{\line(0,1){0.22}}
\multiput(14.29,68.03)(0.11,0.22){2}{\line(0,1){0.22}}
\multiput(14.07,67.58)(0.11,0.22){2}{\line(0,1){0.22}}
\multiput(13.85,67.13)(0.11,0.22){2}{\line(0,1){0.22}}
\multiput(13.64,66.68)(0.11,0.23){2}{\line(0,1){0.23}}
\multiput(13.44,66.22)(0.1,0.23){2}{\line(0,1){0.23}}
\multiput(13.24,65.77)(0.1,0.23){2}{\line(0,1){0.23}}
\multiput(13.04,65.31)(0.1,0.23){2}{\line(0,1){0.23}}
\multiput(12.86,64.85)(0.09,0.23){2}{\line(0,1){0.23}}
\multiput(12.67,64.38)(0.09,0.23){2}{\line(0,1){0.23}}
\multiput(12.5,63.91)(0.18,0.47){1}{\line(0,1){0.47}}
\multiput(12.33,63.45)(0.17,0.47){1}{\line(0,1){0.47}}
\multiput(12.16,62.98)(0.16,0.47){1}{\line(0,1){0.47}}
\multiput(12,62.5)(0.16,0.47){1}{\line(0,1){0.47}}
\multiput(11.85,62.03)(0.15,0.47){1}{\line(0,1){0.47}}
\multiput(11.7,61.55)(0.15,0.48){1}{\line(0,1){0.48}}
\multiput(11.56,61.07)(0.14,0.48){1}{\line(0,1){0.48}}
\multiput(11.43,60.59)(0.14,0.48){1}{\line(0,1){0.48}}
\multiput(11.3,60.11)(0.13,0.48){1}{\line(0,1){0.48}}
\multiput(11.18,59.63)(0.12,0.48){1}{\line(0,1){0.48}}
\multiput(11.06,59.14)(0.12,0.48){1}{\line(0,1){0.48}}
\multiput(10.95,58.66)(0.11,0.49){1}{\line(0,1){0.49}}
\multiput(10.84,58.17)(0.1,0.49){1}{\line(0,1){0.49}}
\multiput(10.74,57.68)(0.1,0.49){1}{\line(0,1){0.49}}
\multiput(10.65,57.19)(0.09,0.49){1}{\line(0,1){0.49}}
\multiput(10.57,56.7)(0.09,0.49){1}{\line(0,1){0.49}}
\multiput(10.48,56.21)(0.08,0.49){1}{\line(0,1){0.49}}
\multiput(10.41,55.72)(0.07,0.49){1}{\line(0,1){0.49}}
\multiput(10.34,55.22)(0.07,0.49){1}{\line(0,1){0.49}}
\multiput(10.28,54.73)(0.06,0.49){1}{\line(0,1){0.49}}
\multiput(10.22,54.23)(0.06,0.5){1}{\line(0,1){0.5}}
\multiput(10.17,53.73)(0.05,0.5){1}{\line(0,1){0.5}}
\multiput(10.13,53.24)(0.04,0.5){1}{\line(0,1){0.5}}
\multiput(10.09,52.74)(0.04,0.5){1}{\line(0,1){0.5}}
\multiput(10.06,52.24)(0.03,0.5){1}{\line(0,1){0.5}}
\multiput(10.04,51.74)(0.02,0.5){1}{\line(0,1){0.5}}
\multiput(10.02,51.25)(0.02,0.5){1}{\line(0,1){0.5}}
\multiput(10.01,50.75)(0.01,0.5){1}{\line(0,1){0.5}}
\multiput(10,50.25)(0.01,0.5){1}{\line(0,1){0.5}}
\put(10,49.75){\line(0,1){0.5}}
\multiput(10,49.75)(0.01,-0.5){1}{\line(0,-1){0.5}}
\multiput(10.01,49.25)(0.01,-0.5){1}{\line(0,-1){0.5}}
\multiput(10.02,48.75)(0.02,-0.5){1}{\line(0,-1){0.5}}
\multiput(10.04,48.26)(0.02,-0.5){1}{\line(0,-1){0.5}}
\multiput(10.06,47.76)(0.03,-0.5){1}{\line(0,-1){0.5}}
\multiput(10.09,47.26)(0.04,-0.5){1}{\line(0,-1){0.5}}
\multiput(10.13,46.76)(0.04,-0.5){1}{\line(0,-1){0.5}}
\multiput(10.17,46.27)(0.05,-0.5){1}{\line(0,-1){0.5}}
\multiput(10.22,45.77)(0.06,-0.5){1}{\line(0,-1){0.5}}
\multiput(10.28,45.27)(0.06,-0.49){1}{\line(0,-1){0.49}}
\multiput(10.34,44.78)(0.07,-0.49){1}{\line(0,-1){0.49}}
\multiput(10.41,44.28)(0.07,-0.49){1}{\line(0,-1){0.49}}
\multiput(10.48,43.79)(0.08,-0.49){1}{\line(0,-1){0.49}}
\multiput(10.57,43.3)(0.09,-0.49){1}{\line(0,-1){0.49}}
\multiput(10.65,42.81)(0.09,-0.49){1}{\line(0,-1){0.49}}
\multiput(10.74,42.32)(0.1,-0.49){1}{\line(0,-1){0.49}}
\multiput(10.84,41.83)(0.1,-0.49){1}{\line(0,-1){0.49}}
\multiput(10.95,41.34)(0.11,-0.49){1}{\line(0,-1){0.49}}
\multiput(11.06,40.86)(0.12,-0.48){1}{\line(0,-1){0.48}}
\multiput(11.18,40.37)(0.12,-0.48){1}{\line(0,-1){0.48}}
\multiput(11.3,39.89)(0.13,-0.48){1}{\line(0,-1){0.48}}
\multiput(11.43,39.41)(0.14,-0.48){1}{\line(0,-1){0.48}}
\multiput(11.56,38.93)(0.14,-0.48){1}{\line(0,-1){0.48}}
\multiput(11.7,38.45)(0.15,-0.48){1}{\line(0,-1){0.48}}
\multiput(11.85,37.97)(0.15,-0.47){1}{\line(0,-1){0.47}}
\multiput(12,37.5)(0.16,-0.47){1}{\line(0,-1){0.47}}
\multiput(12.16,37.02)(0.16,-0.47){1}{\line(0,-1){0.47}}
\multiput(12.33,36.55)(0.17,-0.47){1}{\line(0,-1){0.47}}
\multiput(12.5,36.09)(0.18,-0.47){1}{\line(0,-1){0.47}}
\multiput(12.67,35.62)(0.09,-0.23){2}{\line(0,-1){0.23}}
\multiput(12.86,35.15)(0.09,-0.23){2}{\line(0,-1){0.23}}
\multiput(13.04,34.69)(0.1,-0.23){2}{\line(0,-1){0.23}}
\multiput(13.24,34.23)(0.1,-0.23){2}{\line(0,-1){0.23}}
\multiput(13.44,33.78)(0.1,-0.23){2}{\line(0,-1){0.23}}
\multiput(13.64,33.32)(0.11,-0.23){2}{\line(0,-1){0.23}}
\multiput(13.85,32.87)(0.11,-0.22){2}{\line(0,-1){0.22}}
\multiput(14.07,32.42)(0.11,-0.22){2}{\line(0,-1){0.22}}
\multiput(14.29,31.97)(0.11,-0.22){2}{\line(0,-1){0.22}}
\multiput(14.52,31.53)(0.12,-0.22){2}{\line(0,-1){0.22}}
\multiput(14.75,31.09)(0.12,-0.22){2}{\line(0,-1){0.22}}
\multiput(14.99,30.65)(0.12,-0.22){2}{\line(0,-1){0.22}}
\multiput(15.23,30.22)(0.12,-0.22){2}{\line(0,-1){0.22}}
\multiput(15.48,29.78)(0.13,-0.21){2}{\line(0,-1){0.21}}
\multiput(15.74,29.36)(0.13,-0.21){2}{\line(0,-1){0.21}}
\multiput(16,28.93)(0.13,-0.21){2}{\line(0,-1){0.21}}
\multiput(16.26,28.51)(0.14,-0.21){2}{\line(0,-1){0.21}}
\multiput(16.53,28.09)(0.14,-0.21){2}{\line(0,-1){0.21}}
\multiput(16.81,27.67)(0.14,-0.21){2}{\line(0,-1){0.21}}
\multiput(17.09,27.26)(0.14,-0.2){2}{\line(0,-1){0.2}}
\multiput(17.38,26.85)(0.15,-0.2){2}{\line(0,-1){0.2}}
\multiput(17.67,26.45)(0.15,-0.2){2}{\line(0,-1){0.2}}
\multiput(17.96,26.05)(0.1,-0.13){3}{\line(0,-1){0.13}}
\multiput(18.27,25.65)(0.1,-0.13){3}{\line(0,-1){0.13}}
\multiput(18.57,25.26)(0.1,-0.13){3}{\line(0,-1){0.13}}
\multiput(18.88,24.87)(0.11,-0.13){3}{\line(0,-1){0.13}}
\multiput(19.2,24.48)(0.11,-0.13){3}{\line(0,-1){0.13}}
\multiput(19.52,24.1)(0.11,-0.13){3}{\line(0,-1){0.13}}
\multiput(19.84,23.72)(0.11,-0.12){3}{\line(0,-1){0.12}}
\multiput(20.17,23.35)(0.11,-0.12){3}{\line(0,-1){0.12}}
\multiput(20.51,22.98)(0.11,-0.12){3}{\line(0,-1){0.12}}
\multiput(20.85,22.61)(0.11,-0.12){3}{\line(0,-1){0.12}}
\multiput(21.19,22.25)(0.12,-0.12){3}{\line(0,-1){0.12}}
\multiput(21.54,21.89)(0.12,-0.12){3}{\line(1,0){0.12}}
\multiput(21.89,21.54)(0.12,-0.12){3}{\line(1,0){0.12}}
\multiput(22.25,21.19)(0.12,-0.11){3}{\line(1,0){0.12}}
\multiput(22.61,20.85)(0.12,-0.11){3}{\line(1,0){0.12}}
\multiput(22.98,20.51)(0.12,-0.11){3}{\line(1,0){0.12}}
\multiput(23.35,20.17)(0.12,-0.11){3}{\line(1,0){0.12}}
\multiput(23.72,19.84)(0.13,-0.11){3}{\line(1,0){0.13}}
\multiput(24.1,19.52)(0.13,-0.11){3}{\line(1,0){0.13}}
\multiput(24.48,19.2)(0.13,-0.11){3}{\line(1,0){0.13}}
\multiput(24.87,18.88)(0.13,-0.1){3}{\line(1,0){0.13}}
\multiput(25.26,18.57)(0.13,-0.1){3}{\line(1,0){0.13}}
\multiput(25.65,18.27)(0.13,-0.1){3}{\line(1,0){0.13}}
\multiput(26.05,17.96)(0.2,-0.15){2}{\line(1,0){0.2}}
\multiput(26.45,17.67)(0.2,-0.15){2}{\line(1,0){0.2}}
\multiput(26.85,17.38)(0.2,-0.14){2}{\line(1,0){0.2}}
\multiput(27.26,17.09)(0.21,-0.14){2}{\line(1,0){0.21}}
\multiput(27.67,16.81)(0.21,-0.14){2}{\line(1,0){0.21}}
\multiput(28.09,16.53)(0.21,-0.14){2}{\line(1,0){0.21}}
\multiput(28.51,16.26)(0.21,-0.13){2}{\line(1,0){0.21}}
\multiput(28.93,16)(0.21,-0.13){2}{\line(1,0){0.21}}
\multiput(29.36,15.74)(0.21,-0.13){2}{\line(1,0){0.21}}
\multiput(29.78,15.48)(0.22,-0.12){2}{\line(1,0){0.22}}
\multiput(30.22,15.23)(0.22,-0.12){2}{\line(1,0){0.22}}
\multiput(30.65,14.99)(0.22,-0.12){2}{\line(1,0){0.22}}
\multiput(31.09,14.75)(0.22,-0.12){2}{\line(1,0){0.22}}
\multiput(31.53,14.52)(0.22,-0.11){2}{\line(1,0){0.22}}
\multiput(31.97,14.29)(0.22,-0.11){2}{\line(1,0){0.22}}
\multiput(32.42,14.07)(0.22,-0.11){2}{\line(1,0){0.22}}
\multiput(32.87,13.85)(0.23,-0.11){2}{\line(1,0){0.23}}
\multiput(33.32,13.64)(0.23,-0.1){2}{\line(1,0){0.23}}
\multiput(33.78,13.44)(0.23,-0.1){2}{\line(1,0){0.23}}
\multiput(34.23,13.24)(0.23,-0.1){2}{\line(1,0){0.23}}
\multiput(34.69,13.04)(0.23,-0.09){2}{\line(1,0){0.23}}
\multiput(35.15,12.86)(0.23,-0.09){2}{\line(1,0){0.23}}
\multiput(35.62,12.67)(0.47,-0.18){1}{\line(1,0){0.47}}
\multiput(36.09,12.5)(0.47,-0.17){1}{\line(1,0){0.47}}
\multiput(36.55,12.33)(0.47,-0.16){1}{\line(1,0){0.47}}
\multiput(37.02,12.16)(0.47,-0.16){1}{\line(1,0){0.47}}
\multiput(37.5,12)(0.47,-0.15){1}{\line(1,0){0.47}}
\multiput(37.97,11.85)(0.48,-0.15){1}{\line(1,0){0.48}}
\multiput(38.45,11.7)(0.48,-0.14){1}{\line(1,0){0.48}}
\multiput(38.93,11.56)(0.48,-0.14){1}{\line(1,0){0.48}}
\multiput(39.41,11.43)(0.48,-0.13){1}{\line(1,0){0.48}}
\multiput(39.89,11.3)(0.48,-0.12){1}{\line(1,0){0.48}}
\multiput(40.37,11.18)(0.48,-0.12){1}{\line(1,0){0.48}}
\multiput(40.86,11.06)(0.49,-0.11){1}{\line(1,0){0.49}}
\multiput(41.34,10.95)(0.49,-0.1){1}{\line(1,0){0.49}}
\multiput(41.83,10.84)(0.49,-0.1){1}{\line(1,0){0.49}}
\multiput(42.32,10.74)(0.49,-0.09){1}{\line(1,0){0.49}}
\multiput(42.81,10.65)(0.49,-0.09){1}{\line(1,0){0.49}}
\multiput(43.3,10.57)(0.49,-0.08){1}{\line(1,0){0.49}}
\multiput(43.79,10.48)(0.49,-0.07){1}{\line(1,0){0.49}}
\multiput(44.28,10.41)(0.49,-0.07){1}{\line(1,0){0.49}}
\multiput(44.78,10.34)(0.49,-0.06){1}{\line(1,0){0.49}}
\multiput(45.27,10.28)(0.5,-0.06){1}{\line(1,0){0.5}}
\multiput(45.77,10.22)(0.5,-0.05){1}{\line(1,0){0.5}}
\multiput(46.27,10.17)(0.5,-0.04){1}{\line(1,0){0.5}}
\multiput(46.76,10.13)(0.5,-0.04){1}{\line(1,0){0.5}}
\multiput(47.26,10.09)(0.5,-0.03){1}{\line(1,0){0.5}}
\multiput(47.76,10.06)(0.5,-0.02){1}{\line(1,0){0.5}}
\multiput(48.26,10.04)(0.5,-0.02){1}{\line(1,0){0.5}}
\multiput(48.75,10.02)(0.5,-0.01){1}{\line(1,0){0.5}}
\multiput(49.25,10.01)(0.5,-0.01){1}{\line(1,0){0.5}}
\put(49.75,10){\line(1,0){0.5}}
\multiput(50.25,10)(0.5,0.01){1}{\line(1,0){0.5}}
\multiput(50.75,10.01)(0.5,0.01){1}{\line(1,0){0.5}}
\multiput(51.25,10.02)(0.5,0.02){1}{\line(1,0){0.5}}
\multiput(51.74,10.04)(0.5,0.02){1}{\line(1,0){0.5}}
\multiput(52.24,10.06)(0.5,0.03){1}{\line(1,0){0.5}}
\multiput(52.74,10.09)(0.5,0.04){1}{\line(1,0){0.5}}
\multiput(53.24,10.13)(0.5,0.04){1}{\line(1,0){0.5}}
\multiput(53.73,10.17)(0.5,0.05){1}{\line(1,0){0.5}}
\multiput(54.23,10.22)(0.5,0.06){1}{\line(1,0){0.5}}
\multiput(54.73,10.28)(0.49,0.06){1}{\line(1,0){0.49}}
\multiput(55.22,10.34)(0.49,0.07){1}{\line(1,0){0.49}}
\multiput(55.72,10.41)(0.49,0.07){1}{\line(1,0){0.49}}
\multiput(56.21,10.48)(0.49,0.08){1}{\line(1,0){0.49}}
\multiput(56.7,10.57)(0.49,0.09){1}{\line(1,0){0.49}}
\multiput(57.19,10.65)(0.49,0.09){1}{\line(1,0){0.49}}
\multiput(57.68,10.74)(0.49,0.1){1}{\line(1,0){0.49}}
\multiput(58.17,10.84)(0.49,0.1){1}{\line(1,0){0.49}}
\multiput(58.66,10.95)(0.49,0.11){1}{\line(1,0){0.49}}
\multiput(59.14,11.06)(0.48,0.12){1}{\line(1,0){0.48}}
\multiput(59.63,11.18)(0.48,0.12){1}{\line(1,0){0.48}}
\multiput(60.11,11.3)(0.48,0.13){1}{\line(1,0){0.48}}
\multiput(60.59,11.43)(0.48,0.14){1}{\line(1,0){0.48}}
\multiput(61.07,11.56)(0.48,0.14){1}{\line(1,0){0.48}}
\multiput(61.55,11.7)(0.48,0.15){1}{\line(1,0){0.48}}
\multiput(62.03,11.85)(0.47,0.15){1}{\line(1,0){0.47}}
\multiput(62.5,12)(0.47,0.16){1}{\line(1,0){0.47}}
\multiput(62.98,12.16)(0.47,0.16){1}{\line(1,0){0.47}}
\multiput(63.45,12.33)(0.47,0.17){1}{\line(1,0){0.47}}
\multiput(63.91,12.5)(0.47,0.18){1}{\line(1,0){0.47}}
\multiput(64.38,12.67)(0.23,0.09){2}{\line(1,0){0.23}}
\multiput(64.85,12.86)(0.23,0.09){2}{\line(1,0){0.23}}
\multiput(65.31,13.04)(0.23,0.1){2}{\line(1,0){0.23}}
\multiput(65.77,13.24)(0.23,0.1){2}{\line(1,0){0.23}}
\multiput(66.22,13.44)(0.23,0.1){2}{\line(1,0){0.23}}
\multiput(66.68,13.64)(0.23,0.11){2}{\line(1,0){0.23}}
\multiput(67.13,13.85)(0.22,0.11){2}{\line(1,0){0.22}}
\multiput(67.58,14.07)(0.22,0.11){2}{\line(1,0){0.22}}
\multiput(68.03,14.29)(0.22,0.11){2}{\line(1,0){0.22}}
\multiput(68.47,14.52)(0.22,0.12){2}{\line(1,0){0.22}}
\multiput(68.91,14.75)(0.22,0.12){2}{\line(1,0){0.22}}
\multiput(69.35,14.99)(0.22,0.12){2}{\line(1,0){0.22}}
\multiput(69.78,15.23)(0.22,0.12){2}{\line(1,0){0.22}}
\multiput(70.22,15.48)(0.21,0.13){2}{\line(1,0){0.21}}
\multiput(70.64,15.74)(0.21,0.13){2}{\line(1,0){0.21}}
\multiput(71.07,16)(0.21,0.13){2}{\line(1,0){0.21}}
\multiput(71.49,16.26)(0.21,0.14){2}{\line(1,0){0.21}}
\multiput(71.91,16.53)(0.21,0.14){2}{\line(1,0){0.21}}
\multiput(72.33,16.81)(0.21,0.14){2}{\line(1,0){0.21}}
\multiput(72.74,17.09)(0.2,0.14){2}{\line(1,0){0.2}}
\multiput(73.15,17.38)(0.2,0.15){2}{\line(1,0){0.2}}
\multiput(73.55,17.67)(0.2,0.15){2}{\line(1,0){0.2}}
\multiput(73.95,17.96)(0.13,0.1){3}{\line(1,0){0.13}}
\multiput(74.35,18.27)(0.13,0.1){3}{\line(1,0){0.13}}
\multiput(74.74,18.57)(0.13,0.1){3}{\line(1,0){0.13}}
\multiput(75.13,18.88)(0.13,0.11){3}{\line(1,0){0.13}}
\multiput(75.52,19.2)(0.13,0.11){3}{\line(1,0){0.13}}
\multiput(75.9,19.52)(0.13,0.11){3}{\line(1,0){0.13}}
\multiput(76.28,19.84)(0.12,0.11){3}{\line(1,0){0.12}}
\multiput(76.65,20.17)(0.12,0.11){3}{\line(1,0){0.12}}
\multiput(77.02,20.51)(0.12,0.11){3}{\line(1,0){0.12}}
\multiput(77.39,20.85)(0.12,0.11){3}{\line(1,0){0.12}}
\multiput(77.75,21.19)(0.12,0.12){3}{\line(1,0){0.12}}
\multiput(78.11,21.54)(0.12,0.12){3}{\line(1,0){0.12}}
\multiput(78.46,21.89)(0.12,0.12){3}{\line(0,1){0.12}}
\multiput(78.81,22.25)(0.11,0.12){3}{\line(0,1){0.12}}
\multiput(79.15,22.61)(0.11,0.12){3}{\line(0,1){0.12}}
\multiput(79.49,22.98)(0.11,0.12){3}{\line(0,1){0.12}}
\multiput(79.83,23.35)(0.11,0.12){3}{\line(0,1){0.12}}
\multiput(80.16,23.72)(0.11,0.13){3}{\line(0,1){0.13}}
\multiput(80.48,24.1)(0.11,0.13){3}{\line(0,1){0.13}}
\multiput(80.8,24.48)(0.11,0.13){3}{\line(0,1){0.13}}
\multiput(81.12,24.87)(0.1,0.13){3}{\line(0,1){0.13}}
\multiput(81.43,25.26)(0.1,0.13){3}{\line(0,1){0.13}}
\multiput(81.73,25.65)(0.1,0.13){3}{\line(0,1){0.13}}
\multiput(82.04,26.05)(0.15,0.2){2}{\line(0,1){0.2}}
\multiput(82.33,26.45)(0.15,0.2){2}{\line(0,1){0.2}}
\multiput(82.62,26.85)(0.14,0.2){2}{\line(0,1){0.2}}
\multiput(82.91,27.26)(0.14,0.21){2}{\line(0,1){0.21}}
\multiput(83.19,27.67)(0.14,0.21){2}{\line(0,1){0.21}}
\multiput(83.47,28.09)(0.14,0.21){2}{\line(0,1){0.21}}
\multiput(83.74,28.51)(0.13,0.21){2}{\line(0,1){0.21}}
\multiput(84,28.93)(0.13,0.21){2}{\line(0,1){0.21}}
\multiput(84.26,29.36)(0.13,0.21){2}{\line(0,1){0.21}}
\multiput(84.52,29.78)(0.12,0.22){2}{\line(0,1){0.22}}
\multiput(84.77,30.22)(0.12,0.22){2}{\line(0,1){0.22}}
\multiput(85.01,30.65)(0.12,0.22){2}{\line(0,1){0.22}}
\multiput(85.25,31.09)(0.12,0.22){2}{\line(0,1){0.22}}
\multiput(85.48,31.53)(0.11,0.22){2}{\line(0,1){0.22}}
\multiput(85.71,31.97)(0.11,0.22){2}{\line(0,1){0.22}}
\multiput(85.93,32.42)(0.11,0.22){2}{\line(0,1){0.22}}
\multiput(86.15,32.87)(0.11,0.23){2}{\line(0,1){0.23}}
\multiput(86.36,33.32)(0.1,0.23){2}{\line(0,1){0.23}}
\multiput(86.56,33.78)(0.1,0.23){2}{\line(0,1){0.23}}
\multiput(86.76,34.23)(0.1,0.23){2}{\line(0,1){0.23}}
\multiput(86.96,34.69)(0.09,0.23){2}{\line(0,1){0.23}}
\multiput(87.14,35.15)(0.09,0.23){2}{\line(0,1){0.23}}
\multiput(87.33,35.62)(0.18,0.47){1}{\line(0,1){0.47}}
\multiput(87.5,36.09)(0.17,0.47){1}{\line(0,1){0.47}}
\multiput(87.67,36.55)(0.16,0.47){1}{\line(0,1){0.47}}
\multiput(87.84,37.02)(0.16,0.47){1}{\line(0,1){0.47}}
\multiput(88,37.5)(0.15,0.47){1}{\line(0,1){0.47}}
\multiput(88.15,37.97)(0.15,0.48){1}{\line(0,1){0.48}}
\multiput(88.3,38.45)(0.14,0.48){1}{\line(0,1){0.48}}
\multiput(88.44,38.93)(0.14,0.48){1}{\line(0,1){0.48}}
\multiput(88.57,39.41)(0.13,0.48){1}{\line(0,1){0.48}}
\multiput(88.7,39.89)(0.12,0.48){1}{\line(0,1){0.48}}
\multiput(88.82,40.37)(0.12,0.48){1}{\line(0,1){0.48}}
\multiput(88.94,40.86)(0.11,0.49){1}{\line(0,1){0.49}}
\multiput(89.05,41.34)(0.1,0.49){1}{\line(0,1){0.49}}
\multiput(89.16,41.83)(0.1,0.49){1}{\line(0,1){0.49}}
\multiput(89.26,42.32)(0.09,0.49){1}{\line(0,1){0.49}}
\multiput(89.35,42.81)(0.09,0.49){1}{\line(0,1){0.49}}
\multiput(89.43,43.3)(0.08,0.49){1}{\line(0,1){0.49}}
\multiput(89.52,43.79)(0.07,0.49){1}{\line(0,1){0.49}}
\multiput(89.59,44.28)(0.07,0.49){1}{\line(0,1){0.49}}
\multiput(89.66,44.78)(0.06,0.49){1}{\line(0,1){0.49}}
\multiput(89.72,45.27)(0.06,0.5){1}{\line(0,1){0.5}}
\multiput(89.78,45.77)(0.05,0.5){1}{\line(0,1){0.5}}
\multiput(89.83,46.27)(0.04,0.5){1}{\line(0,1){0.5}}
\multiput(89.87,46.76)(0.04,0.5){1}{\line(0,1){0.5}}
\multiput(89.91,47.26)(0.03,0.5){1}{\line(0,1){0.5}}
\multiput(89.94,47.76)(0.02,0.5){1}{\line(0,1){0.5}}
\multiput(89.96,48.26)(0.02,0.5){1}{\line(0,1){0.5}}
\multiput(89.98,48.75)(0.01,0.5){1}{\line(0,1){0.5}}
\multiput(89.99,49.25)(0.01,0.5){1}{\line(0,1){0.5}}

\linethickness{0.3mm}
\multiput(50,90)(0.12,-0.12){333}{\line(1,0){0.12}}
\linethickness{0.3mm}

\linethickness{0.3mm}
\put(10,50){\line(1,0){80}}
\linethickness{0.3mm}
\multiput(50,10)(0.12,0.12){333}{\line(1,0){0.12}}
\linethickness{0.3mm}
\multiput(75,80)(0.12,-0.24){125}{\line(0,-1){0.24}}
\linethickness{0.3mm}
\multiput(20,75)(0.34,-0.12){208}{\line(1,0){0.34}}
\linethickness{0.3mm}
\multiput(20,25)(0.34,0.12){208}{\line(1,0){0.34}}
\linethickness{0.3mm}
\multiput(75,20)(0.12,0.24){125}{\line(0,1){0.24}}
\end{picture}
\end{center}

The following problem is well-known. Consider a regular polygon inscribed in the unit circle with one vertex at the point $(1,0)$.   Find  the product of the distances from  the point $(1,0)$ to the other $n-1$ vertices.  The answer is $n$.

\noindent \textit{Proof.}  Consider the unit circle $|z|=1$ in the complex plane.
Let $\omega=e^{ \frac{ 2 \pi i}{n} }$. Then the other vertices of the polygon are the roots of unity $\omega, \omega^2, \dots, \omega^{n-1}$.
These numbers together with $1$ are roots of the polynomial $z^n-1$.  Therefore
\bd z^n -1 =(z-1)(z-\omega) (z- \omega^2) \dots (z-\omega^{n-1})  \ . \ed
Divide both sides by $z-1$ to obtain
\bd z^{n-1}+z^{n-2}+ \dots +z+1 =(z-\omega) (z- \omega^2) \dots (z-\omega^{n-1}) \ . \ed
Substitute $z=1$ and take absolute values to obtain the result. \hspace{2 cm} $\blacksquare$

Let $\theta=\frac{2 \pi  }{n}$. Then $\omega^k=e^{i k \theta}$ and the distance from $1$ to $\omega^k$  is
\bd |1 -\omega^k|=|1- \left( \cos {k \theta}+i \sin{k   \theta} \right)|=\sqrt{ 2- 2 \cos {k  \theta}}=2 \sin \frac{ k \theta}{2} =2 \sin{\frac{\pi k}{n} } \ . \ed

As a result, we obtain  the formula

\be \label{sines} \prod_{k=1}^{n-1} \sin {    \frac{k \pi }{n}  } = \frac{n}{2^{n-1}}   \ . \ee

This formula was standard in textbooks in the 19th century. For example, it is derived in S. L. Loney's book Plane Trigonometry, one of the books that Ramanujan used to learn mathematics.

We are now in a position to give  a different proof of the $A_n$ sine  formula (\ref{det}).
Consider the formula (\ref{sines}) with  $n$ replaced by $2(n+1)$.    It becomes

\bd \prod_{k=1}^{2n+1} \sin {    \frac{k \pi }{2(n+1)} } = \frac{n+1}{2^{2n}}=\frac{n+1}{4^n}    \ . \ed

\noindent We split the product on the left-hand side in the following way

\bd \prod_{k=1}^{n} \sin {   \frac{k \pi }{2n+2}  } \prod_{k=n+2}^{2n+1} \sin {    \frac{k \pi }{2n+2}  } \ . \ed
Note also that
\bd \prod_{k=n+2}^{2n+1} \sin {    \frac{k \pi }{2n+2}  } =\prod_{k=n+2}^{2n+1} \sin   \left( \pi-  \frac{k \pi }{2n+2}  \right) = \prod_{k=n+2}^{2n+1} \sin {   \frac{2n+2-k  }{2n+2} \pi }   \ed
\bd =\prod_{j=n}^{2n-1} \sin {    \frac{ 2n-j  }{2n+2} \pi  } \ . \ed

\noindent This last product is the same as
\bd \prod_{k=1}^{n} \sin {    \frac{k \pi  }{2n+2}   }  \ , \ed
in reversed order.  Consequently,

\bd \prod_{k=1}^{n} \sin^2{    \frac{k \pi }{2(n+1)}  }=\frac{n+1}{4^n}  \  ,  \ed
which yields  exactly formula (\ref{det}).

In table \ref{detlist} we list the determinants for the Cartan matrices of simple Lie algebras.

\begin{table}[detlist]
\caption{Determinants for Cartan matrices}  \label{detlist}
\begin{center}
\begin{tabular}{|c|c|c|c|c|c|c|c|c|c|}
  \hline
  Lie algebra  & $A_n$ & $B_n$ & $C_n$ & $D_n$ & $E_6$ & $E_7$ & $E_8$& $F_4$ & $G_2$ \cr

  \hline

 {\rm Det}  & $n+1$& 2 &  2 & 4 & 3&2&1& 1&1  \\
    \hline
\end{tabular}
\end{center}

\end{table}

\subsection{Chebyshev polynomials of the second kind} \label{csk}

The Chebyshev polynomials form  an infinite sequence of orthogonal polynomials.  The  Chebyshev polynomial of the second kind of degree $n$ is usually denoted by $U_n$. We list some properties of Chebyshev polynomials following \cite{mason}, \cite{wiki}.

A fancy way to define the $n$th Chebyshev polynomial of the second kind is

\bd
U_n(x)={\rm det} \ \begin{pmatrix} 2x & 1&  & & &  \cr
                     1 &2x&1 & &&  \cr
                       & \ddots &\ddots \ddots & & & \cr
                       & & \ddots & \ddots & \ddots&  \cr
                        & & & 1&2x& 1  \cr
                        & & & & 1 &2 x \end{pmatrix} \ ,
                        \ed
where $n$ is the size of the matrix. By expanding the determinant with respect to the first row we get the  recurrence
\bd U_0(x)=1, \ \ U_1(x)=2x, \ \  U_{n+1}(x)=2x U_n(x)- U_{n-1}(x)   \ .\ed

It is easy then to compute recursively the first few polynomials:
\bd
 \begin{array}{lcl}
U_0(x) & =& 1 \\
U_1(x) & =& 2x \\
U_2(x) & = &4x^2-1 \\
U_3(x)&=&8x^3-4x \\
U_ 4(x) & =& 16 x^4-12x^2+1 \\
U_ 5(x) & = & 32x^5-32x^3+6x \\
U_ 6(x) & =& 64x^6-80x^4+24 x^2 -1  \ .
\end{array}
 \ed

Letting $x=\cos{\theta}$ we obtain
\bd U_n(x)=\frac { \sin((n+1)\theta) } {     \sin \theta} \ .\ed

There is also an explicit formula

\be \label{Ubinom}
U_n(x)=\sum_{j=0}^n (-2)^j  \binom{ n+j+1}{ 2j+1} (1-x)^j    \  \ .
\ee

Another formula in powers of $x$ is
\be \label{Uproduct}
U_n(x)=\sum_{j=0}^{ [\frac{n}{2}]} (-1)^j \binom{n-j}{j} (2x)^{ n-2j}  \ .
\ee

The polynomials $U_n$ satisfy the following properties:
\bd
 \begin{array}{lcl}
U_n(-x)&=&(-1)^n U_n(x)\\
U_n(1)&=&n+1 \\
U_{2n}(0)&=&(-1)^n\\
U_{2n-1}(0)&=&0  \  .
\end{array}
 \ed

Knowledge of the roots of $U_n$ implies
\be \label{rootsan}
U_n(x)=2^n \prod_{j=1}^n \left[ x-\cos \left( \frac{j\pi}{n+1}\right) \right]   \ .
\ee

Setting $x=1$ in this equation  we obtain again the $A_n$ sine  formula (\ref{det}).

\begin{lemma}
\label{cheban}
\bd p_n(x)=U_n (x/2-1)  \ , \ed
where $U_n$ is the Chebyshev polynomial of the second kind.

\end{lemma}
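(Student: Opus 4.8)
The plan is to reduce the statement to a comparison of three-term recurrences. First I would write $p_n(x)=\det(xI_n-C_{A_n})$ and read off from the explicit tridiagonal form (\ref{cartan}) of $C_{A_n}$ that $xI_n-C_{A_n}$ is the tridiagonal Toeplitz matrix with every diagonal entry equal to $x-2$ and every sub- and super-diagonal entry equal to $1$. Expanding this determinant along the first row (and then along the first row of the resulting $(n-1)$-minor to dispose of the off-diagonal contribution) gives the standard tridiagonal recurrence
\bd p_n(x)=(x-2)\,p_{n-1}(x)-p_{n-2}(x),\qquad p_0(x)=1,\quad p_1(x)=x-2 . \ed

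Next I would perform the affine change of variable $x=2y+2$ and set $\tilde p_n(y):=p_n(2y+2)$. The recurrence above transforms into $\tilde p_n(y)=2y\,\tilde p_{n-1}(y)-\tilde p_{n-2}(y)$ with $\tilde p_0(y)=1$ and $\tilde p_1(y)=2y$. This is precisely the recurrence together with the initial data defining the Chebyshev polynomials of the second kind, so by induction $\tilde p_n=U_n$, i.e. $p_n(2y+2)=U_n(y)$, which is exactly $p_n(x)=U_n(x/2-1)$ after writing $y=x/2-1$.

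There is essentially no serious obstacle here; the only points requiring a little care are the bookkeeping in the determinant expansion (the $(1,2)$ and $(2,1)$ entries contribute exactly the $-p_{n-2}$ term, with no spurious sign, because their product is $(+1)(+1)=1$) and, if one prefers the alternative route via roots, the symmetry $j\mapsto n+1-j$. In that alternative argument one uses (\ref{eigen}) to write $p_n(x)=\prod_{j=1}^n\bigl(x-2+2\cos\tfrac{j\pi}{n+1}\bigr)$ and compares with the product formula (\ref{rootsan}), which gives $U_n(x/2-1)=\prod_{j=1}^n\bigl(x-2-2\cos\tfrac{j\pi}{n+1}\bigr)$; these agree because $j\mapsto n+1-j$ permutes $\{1,\dots,n\}$ and negates $\cos\tfrac{j\pi}{n+1}$. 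Finally I would remark that this Lemma is in any case the $A_n$ instance of Proposition \ref{main}, but the direct verification just sketched keeps the present section self-contained and also supplies the form needed for the proof of Proposition \ref{charan} in Section \ref{csk}.
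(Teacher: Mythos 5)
Your proof is correct and is essentially the paper's argument: the paper simply rewrites the diagonal entries $x-2$ of $xI_n-C_{A_n}$ as $2\left(\frac{x-2}{2}\right)$ and recognizes the resulting tridiagonal determinant as the determinantal definition of $U_n$ evaluated at $\frac{x}{2}-1$, which is exactly the three-term recurrence comparison you carry out explicitly. Your bookkeeping of the recurrence, the initial data, and the alternative root-based check are all sound.
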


\begin{proof}
We write the eigenvalue equation in the form
det$(xI_n - C_{A_n})=0$ where $I_n$ is the $n \times n$ identity matrix.  Explicitly,

\bd
{\rm det} \ ( x I_n -C_{A_n})={\rm det} \ \begin{pmatrix} x-2 & 1&  & & &  \cr
                     1 &x-2 &1 & &&  \cr
                       & \ddots &\ddots \ddots & & & \cr
                       & & \ddots & \ddots & \ddots&  \cr
                        & & & 1&x-2& 1  \cr
                        & & & & 1 & x-2 \end{pmatrix} =    \ed
                        \bd {\rm det} \ \begin{pmatrix} 2 \left(\frac{x-2}{2}\right) & 1&  & & &  \cr
                     1 &2 \left(\frac{x-2}{2}\right) &1 & &&  \cr
                       & \ddots &\ddots \ddots & & & \cr
                       & & \ddots & \ddots & \ddots&  \cr
                        & & & 1&  2 \left(\frac{x-2}{2}\right)& 1  \cr
                        & & & & 1 & 2 \left(\frac{x-2}{2}\right) \end{pmatrix}=U_n\left( \frac{x}{2} -1 \right)   \ . \ed
\end{proof}

\begin{remark}
Note that
\bd  p_n(0)=U_n(-1)=(-1)^n U_n(1)=(-1)^n (n+1)  \ , \ed
which agrees (up to a sign)  with the formula for the determinant of $A_n$.
\end{remark}

We list the  formula for the characteristic polynomial of the matrix $A_n$ for small values of $n$.

\begin{eqnarray*}
\begin{array}{rcl}
p_1(x)&=& x-2 \\
p_2(x) & = & x^2-4x+3=(x-1)(x-3) \\
p_3(x)& = & x^3-6 x^2+10x -4=(x-2)(x^2-4x+2) \\
p_4(x) & = & x^4-8x^3+21 x^2-20x +5 =(x^2-5x+5)(x^2-3x+1)\\
p_5(x) &=&  x^5-10 x^4+36 x^3 -56 x^2 +35 x -6=(x-1)(x-2)(x-3)(x^2-4x+1) \\
p_6(x)&=& x^6-12 x^5+55 x^4-120 x^3 +126 x^2 -56x +7 \\
p_7(x)&=& x^7-14 x^6+78 x^5- 220 x^4 +330 x^3-252 x^2 +84x -8 \ .
\end{array}
\end{eqnarray*}

Using formula (\ref{Ubinom}) we  easily prove  the following explicit formula.

\begin{proposition} \label{charan}
Let $p_n(x)$ be the characteristic polynomial of the Cartan matrix (\ref{cartan}).  Then
\bd p_n(x)=\sum_{j=0}^n  (-1)^{n+j}  \binom{ n+j+1}{ 2j+1} x^j  \ . \ed

\end{proposition}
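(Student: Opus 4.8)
The plan is to prove the formula by induction on $n$ using the three-term recurrence satisfied by $p_n$. By Proposition~\ref{main}, for type $A_n$ we have $p_n(x)=q_n(x/2-1)$ with $q_n=U_n$, and the Chebyshev recurrence $q_{n+1}(x)=2xq_n(x)-q_{n-1}(x)$ transforms into
\bd p_{n+1}(x)=(x-2)\,p_n(x)-p_{n-1}(x),\qquad p_0(x)=1,\quad p_1(x)=x-2\ . \ed
(The same recurrence also drops out of a cofactor expansion of the tridiagonal matrix (\ref{cartan}) along its first row, so the argument does not really depend on the Chebyshev machinery.) Hence it suffices to check that the right-hand side of the claimed identity satisfies these same initial data and the same recurrence.

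Write $\widehat p_n(x)=\sum_{j=0}^n(-1)^{n+j}\binom{n+j+1}{2j+1}x^j$. The cases $n=0,1$ are immediate, since $\widehat p_0=\binom{1}{1}=1$ and $\widehat p_1=-\binom{2}{1}+\binom{3}{3}x=x-2$. For the inductive step I would compare the coefficient of $x^j$ on the two sides of $\widehat p_{n+1}=(x-2)\widehat p_n-\widehat p_{n-1}$. The coefficient of $x^j$ in $(x-2)\widehat p_n$ equals $(-1)^{n+j-1}\binom{n+j}{2j-1}-2(-1)^{n+j}\binom{n+j+1}{2j+1}$, that in $\widehat p_{n-1}$ equals $(-1)^{n+j-1}\binom{n+j}{2j+1}$, and that in $\widehat p_{n+1}$ equals $(-1)^{n+j+1}\binom{n+j+2}{2j+1}$. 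Since $(-1)^{n+j+1}=(-1)^{n+j-1}=-(-1)^{n+j}$, cancelling the common sign reduces the step to the purely combinatorial identity
\bd \binom{n+j+2}{2j+1}=\binom{n+j}{2j-1}+2\binom{n+j+1}{2j+1}-\binom{n+j}{2j+1}\ , \ed
which follows from two applications of Pascal's rule: first $2\binom{n+j+1}{2j+1}-\binom{n+j}{2j+1}=\binom{n+j+1}{2j+1}+\binom{n+j}{2j}$, and then $\binom{n+j}{2j-1}+\binom{n+j}{2j}=\binom{n+j+1}{2j}$, so the right-hand side collapses to $\binom{n+j+1}{2j}+\binom{n+j+1}{2j+1}=\binom{n+j+2}{2j+1}$.

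The only places that need a moment's care are the two ends of the sum. At $j=0$ the term $\binom{n+j}{2j-1}=\binom{n}{-1}$ vanishes, and one checks the constant terms directly: $\widehat p_{n+1}(0)=(-1)^{n+1}(n+2)$, while $(x-2)\widehat p_n-\widehat p_{n-1}$ at $x=0$ equals $-2(-1)^n(n+1)-(-1)^{n-1}n=(-1)^{n+1}(n+2)$. At the top end, the coefficient of $x^{n+1}$ in $\widehat p_{n+1}$ is $(-1)^{2n+2}\binom{2n+3}{2n+3}=1$, matching the leading term coming from $x\cdot\widehat p_n$. With these endpoint checks in place the induction closes. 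I do not expect a genuine obstacle here: the argument is elementary once the recurrence is in hand, and the only real nuisance is bookkeeping with the signs $(-1)^{n+j}$ and with the index ranges of the binomial coefficients. As an alternative that bypasses the recurrence, one could substitute $y=x/2-1$ into the classical expansion $U_n(y)=\sum_{k\ge 0}(-1)^k\binom{n-k}{k}(2y)^{n-2k}$, expand each $(x-2)^{n-2k}$ by the binomial theorem, and collect powers of $x$; the coefficient of $x^j$ then appears as a single sum over $k$ that evaluates to $(-1)^{n+j}\binom{n+j+1}{2j+1}$ by a Chu--Vandermonde type identity. That works too, but the hypergeometric summation is more cumbersome than the two lines of Pascal's rule above, so I would present the inductive proof.
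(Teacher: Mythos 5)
Your proof is correct, but it takes a different route from the paper's. The paper deduces the formula in two lines from the identity $p_n(x)=U_n(x/2-1)$ (Lemma~\ref{cheban}) together with the parity relation $U_n(-x)=(-1)^nU_n(x)$ and the quoted expansion $U_n(x)=\sum_{j=0}^n(-2)^j\binom{n+j+1}{2j+1}(1-x)^j$: writing $(-1)^np_n(x)=U_n(1-x/2)$ turns each $(1-x)^j$ into $(x/2)^j$, and the powers of $2$ cancel immediately, with no Chu--Vandermonde summation needed (your proposed ``alternative'' via the expansion in powers of $x$ is indeed the clumsier substitution; the paper sidesteps it by using the expansion centred at $1$). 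Your inductive argument instead works directly from the three-term recurrence $p_{n+1}=(x-2)p_n-p_{n-1}$ and closes the coefficient identity $\binom{n+j+2}{2j+1}=\binom{n+j}{2j-1}+2\binom{n+j+1}{2j+1}-\binom{n+j}{2j+1}$ with two applications of Pascal's rule; I checked the base cases, the sign bookkeeping, and the endpoint terms $j=0$ and $j=n+1$, and all are in order. What your approach buys is self-containedness: it does not presuppose the $(1-x)$-expansion of $U_n$, which the paper merely cites, and in effect it reproves that expansion. What the paper's approach buys is brevity, at the cost of leaning on a stated but unproved classical formula.
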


\section{Cartan matrix of type $B_n$ and $C_n$ } \label{cbn}

\subsection{Chebyshev polynomials of the first kind}

The Chebyshev polynomials of the first kind are denoted by $T_n(x)$.

They are defined by  the  recurrence
\bd T_0(x)=1, \ \ T_1(x)=x, \ \  T_{n+1}(x)=2x T_n(x)- T_{n-1}(x)   \ .\ed

Using this recursion one may compute  the first few polynomials:
\bd
 \begin{array}{lcl}
T_0(x) & =& 1 \\
T_1(x) & =& x \\
T_2(x) & = &2x^2-1 \\
T_3(x)&=&4x^3-3x \\
T_ 4(x) & =& 8 x^4-8x^2+1 \\
T_ 5(x) & = & 16x^5-20x^3+5x \\
T_ 6(x) & =& 32x^6-48x^4+18 x^2 -1  \ .
\end{array}
 \ed

It is well-known that $\cos(n\theta)$ can be expressed as a polynomial in $\cos(\theta)$.
For example

\bd
 \begin{array}{lcl}
 \cos(0\theta)&=& 1  \\
\cos(1\theta) & =& \cos \theta \\
\cos(2 \theta) & =& 2(\cos \theta)^2 -1  \\
\cos (3 \theta) & =& 4(\cos \theta)^3-3 (\cos \theta) \ .
\end{array}
\ed

More generally we have:
\bd \cos(n \theta)=T_n( \cos \theta)  \ .\ed

There is also an explicit formula

\be \label{Tbinom}
T_n(x)=n \sum_{j=0}^n (-2)^j \frac{ (n+j-1)!}{ (n-j)! (2j)!} (1-x)^j    \ \ (n>0)  \ .
\ee

Another formula in powers of $x$ is
\be \label{Tproduct}
T_n(x)= \frac{n}{2} \sum_{j=0}^{ [\frac{n}{2}]} (-1)^j \frac{ (n-j-1)!} { j! (n-2j)!}  (2x)^{ n-2j}  \ .
\ee

The polynomials $T_n$ satisfy the following properties:
\bd
 \begin{array}{lcl}
T_n(-x)&=&(-1)^n T_n(x)\\
T_n(1)&=&1 \\
T_{2n}(0)&=&(-1)^n\\
T_{2n-1}(0)&=&0  \  .
\end{array}
 \ed

Also
\be \label{rootsbn}
T_n(x)=2^{n-1} \prod_{j=1}^n \left[ x-\cos \left( \frac{(2j-1) \pi}{2n}\right) \right]   \ .
\ee

Setting $x=1$ in this equation  gives the formula
\bd \prod_{j=1}^n \sin^2 \frac{ (2j-1) \pi}{4n}= \frac{1}{ 2^{2n-1}}  \ .\ed

We can also write this formula in the form:

\be  \label{detbn}   2^{2n} \prod_{j=1}^n \sin^2 \frac{ (2j-1) \pi}{4n} = 2    \ .  \ \ \ \ \ \ \ \ \ \ \left( B_n \ \ \   {\rm sine \ \ \  formula} \right) \ee

We refer to this relation as the $B_n$ sine formula.

\subsection{The characteristic polynomial}
The Cartan matrix of type $B_n$ is a tri-diagonal matrix of the form
\be \label{cartanbn} C_{B_n}= \begin{pmatrix} 2 & -1&  & & &  \cr
                     -1 &2 &-1& &&  \cr
                       & \ddots &\ddots \ddots & & & \cr
                       & & -1& 2 & -1&  \cr
                        & & & -1&2& -2  \cr
                        & & & & -1 & 2\end{pmatrix} \ .  \ee

Since the Cartan matrix of type $C_n$ is the transpose of this matrix we consider only the Cartan matrix of type $B_n$. Using expansion on the first row it easy to prove that  ${\rm det}\ (C_{B_n})=2$.

We list the formula for the characteristic polynomial of the matrix $B_n$ for small values of $n$.

\begin{eqnarray*}
\begin{array}{rcl}
p_2(x) & = & x^2-4x+2 \\
p_3(x)& = & x^3-6 x^2+9x -2 =(x-2)(x^2-4x+1)\\
p_4(x) & = & x^4-8x^3+20x^2-16x +2 \\
p_5(x) &=&  x^5-10 x^4+35 x^3 -50 x^2 +25 x -2=(x-2)(x^4-8x^3+19x^2-12x +1) \\
p_6(x)&=& (x^2-4x +1) (x^4-8x^3+20x^2-16x+1) \\
p_7(x)&=& x^7-14 x^6+77 x^5- 210 x^4 +294 x^3-196 x^2 +49x -2   \ .
\end{array}
\end{eqnarray*}

By  expanding the determinant of the matrix $2xI+A$ with respect to the first row, we obtain  the  recurrence
\bd q_1(x)=2x , \ \ q_2(x)=4x^2-2, \ \  q_{n+1}(x)=2x q_n(x)- q_{n-1}(x)   \ .\ed
One  may define $q_0(x)=2$.  The recurrence implies that  $q_n(x)=2 T_{n}(x)$ where $T_n$ is the $n$th Chebyshev polynomial of the first kind.

Using formula (\ref{Tbinom}) it is easy to show the following result.

\begin{proposition} \label{charbn}
Let $p_n(x)$ be the characteristic polynomial of the Cartan matrix (\ref{cartanbn}).  Then
\bd p_n(x)= \sum_{j=0}^{n-1}  (-1)^{n+j} \frac{ 2n  (n+j-1)!}{ (n-j)! (2j)!}  x^j  \ . \ed
\end{proposition}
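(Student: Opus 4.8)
The plan is to derive the claimed closed form for $p_n(x)$ directly from the identity $p_n(x) = q_n(x/2-1) = 2T_n(x/2-1)$ established above, combined with the explicit formula for $T_n$ in powers of $(1-x)$. The situation closely parallels the proof of Proposition~\ref{charan}, with the second-kind polynomial $U_n$ replaced by the first-kind polynomial $T_n$ and the binomial coefficient $\binom{n+j+1}{2j+1}$ replaced by $\frac{n(n+j-1)!}{(n-j)!(2j)!}$.

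First I would write $p_n(x) = 2T_n\!\left(\frac{x}{2}-1\right)$. To get rid of the sign bookkeeping, I would pass to $(-1)^n p_n(x) = 2(-1)^n T_n\!\left(\frac{x}{2}-1\right) = 2\,T_n\!\left(1-\frac{x}{2}\right)$, using the parity relation $T_n(-x) = (-1)^n T_n(x)$. Next I would apply the explicit formula
\bd T_n(y) = n \sum_{j=0}^{n} (-2)^j \frac{(n+j-1)!}{(n-j)!(2j)!} (1-y)^j \qquad (n>0) \ed
with $y = 1 - \frac{x}{2}$, so that $1-y = \frac{x}{2}$. This gives
\bd (-1)^n p_n(x) = 2n \sum_{j=0}^{n} (-2)^j \frac{(n+j-1)!}{(n-j)!(2j)!} \left(\frac{x}{2}\right)^j = 2n \sum_{j=0}^{n} (-1)^j \frac{(n+j-1)!}{(n-j)!(2j)!} x^j \ . \ed
The $j=n$ term has coefficient proportional to $(n-j)! = 0!$ in the denominator but the factor $(n+j-1)!/(2j)! = (2n-1)!/(2n)!$ is fine; however I should check whether the sum genuinely runs to $n$ or effectively to $n-1$. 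Since $T_n$ has degree $n$ in $y$, $T_n(1-x/2)$ has degree $n$ in $x$, so $p_n$ has degree $n$; yet the stated formula sums only to $n-1$. The resolution is that the formula for $T_n$ as written actually has its top term absorbed or the leading behaviour must be matched separately — I would verify that the $j=n$ term in the $T_n$ expansion, namely $n(-2)^n \frac{(2n-1)!}{0!\,(2n)!} = \frac{(-2)^n}{2}$, contributes the correct leading coefficient $(-1)^n$ to $p_n(x)$ after multiplying by $2$, i.e. $2 \cdot \frac{(-2)^n}{2}(-1)^n/2^n = \ldots$; careful tracking shows this matches, and then I would note that the stated sum to $n-1$ must be a typo or that the $x^n$ term is understood separately — more likely I should just present the sum to $n-1$ plus the explicit monic leading term, or re-examine the factorials so that the $j=n$ contribution is the leading term and the summation bound is a minor indexing issue.

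Dividing by $(-1)^n$ and rewriting $(-1)^j/(-1)^n = (-1)^{n+j}$ yields exactly
\bd p_n(x) = \sum_{j=0}^{n-1} (-1)^{n+j} \frac{2n\,(n+j-1)!}{(n-j)!(2j)!} x^j + (\text{leading } x^n \text{ term}) \ . \ed
The main obstacle I anticipate is precisely this discrepancy between the degree-$n$ polynomial and the summation range ending at $n-1$: I would need to confirm that the author intends the sum to include $j=n$ (reading $(n-j)!$ as $0! = 1$ there, which is legitimate), in which case the stated range is simply slightly loose, or that there is an implicit convention. Once that indexing point is pinned down, the proof is a one-line substitution with no real analytic content — the only care needed is the parity flip $T_n(-y)=(-1)^nT_n(y)$ and the factor of $2$ from $q_n = 2T_n$. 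I would also sanity-check the formula against the tabulated small cases, e.g. $p_2(x) = x^2 - 4x + 2$, confirming the coefficients $(-1)^{2+0}\frac{4\cdot 1!}{2!\cdot 0!} = 2$, $(-1)^{2+1}\frac{4\cdot 2!}{1!\cdot 2!} = -4$, and the leading $x^2$, which all agree.
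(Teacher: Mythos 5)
Your proposal is correct and is exactly the argument the paper intends: the paper omits this proof, stating only that it runs along the same lines as Propositions \ref{charan} and \ref{chardn}, namely substituting $y=1-\frac{x}{2}$ into the explicit $(1-y)^j$ expansion of $T_n$ after using $p_n(x)=2T_n(x/2-1)$ and the parity relation. You are also right that the stated upper limit $n-1$ is a typo for $n$ (the $j=n$ term, with $(n-j)!=0!=1$, supplies precisely the monic leading term $x^n$), and your check against $p_2(x)=x^2-4x+2$ confirms this.
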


\section{Cartan matrix of type $D_n$ }\label{cdn}

\subsection{The characteristic polynomial}
The Cartan matrix of type $D_n $ is a    matrix of the form
\be \label{cartandn} C_{D_n}=   \begin{pmatrix} 2 & -1&  & & &  \cr
                     -1 &2 &-1& &&  \cr
                       & \ddots &\ddots \ddots & & & \cr
                       & & -1& 2 & -1& -1 \cr
                        & & & -1&2& 0 \cr
                        & & & -1& 0& 2\end{pmatrix} \ .  \ee
   Note that the matrix is no longer tri-diagonal. Using expansion on the first row and induction it easy to prove that ${\rm det}\ (C_{D_n})=4$.

We list some formulas for the characteristic polynomial of the matrix $C_{D_n}$ for small values of $n$.

\begin{eqnarray*}
\begin{array}{rcl}
p_2(x)&=& x^2-4x+2=(x-2)^2 \\
p_3(x)& = & x^3-6 x^2+10x-4 =(x-2)(x^2-4x+2)\\
p_4(x) & = & x^4-8x^3+21x^2-20x+4=(x-2)^2(x^2-4x+1)\\
p_5(x) &=&  x^5-10 x^4+36 x^3 -56x^2 +34x-4=(x-2)(x^4-8x^3+20x^2-16x +2)\\
p_6(x)&=& x^6-12 x^5+55x^4-120 x^3 +125 x^2 -52x+4=(x-2)^2(x^4-8x^3+19x^2-12x +1) \ .

\end{array}
\end{eqnarray*}

\begin{proposition} \label{chardn}
Let $p_n(x)$ be the characteristic polynomial of the Cartan matrix (\ref{cartandn}).  Then
\bd p_n(x)= (x-2)\sum_{j=0}^{n-1} (-1)^{n+j} \frac{ (2n-2)(n+j-2)!} {(n-j-1)! (2j)! } x^j \ . \ed

\end{proposition}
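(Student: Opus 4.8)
The plan is to mirror the proof of Proposition~\ref{charan}, with the Chebyshev polynomial of the first kind playing the role that $U_n$ played there. By Proposition~\ref{main}, for type $D_n$ we have $q_n = 4x\,T_{n-1}$ and $p_n(x) = q_n\!\left(\frac{x}{2}-1\right)$, hence
\bd
p_n(x) \;=\; 4\left(\frac{x}{2}-1\right) T_{n-1}\!\left(\frac{x}{2}-1\right) \;=\; 2(x-2)\, T_{n-1}\!\left(\frac{x}{2}-1\right).
\ed
This already exhibits the factor $(x-2)$ that appears in the statement, so the problem reduces to expanding $2\,T_{n-1}\!\left(\frac{x}{2}-1\right)$ as a polynomial in $x$ with the prescribed coefficients.

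The next step is to invoke the reflection identity $T_m(-y) = (-1)^m T_m(y)$ to write $T_{n-1}\!\left(\frac{x}{2}-1\right) = (-1)^{n-1}\,T_{n-1}\!\left(1-\frac{x}{2}\right)$, exactly the manoeuvre used to pull out the factor $(-1)^n$ in the $A_n$ argument. I then substitute the argument $1-\frac{x}{2}$ into the explicit representation
\bd
T_m(y) \;=\; m\sum_{j=0}^{m} (-2)^j \,\frac{(m+j-1)!}{(m-j)!\,(2j)!}\,(1-y)^j \qquad (m>0),
\ed
applied with $m = n-1 \ge 1$. The key simplification is that $\bigl(1-(1-\tfrac{x}{2})\bigr)^j = (x/2)^j$ and $(-2)^j (x/2)^j = (-1)^j x^j$, so the $(1-y)^j$ terms collapse to honest monomials in $x$ and one gets
\bd
T_{n-1}\!\left(1-\frac{x}{2}\right) \;=\; (n-1)\sum_{j=0}^{n-1} (-1)^j\,\frac{(n+j-2)!}{(n-j-1)!\,(2j)!}\,x^j.
\ed

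Finally I would reassemble: multiplying by $2(x-2)(-1)^{n-1}$ and using $2(n-1)=2n-2$ yields the claimed formula, the alternating sign being inherited from the reflection step just as in Proposition~\ref{charan}. The argument is routine once set up; the only real work is the bookkeeping of signs and factorials in this last step, together with checking the boundary cases $j=0$, $j=n-1$ and the hypothesis $m>0$ (i.e.\ $n\ge 2$) needed for the explicit Chebyshev formula. I expect this sign/factorial tracking to be the main (if minor) obstacle, and one should verify the result against the small cases listed above, e.g.\ $p_2(x)=(x-2)^2$ and $p_3(x)=(x-2)(x^2-4x+2)$, to be sure the parity of the exponent of $-1$ has been carried through correctly.
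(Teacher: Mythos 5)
Your approach is exactly the paper's: write $p_n(x)=q_n(\tfrac{x}{2}-1)$ with $q_n=4xT_{n-1}$, pull out the factor $x-2$, flip the argument with $T_{n-1}(-y)=(-1)^{n-1}T_{n-1}(y)$, and expand via the explicit formula $T_m(y)=m\sum_{j=0}^{m}(-2)^j\frac{(m+j-1)!}{(m-j)!(2j)!}(1-y)^j$ with $m=n-1$. Your intermediate computations are all correct. The one place where your write-up does not hold as stated is the final assertion that reassembling ``yields the claimed formula'': carrying your own steps through gives
\bd p_n(x)=2(x-2)(-1)^{n-1}(n-1)\sum_{j=0}^{n-1}(-1)^j\frac{(n+j-2)!}{(n-j-1)!(2j)!}\,x^j=(x-2)\sum_{j=0}^{n-1}(-1)^{n+j+1}\frac{(2n-2)(n+j-2)!}{(n-j-1)!(2j)!}\,x^j \ , \ed
i.e.\ the sign is $(-1)^{n+j+1}$, not the $(-1)^{n+j}$ in the statement. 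This is not an error on your part: the Proposition as printed is off by an overall sign (for $n=2$ it would give $-(x-2)^2$, and for $n=3$ it gives $-(x-2)(x^2-4x+2)$), and the paper's own proof compensates by writing $q_n(1-\tfrac{x}{2})=2(x-2)T_{n-1}(1-\tfrac{x}{2})$ where in fact $4(1-\tfrac{x}{2})=-2(x-2)$. The sanity check you propose against $p_2=(x-2)^2$ and $p_3=(x-2)(x^2-4x+2)$ is precisely the right move and confirms that your version, with $(-1)^{n+j+1}$, is the correct one.
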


By expanding the determinant of the matrix $2xI+A$ with respect to the first row, we deduce  the recurrence
\bd    \  q_2(x)=4x^2, \qquad   q_3(x)=8x^3-4x,  \qquad  q_{n+1}(x)=2x q_n(x)-q_{n-1}   \ .\ed
We may define $q_1(x)=4x$. It is clear that $q_n(x)=4x T_{n-1}(x)$ where $T_n$ is  the $n$th
 Chebyshev polynomial of the first kind.

\section{The Coxeter Polynomial}

A Coxeter graph is a simple graph $\Gamma$ with $n$ vertices  and edge weights $m_{ij} \in \{ 3, 4, \dots, \infty\}$.  We define $m_{ii}=1$ and  $m_{ij}=2$ if node $i$ is not connected with node $j$.  By convention if $m_{ij}=3$ then the edge is often not labeled.  If $\Gamma$ is a Coxeter graph with $n$ vertices we define a bilinear form $B$ on ${\bf R}^n$ by choosing a basis $e_1, e_2, \dots, e_n$ and setting
\bd B(e_i, e_j)=-2 \cos { \frac{\pi}{m_{ij}} } \ .\ed
If $m_{ij}=\infty$ we define $B(e_i, e_j)=-2$.   We also define for $i=1,2, \dots, n$ the reflection
\bd \sigma_i (e_j)=e_j - B(e_i, e_j) e_i \ . \ed
Let $S=\{ \sigma_i \ | \ i=1, \dots, n \} $.  The Coxeter group $W(\Gamma)$ is the group generated by the reflections in  $S$.  $W$ has the presentation

\bd W=\langle \sigma_1, \sigma_2, \dots, \sigma_n \ | \ \sigma_i^2=1, (\sigma_i \sigma_j)^{m_{ij}} =1 \rangle \ .\ed
It is well-known that $W(\Gamma)$ is finite if and only if $B$ is positive definite.
 A \textit{Coxeter element} (or transformation)  is a product of the form
 \bd \sigma_{\alpha(1)} \sigma_{\alpha(2)} \dots \sigma_{\alpha(n)}   \ \ \ \alpha \in S_n  \ .\ed
 If the Coxeter graph is a tree then the Coxeter elements are in a single conjugacy class in $W$.  A \textit{Coxeter polynomial}  for the Coxeter system $(W,S)$ is the characteristic polynomial of the matrix representation of a Coxeter element.  For Coxeter systems whose graphs are trees the Coxeter polynomial is uniquely determined. This covers all the cases we investigate.
We define $\rho(W)$  to be the spectral radius of the associated Coxeter adjacency matrix.
A Coxeter system is called

\begin{enumerate}
\item
Spherical if $\rho(W)<2$
\item
Affine if $\rho(W)=2$
\item
Hyperbolic or higher-rank if $\rho(W)>2$.
\end{enumerate}
In this paper we consider only spherical Coxeter systems. In this case $B$ is positive definite and the Coxeter element has finite order.

\begin{example}

Consider a Coxeter system with graph $A_3$.

\begin{picture}(30,30)(-60,40)
$
\put(25,20){\line(1,0){50}}
\put(85,20){\line(1,0){50}}
\put(20,20){\circle{10}}
\put(80,20){\circle{10}}
\put(140,20){\circle{10}}
\put(0,40){}
$
\end{picture}

\vskip 2 cm

The bilinear form is defined by the Cartan matrix
\bd  \begin{pmatrix}
 2 &  -1 & 0 \cr
  -1&  2 & -1 \cr
  0& -1 & 2 \end{pmatrix} \ .
  \ed

The reflection $\sigma_1$  is determined  by the action
\bd \sigma_1 (e_1)=-e_1, \ \ \ \sigma_1 (e_2)=e_1+e_2, \ \ \ \sigma_1(e_3)=e_3  \ . \ed
It has the  matrix representation

\bd \sigma_1 = \begin{pmatrix}
 -1 &  1 & 0 \cr
  0&  1 & 0 \cr
  0& 0 & 1 \end{pmatrix} \ .
  \ed
Similarly

\bd \sigma_2=  \begin{pmatrix}
 1 &  0 & 0 \cr
  1&  -1 & 1 \cr
  0& 0 & 1 \end{pmatrix} \ , \ \ \ \ \
  \sigma_3=  \begin{pmatrix}
 1 &  0 & 0 \cr
  0&  1 & 0 \cr
  0& 1 & -1 \end{pmatrix} \ .
  \ed

A Coxeter element is defined by
\bd R=\sigma_1 \sigma_2 \sigma_3=  \begin{pmatrix}
 0 &  0 & -1 \cr
  1&  0& -1 \cr
  0& 1 & -1 \end{pmatrix} \ .
  \ed
\end{example}

The Coxeter polynomial is  the characteristic polynomial of $R$ which is  \bd x^3+x^2+x+1 \ . \ed
  Note that $R$ has order $4$, $R^4=I$. The order of the Coxeter element is called the Coxeter number.  Note that the roots of the Coxeter polynomial are $-1, -i, i$. We can write $i=i^{1}, -1=i^2, -i=i^3$. The integers $2,3,4$ are the degrees of the Chevalley invariants and their product, $24$,  is the order of the Coxeter  group.

\subsection{Exponents}

In general the order of the Coxeter  group is
\bd (m_1+1) (m_2+1) \dots (m_l+1)  \ed
where $m_i$ are the exponents of the eigenvalues of the Coxeter polynomial. The factors $m_i+1$ are the degrees of the Chevalley invariants. If $\zeta$ is a primitive  $h$ root of unity (where $h$ is the Coxeter number) then the roots  of the Coxeter polynomial are $\zeta^m$ where $m$ runs over the exponents of the corresponding root system \cite{coleman2},  \cite{coxeter2}.  This observation allows the calculation of the Coxeter polynomial for each root system.

In table \ref{coxetern} we list the Coxeter number and the exponents  for each root system.

\begin{table}[coxetern]
  \caption{Exponents and Coxeter number for root systems.\label{coxetern}}
  \begin{center}
  \begin{tabular}{|c|c|c|}
    \hline
      \ Root system\ \ & Exponents\ \ &  Coxeter number\\
    \hline
      ${\mathcal A}_n$ &$1,2,3, \dots, n$ &$n+1$\\
      ${\mathcal B}_n$ &$1,3,5, \dots, 2n-1$ &$2n$\\
      ${\mathcal C}_n$ &$1,3,5, \dots, 2n-1 $&$2n$\\
      ${\mathcal D}_n$ &$1,3,5,...., 2 n-3, n-1$ &$2n-2$\\
      ${\mathcal E}_6$ & $1,4,5,7,8,11  $&$12$\\
      ${\mathcal E}_7$ &$1,5,7,9,11,13,17$ &$18$\\
       ${\mathcal E}_8$ &$1,7,11,13,17,19,23,29$ &$30$\\
       ${\mathcal F}_4$ &$1,5,7,11$ &$12$\\
        ${\mathcal G}_2$ &$1,5$ &$6$\\

    \hline
  \end{tabular}
  \end{center}
\end{table}

Let us recall the definition of exponents for a simple complex Lie group
$G$, see \cite{coleman}, \cite{collin},  \cite{kostant}.    We
form the de Rham cohomology groups $H^i (G, \mathbb{C})$ and the
corresponding Poincar\'e polynomial of $G$ as follows,
\begin{displaymath}
p_G(t)=\sum_{i=1}^{\ell}  b_i t^i \ ,
\end{displaymath}
 where $b_i=  $dim  $H^i (G, \mathbb{C})$ are the Betti numbers of $G$. The De Rham groups encode topological information about $G$.
 Following work of Cartan, Ponrjagin and Brauer,   Hopf proved that the
cohomology algebra is  isomorphic to that of  a finite product of $\ell$
 spheres of odd dimension  where $\ell$ is the rank of $G$. This result  implies that
\bd
p_G(t)= \prod_{i=1}^{\ell} (1+t^{2 m_i +1} )\ .
\ed
The  positive integers $\{ m_1, m_2, .... , m_{\ell} \}$ are called the
{\it exponents} of $G$. They are also the exponents of the Lie algebra $\mathfrak{g}$ of $G$.

One can also extract the exponents from
the root space decomposition of $ \mathfrak{g}$ following   methods  which were developed by R. Bott, A. Shapiro, R. Steinberg,  A. J. Coleman and B. Kostant. We describe a method which was observed empirically by A. Shapiro and R. Steinberg and proved in a uniform way by B. Kostant.
 If $\alpha \in \Delta^{+}$, then we can express $\alpha$ uniquely as a sum of simple roots, $\alpha=\sum_{i=1}^{\ell} n_i \alpha_i $, where $n_i$ are non-negative integers. The height of $\alpha$ is defined to be  the number
\bd {\rm ht}\, (\alpha)=\sum_{i=1}^{\ell} n_i \ . \ed

The  procedure  is the following.
\begin{proposition}
 Let $b_j$ be  the number of $\alpha \in \Delta^{+}$ such that ${\rm ht}\, (\alpha)=j$. Then $b_{j}-b_{j+1}$ is the number of times $j$ appears as an exponent of $\g$. Furthermore,
\bd\ell =b_1 \ge b_2 \ge \cdots \ge b_{h-1}  \ ,  \ed
where $h$ is the Coxeter number.  Moreover, the partition of $r=|\Delta^{+}|$ as defined by the above sequence is conjugate to the partition
\bd h-1=m_{\ell} \ge m_{\ell-1} \ge \cdots \ge m_1=1  \ . \ed
The Coxeter number is determined by the relation $h \ell =2r$.
\end{proposition}

\begin{example} {\rm  Consider the   example of  $A_3$. Recall that $\Pi=\{\alpha_1, \alpha_2, \alpha_3 \}$ and the set of positive roots $\Delta^{+}$  is given by \bd \Delta^{+}= \{ \alpha_1, \alpha_2, \alpha_3, \alpha_1+\alpha_2, \alpha_2+\alpha_3, \alpha_1+\alpha_2+\alpha_3  \} \ . \ed
Therefore $r=6$ and the Coxeter number is determined by $h \ell =2r$, i.e., $3h=12$, or $h=4$.  The number of roots of height $1$ is $3$, therefore $b_1=3$. Similarly, $b_2=2$ and $b_3=1$.    Therefore, the exponents are $m_3=3$, $m_2=2$ and $m_1=1$.}
\end{example}

The exponents of a  complex simple Lie algebra   are given in  Table \ref{coxetern}.

Note, in the Table, the well-known  duality of  the set of exponents:
\be  \label{dual} m_i+m_{\ell+1-i} = h  \ee
where $h$ is the Coxeter number.
\subsection{Cyclotomic polynomials}

 A complex number $\omega$ is called a {\it primitive
$n$th root of unity} provided $\omega$  is an $n$th root of unity
and has order $n$. Such an $\omega$ generates the group of units, i.e.,
$\omega, \omega^2, \dots, \omega^n=1 $ coincides with the set
of all roots of unity.  An $n$th root of $1$  of the form $\omega^k$ is a primitive
root of unity iff $k$ is relatively prime to $n$. Therefore the
number of primitive roots of unity  is equal to $\phi(n)$ where $\phi$ is
Euler's totient function.

We define the $n$th cyclotomic polynomial by
\begin{displaymath}
\Phi_n(x)=(x-\omega_1)(x-\omega_2) \cdots (x-\omega_s)  \ ,
\end{displaymath}
where $\omega_1, \omega_2, \ldots \omega_s$ are all the distinct
primitive $n$th roots of unity. The degree of $\Phi_n$ is of course equal to $s=\phi(n)$.
 $\Phi_n(x)$ is a monic, irreducible polynomial with integer coefficients.   For example $\Phi_1(x) =x-1$, $\Phi_2(x) = x+1$, $\Phi_3(x) =x^2+x+1$
  and $\Phi_ 4(x) = x^2+1 $.

   A basic formula for  cyclotomic polynomials is

\begin{equation}
\label{cyclo} x^n -1= \prod_{d|n} \Phi_d(x) \ ,
\end{equation}
where $d$ ranges over all positive divisors of $n$. This gives a
recursive method of calculating  cyclotomic polynomials.  For
example, if $n=p$,  where $p$ is prime, then $x^p -1=\Phi_1(x) \Phi_p
(x)$ which implies that \bd \Phi_p(x)=x^{p-1}+x^{p-2}+\dots+x+1 \
.\ed

In table \ref{coxeterpoly} we list the Coxeter polynomial and its factorization as a product of cyclotomic polynomials. The cases $B_n$ and $D_n$ are  explained in remark \ref{remarkbn}.

\begin{table}[coxeterpoly]
  \caption{Coxeter polynomials for Spherical Graphs\label{coxeterpoly}}
  \begin{center}
  \begin{tabular}{|c|c|c|}
    \hline
      \ Dynkin Diagram\ \ &Coxeter polynomial \ \ &\ Cyclotomic Factors\ \  \\
    \hline
      ${\mathcal A}_n$ & $x^n+x^{n-1}+\dots x+1$ &$\prod_{\substack{  d|(n+1)     \\  d\not=1}} \Phi_d$      \\
      ${\mathcal B}_n, {\mathcal C}_n$ & $x^n+1$ &$ \prod_{d|N} \Phi_{2md} (x)(*) $  \\
      ${\mathcal D}_n$ & $x^n+x^{n-1}+x+1$ & $ \Phi_2 \prod_{d|N} \Phi_{2md} (x)(**)$  \\
      ${\mathcal E}_6$ &$x^6+x^5-x^3+x+1$ &$ \Phi_3 \Phi_{12}$       \\
      ${\mathcal E}_7$ &$x^7+x^6-x^4-x^3+x+1$ &$\Phi_2 \Phi_{18}$  \\
      ${\mathcal E}_8$ &  $ x^8+x^7-x^5-x^4-x^3+x+1$  & $\Phi_{30}$ \\
       ${\mathcal F}_4$ &  $ x^4-x^2+1$  & $\Phi_{12}$ \\
        ${\mathcal G}_2$ &  $ x^2-x+1$  & $\Phi_6 $  \\
    \hline
  \end{tabular}
  \end{center}
\end{table}

\begin{remark} \label{remarkbn}
$(*)$ $x^n+1$  is irreducible iff $n$ is a power of $2$, see \cite{damianou2}. If $n$ is odd it has a factor of $x+1=\Phi_2(x)$.  To obtain the factorization of $x^{n}+1$ we use the following procedure. Let $n=2^{\alpha} p_1^{k_1} p_2^{k_2} \dots p_s^{k_s}$ where the  $p_i$ are odd primes.   Define $m=2^{\alpha}$ and $N=p_1^{k_1} \dots p_s^{k_s}$.  The factorization of $x^n+1$ is given by
\bd x^n+1= \prod_{d|N} \Phi_{2md} (x)  \ . \ed

\noindent
$(**)$ In the case of $D_n$ we use the same procedure with $n$ replaced by $n-1$.
\end{remark}

\begin{remark}
In table \ref{coxeterpoly} note the following: In the  factorization of $f_n(x)$, the factor of highest degree is $\Phi_h$ where $h$ is the Coxeter number.

\end{remark}

The roots of $U_n(x)$  are given by

\bd x_k= \cos \left( \frac{k \pi}{n+1} \right)  \ \ \ \ \ k=1,2, \dots, n   \ .  \ed

Therefore the roots of \bd a_n(x)=U_n \left( \frac{x}{2} \right)  \ed
are in the interval $[-2, 2]$. If a  monic polynomial with integer coefficients  has all of  its roots in the interval $[-2,2]$ then
they  are of a special form. We follow the proof from \cite{jones}.

\begin{proposition}\label{kron}
Let $\lambda $ be a non-zero real  root of a monic polynomial $p(x) \in {\bf Z}[x]$. If all the roots of $p(x)$ are real and in the interval $[-2,2]$ then $\lambda=2 \cos (2 \pi q)$  where $q$ is a rational number.
\end{proposition}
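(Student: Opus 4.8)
The plan is to reduce to the case where $p$ is the minimal polynomial of $\lambda$ and then run a pigeonhole argument on the forward orbit of $\lambda$ under the doubling map $\tau\colon x\mapsto x^2-2$, which is nothing but (twice) the Chebyshev relation $2T_2(x/2)=x^2-2$.

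First I would note that since $p$ is monic with integer coefficients, $\lambda$ is an algebraic integer; letting $m(x)\in\mathbf{Z}[x]$ be its minimal polynomial, $m$ divides $p$, so every conjugate of $\lambda$ is a root of $p$ and hence real and in $[-2,2]$. Thus it suffices to treat the case in which $\lambda$ is an algebraic integer all of whose conjugates are real and lie in $[-2,2]$; put $d=\deg m$. The key finiteness step is then: there are only finitely many algebraic integers of degree at most $d$ all of whose conjugates lie in $[-2,2]$. Indeed, the coefficients of the minimal polynomial of such an integer are, up to sign, the elementary symmetric functions of its $\le d$ conjugates, so the coefficient of $x^{d-k}$ is bounded in absolute value by $\binom{d}{k}2^{k}$; being an integer it has finitely many possible values, hence there are finitely many such minimal polynomials and finitely many such integers. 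Call this finite set $\mathcal{S}$, and note $\lambda\in\mathcal{S}$.

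Next I would check that $\tau$ maps $\mathcal{S}$ into itself. For $\alpha\in\mathcal{S}$ we have $\tau(\alpha)\in\mathbf{Q}(\alpha)$, so $\deg\tau(\alpha)\le d$; moreover the conjugates of $\tau(\alpha)$ are among the numbers $\tau(\alpha^{(i)})$ where $\alpha^{(i)}$ ranges over the conjugates of $\alpha$, and since each $\alpha^{(i)}\in[-2,2]$ and $\tau$ carries $[-2,2]$ into $[-2,2]$, all conjugates of $\tau(\alpha)$ lie in $[-2,2]$. Hence $\tau(\mathcal{S})\subseteq\mathcal{S}$, so the forward orbit $\lambda,\tau(\lambda),\tau^{2}(\lambda),\dots$ stays in the finite set $\mathcal{S}$ and is therefore eventually periodic: there exist integers $r\ge 0$ and $s\ge 1$ with $\tau^{\,r+s}(\lambda)=\tau^{\,r}(\lambda)$. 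Finally, write $\lambda=2\cos\theta$ with $\theta\in[0,\pi]$, which is possible since $\lambda\in[-2,2]$. From $\tau(2\cos\phi)=4\cos^{2}\phi-2=2\cos 2\phi$ one gets by induction $\tau^{j}(\lambda)=2\cos(2^{j}\theta)$, so the periodicity relation reads $\cos(2^{\,r+s}\theta)=\cos(2^{\,r}\theta)$, i.e. $2^{\,r+s}\theta=\pm 2^{\,r}\theta+2\pi n$ for some $n\in\mathbf{Z}$. In either sign the coefficient $2^{\,r+s}\mp 2^{\,r}$ is a nonzero integer (as $s\ge 1$), so $\theta=2\pi q$ with $q=n/(2^{\,r+s}\mp 2^{\,r})\in\mathbf{Q}$, and therefore $\lambda=2\cos(2\pi q)$.

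The main obstacle is the finiteness lemma together with the stability of $\mathcal{S}$ under $\tau$: one must be careful that applying $\tau$ cannot raise the degree (it only lowers it or keeps it, because $\mathbf{Q}(\tau(\alpha))\subseteq\mathbf{Q}(\alpha)$) and that the conjugates of $\tau(\alpha)$ remain in $[-2,2]$ — this is precisely what confines the orbit to a finite set and forces eventual periodicity. Everything after that is elementary trigonometry. (The hypothesis $\lambda\ne 0$ is not actually used; $0=2\cos(2\pi\cdot\frac14)$ is also of the asserted form.)
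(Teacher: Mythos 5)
Your proof is correct, but it takes a genuinely different route from the paper's. The paper passes to the reciprocal polynomial $Q(x)=x^{n}p\left(x+\frac{1}{x}\right)$, observes that its roots are the numbers $e^{\pm i\theta_j}$ on the unit circle, and then invokes Kronecker's theorem (a monic integer polynomial, not divisible by $x$, with all roots in the closed unit disk has only roots of unity as roots) as a black box, citing \cite{damianou}. You instead prove the ``real interval'' form of that statement from scratch: finiteness of the set $\mathcal{S}$ of algebraic integers of degree at most $d$ with all conjugates in $[-2,2]$, stability of $\mathcal{S}$ under $\tau(x)=x^{2}-2$, and the resulting eventual periodicity of the orbit $\tau^{j}(\lambda)=2\cos(2^{j}\theta)$. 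Under the substitution $x=z+\frac{1}{z}$ your map $\tau$ is conjugate to $z\mapsto z^{2}$, so your argument is precisely the classical proof of Kronecker's theorem transported to $[-2,2]$; what it buys is self-containedness (no appeal to an external result), at the cost of a longer write-up, whereas the paper's version is two lines once the cited theorem is granted. The individual steps you give all check out: the bound $\binom{d}{k}2^{k}$ on the coefficients of the minimal polynomial, the inclusion $\mathbf{Q}(\tau(\alpha))\subseteq\mathbf{Q}(\alpha)$ guaranteeing that $\tau$ does not raise the degree, the fact that every conjugate of $\tau(\alpha)$ has the form $\tau(\alpha^{(i)})$, and the nonvanishing of $2^{r+s}\mp 2^{r}$ for $s\ge 1$. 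Your closing remark that the hypothesis $\lambda\ne 0$ is never used is also correct, and applies equally to the paper's argument.
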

\begin{proof}
Let $n=deg(p)$ and define the associated polynomial
\bd Q(x)=x^n p(x+ \frac{1}{x})  \ .  \ed
Denote by  \bd \lambda=2 \cos \theta_1,  2 \cos \theta_2,  \dots 2 \cos \theta_n    \ed
the roots of $p(x)$.  Then
\bd p(x)=\prod_j \left( x-2 \cos \theta_j \right)  \ed
\bd Q(x) = \prod_j (x^2- 2x \cos \theta_j +1) =\prod_j (x-e^{i \theta_j} ) (x- e^{ -i\theta_j})  \ed
It follows from Kronecker's Theorem, see \cite{damianou},  that $e^{i\theta_1}$ is a root of unity and therefore $\frac{\theta_1}{2 \pi}$  is rational.
\end{proof}

Given a graph $\Gamma$ we  define the associated polynomial $Q_n(x)$ by the formula
\bd Q_n(x)=x^n a_n (x + \frac{1}{x})  \ ,\ed
where $a_n(x)$ is the characteristic polynomial of the graph.  It is clear that $Q_n(x)$ is a reciprocal polynomial. In all the cases we consider $Q_n(x)$ is an even  polynomial.

\subsection{Associated polynomials for $A_n$ }

We present the factorization of $Q_n(x)$ for small values of $n$ as a product of cyclotomic polynomials. Note that $Q_n(x)$ is an even polynomial. The reason: If $n$ is even $U_n(x)$ is an even polynomial and  $a_n(x)=U_n( \frac{x}{2})$ is also even. Therefore $Q_n(x)$ is  even. If $n$ is odd then $U_n(x)$ and $a_n(x)$ are both odd functions.  This implies that $Q_n(x)$ is even.

\begin{itemize}
\item  $A_2$ \ \    $\Phi_3 \Phi_6$
\item $A_3$  \ \   $\Phi_4 \Phi_8$
\item $A_4$ \ \   $\Phi_5 \Phi_{10}$
\item  $A_5$ \ \  $\Phi_3 \Phi_4 \Phi_6 \Phi_{12} $
\item  $A_6$ \ \   $\Phi_7 \Phi_{14} $
\item  $A_7$ \ \  $\Phi_4 \Phi_8 \Phi_{16}$
\item  $A_8$ \ \  $\Phi_3 \Phi_6 \Phi_9 \Phi_{18}$
\item  $A_9$ \ \  $\Phi_4 \Phi_5 \Phi_{10} \Phi_{20}$
\item  $A_{10}$ \ \ $\Phi_{11} \Phi_{22} $
\item  $A_{11}$  \ \ $\Phi_3 \Phi_4 \Phi_6 \Phi_8 \Phi_{12} \Phi_{24}$
\end{itemize}

The characteristic polynomial of the Coxeter transformation has roots
$ \zeta^k$ where $\zeta$ is a primitive $h$ root of unity and $k$ runs over the exponents of a root system of type $A_n$.  Therefore
\bd f_n(x)=(x-\zeta)(x-\zeta^2) \dots (x-\zeta^n)  \ . \ed
\bd \Rightarrow (x-1)  f_n(x)=x^{n+1}-1  \ed
\bd \Rightarrow  f_n(x)=\frac{x^{n+1}-1 }{ x-1}  \ . \ed
Using formula (\ref{cyclo})  we obtain
\bd f_n(x)=\prod_{\substack{  d|{n+1}      \\  d\not=1}} \Phi_d  \ . \ed

It is not difficult to guess the factorization of $Q_n(x)$.

\begin{proposition}
\be  \label{prod}  Q_n(x)=\prod_{\substack{ j|2n+2 \\  j\not=1,2}} \Phi_j(x) \ . \ee
\end{proposition}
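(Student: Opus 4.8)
The plan is to compute the roots of $Q_n(x)$ directly and recognise them as roots of unity. By Lemma~\ref{cheban} we have $a_n(x)=U_n(x/2)$, and since $U_n$ has leading coefficient $2^n$ (see (\ref{rootsan})), the polynomial $a_n$ is \emph{monic} of degree $n$; moreover (\ref{rootsan}) gives the factorization $a_n(x)=\prod_{k=1}^{n}\bigl(x-2\cos\tfrac{k\pi}{n+1}\bigr)$. Feeding $x+\tfrac1x$ into this and multiplying by $x^n$ yields
\[
Q_n(x)=x^n a_n\!\left(x+\frac1x\right)=\prod_{k=1}^{n}\left(x^2-2\cos\frac{k\pi}{n+1}\,x+1\right)=\prod_{k=1}^{n}\left(x-e^{ik\pi/(n+1)}\right)\left(x-e^{-ik\pi/(n+1)}\right),
\]
since each factor $x^2-2\cos\tfrac{k\pi}{n+1}\,x+1$ has roots $e^{\pm ik\pi/(n+1)}$. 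Hence $Q_n$ is monic of degree $2n$ with root multiset $\{e^{\pm ik\pi/(n+1)}:1\le k\le n\}$.

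The heart of the argument is to identify this multiset. Write $e^{\pm ik\pi/(n+1)}=e^{\pm 2\pi i k/(2n+2)}$ and reduce the exponents modulo $2n+2$: the values $\{\pm k:1\le k\le n\}$ become $\{1,\dots,n\}\cup\{n+2,\dots,2n+1\}$, that is, all residues in $\{0,1,\dots,2n+1\}$ except $0$ and $n+1$. These $2n$ residues are pairwise distinct (for $1\le k,k'\le n$ one has $k\equiv\pm k'\pmod{2n+2}$ only when $k=k'$, because $2\le k+k'\le 2n<2n+2$), so the roots of $Q_n$ are exactly the $(2n+2)$-th roots of unity other than $1$ (residue $0$) and $-1$ (residue $n+1$), each with multiplicity one. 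Therefore
\[
Q_n(x)=\frac{x^{2n+2}-1}{(x-1)(x+1)},
\]
and applying (\ref{cyclo}) to $x^{2n+2}-1$ together with $(x-1)(x+1)=\Phi_1(x)\Phi_2(x)$ gives $Q_n(x)=\prod_{\,j\mid 2n+2,\ j\ne 1,2}\Phi_j(x)$.

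There is no deep obstacle here; the only points to watch are the bookkeeping ones flagged above: that the leading factor $2^n$ of $U_n$ cancels the $2^{-n}$ coming from the argument $x/2$, so that $a_n$ is monic and the quadratic factors come out as $x^2-cx+1$; that the $2n$ unit-circle roots really are distinct and avoid $\pm1$, which uses $\cos\tfrac{k\pi}{n+1}\in(-1,1)$ (equivalently $U_n(\pm1)\neq0$); and that the two deleted residues are precisely $0$ and $n+1$, so that exactly $\Phi_1$ and $\Phi_2$ drop out while every other $\Phi_j$ with $j\mid 2n+2$ survives once. A shorter-looking but ultimately fussier route would invoke the already-derived Coxeter-polynomial factorization $f_n=\prod_{d\mid n+1,\,d\ne1}\Phi_d$ together with the relation $Q_n(x)=f_n(x^2)$ and the cyclotomic identities $\Phi_{2d}(x)=\Phi_d(x^2)$ for $d$ even and $\Phi_d(x^2)=\Phi_d(x)\Phi_{2d}(x)$ for $d$ odd; the direct root count above is cleaner.
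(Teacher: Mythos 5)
Your proof is correct, but it takes a genuinely different route from the one in the paper. The paper's proof starts from the already-derived Coxeter-polynomial factorization $f_n=\prod_{d\mid n+1,\,d\neq 1}\Phi_d$, writes $Q_n(x)=f_n(x^2)$, and then invokes the identities $\Phi_j(x^2)=\Phi_{2j}(x)$ for $j$ even and $\Phi_j(x^2)=\Phi_j(x)\Phi_{2j}(x)$ for $j$ odd, finishing with a divisor-by-divisor check that every $j\mid 2n+2$ with $j\neq 1,2$ occurs exactly once --- precisely the ``fussier route'' you mention and set aside at the end. You instead compute the root multiset of $Q_n$ directly from the product formula (\ref{rootsan}) for $U_n$, observe that it consists of exactly the $(2n+2)$-th roots of unity other than $\pm 1$, each simple, and conclude $Q_n(x)=(x^{2n+2}-1)/\bigl((x-1)(x+1)\bigr)$, after which (\ref{cyclo}) gives the claim immediately. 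Your closed form is the same identity the paper obtains in the subsequent ``alternative'' derivation via $x=e^{i\theta}$ and $U_n(\cos\theta)=\sin((n+1)\theta)/\sin\theta$, but you reach it by elementary bookkeeping with residues modulo $2n+2$ rather than by a trigonometric identity, and your careful verification that the roots are distinct and avoid $\pm 1$ is exactly what makes the degree count close. What the paper's approach buys is reusability: the same cyclotomic doubling identities are deployed again for the $B_n$ and $D_n$ propositions. What your approach buys is a self-contained argument with a clean intermediate closed form and no case split on the parity of divisors.
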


\begin{proof}
Since
\bd Q_n(x)=f_n(x^2)=\prod_{\substack{  j|{n+1}      \\  j\not=1}} \Phi_j(x)  \ed
we should know what is $\Phi_j(x^2)$.

It is well-known, see \cite{damianou2},  that
\[
\Phi_j(x^2)=\left\{\begin{array}{cl}
	\Phi_{2j}(x), & {\rm if} \ \ j \ \ {\rm is \ \  even} \\
	\Phi_j(x) \Phi_{2j} (x), & {\rm if} \ \ j \ \ {\rm is \ \  odd}
	   \end{array}\right.
\]

To complete the proof we must show that each divisor of $2n+2$ bigger than $2$  appears in the product (\ref{prod}).
Let $d$ be a divisor of $2n+2$ bigger than $2$.   We consider two cases:

\noindent i)  If $d$ is odd then since $d|2(n+1)$ we have that $d|n+1$.  Since $\Phi_d$ is a factor of $f_n(x)$,  then $f_d(x^2)=\Phi_d(x) \Phi_{2d} (x)$, and therefore $\Phi_d$ appears.

\bigskip
\noindent ii) If $d$ is even, then $d=2s$ for some integer $s$ bigger than $1$.  Since $2s|2(n+1)$ we have that $s|n+1$.  Therefore $\Phi_s$ appears in the factorization of $f_n(x)$.  If $s$ is odd then $\Phi_s(x^2)=\Phi_s(x) \Phi_{2s}(x)$ and if $s$ is even $\Phi_s(x^2)=\Phi_{2s}(x)$. In either case $\Phi_{2s}=\Phi_d$ appears.

\end{proof}

An alternative way to derive the  formula for $f_{A_n}$ is the following: Note that
the Coxeter adjacency matrix $A$ is related to the Cartan matrix with
$A=2I-C$ and $a_n(x)=p_n(x+2)=q_n(\frac{x+2}{2} -1)=q_n ( \frac{x}{2} )$.
In the case of $A_n$ we have
\bd a_n(x)=U_n( \frac{x}{2} ) \ . \ed
Therefore,
\bd Q_n(x)=x^n  U_n \left( \frac{1}{2} ( x+ \frac{1}{x}) \right)  \ . \ed

Set $x=e^{i\theta}$ to obtain

\begin{align*}
 Q_n(x)&= e^{in \theta} U_n \left( \frac{1}{2} ( e^{i\theta}+e^{-i \theta}) \right) \\
       & =e^{in \theta} U_n (\cos \theta) \\
       & =  e^{in \theta} \frac{\sin(n+1)\theta}{\sin \theta}  \\
       &=e^{in \theta} \frac{ \left( e^{i(n+1)\theta} +e^{-i(n+1) \theta }\right)}{e^{i\theta}-e^{-i \theta}} \\ &=\frac{x^{2(n+1)}-1}{x^2-1} \ . \end{align*}

       Setting   $u=x^2$ we obtain
\bd \frac{u^{n+1}- 1}{u-1}=u^n+u^{n-1}+ \dots + u+1  \ .\ed
This is of course the Coxeter polynomial of the $A_n$ graph.
Therefore   $Q_n(x)=x^{2n}+x^{2(n-1)}+ \dots + x^2+1$ for all $x \in {\bf C}$.

We present the characteristic polynomial of the adjacency matrix and the Coxeter polynomial for small values of $n$.
\begin{itemize}

\item $A_2$ \ \   $a_2:=x^2-1$ \ \ \ \ \ $f_2=\Phi_3$
\item $A_3$  \ \   $a_3:=x^3-2x $ \ \ \ \ \ $f_3=\Phi_2 \Phi_4$
\item  $A_4$ \ \  $a_4=x^4-3x^2+1  $ \ \ \ \ \ $f_4= \Phi_5$
\item  $A_5$ \ \  $a_5=x^5-4x^3+3x$ \ \ \ \ \  $f_5= \Phi_2 \Phi_3 \Phi_6 $
\item  $A_6$ \ \  $a_6=x^6-5x^4+6x^2-1  $ \ \ \ \ \ $f_6=\Phi_7$
\item  $A_7$ \ \  $a_7=x^7-6 x^5+10x^3-4x $ \  \ \ \ \ $f_7= \Phi_2 \Phi_4 \Phi_8$
\item  $A_8$ \ \  $a_8=x^8-7x^6+15x^4-10x^2+1$ \ \ \ \ \ $f_8=\Phi_3 \Phi_9$
\item  $A_{9}$ \ \ $a_9=x^9-8x^7+21x^5 -20 x^3+5x $ \ \ \ \ \ $f_{9}=\Phi_2 \Phi_5 \Phi_{10} $
\item  $A_{10}$ \ \ $a_{10}=x^{10}-9x^8+26x^6-35x^4+15x^2-1 $ \ \ \ \ \ $f_{10}=\Phi_{11}  \ .$
\end{itemize}

Note  that $a_n(x)$ is explicitly  given by the formula

\bd a_n(x)=U_n (\frac{x}{2})=\sum_{j=0}^{ [\frac{n}{2}]} (-1)^j \binom{n-j}{j} (x)^{ n-2j}  \ed

due to (\ref{Uproduct}).

\subsection{Associated Polynomials for $B_n$ and $C_n$ }

\bigskip

In the case of $B_n$ we have
\bd a_n (x)=2 T_n ( \frac{x}{2} ) \ . \ed
Therefore,
\bd Q_n(x)=2x^n  T_n \left( \frac{1}{2} ( x+ \frac{1}{x}) \right)  \ . \ed
Set $x=e^{i\theta}$ to obtain

\begin{align*}Q_n(x)&=2 e^{in \theta} T_n \left( \frac{1}{2} ( e^{i\theta}+e^{-i \theta}) \right)  \\
&= 2e^{in \theta} T_n (\cos \theta) \\
&= 2 e^{in \theta} \cos{n \theta}  \\
&= 2 e^{in \theta} \frac{1}{2} \left( e^{in\theta} +e^{-in \theta} \right) =e^{2 in \theta}+1=x^{2n}+1 \ . \end{align*}

Therefore   $Q_n(x)=x^{2n}+1$ for all $x \in {\bf C}$.  As a result the Coxeter polynomial is $f_n(x)=x^n+1$.  We present the factorization of $f_n(x)$ for small values of $n$.
\begin{itemize}
\item  $B_2$ \ \  $a_2=x^2-2$\  \ \  \ \  $f_2=\Phi_4$
\item $B_3$  \ \  $a_3=x^3-3x$\ \ \ \ \  $f_2=\Phi_2 \Phi_{6}$
\item $B_4$ \ \   $a_4=x^4-4x^2 +2$ \ \ \ \ \ $f_4=\Phi_{8}$
\item  $B_5$ \ \  $a_5=x^5-5x^3+5x $ \ \ \ \ \ $f_5=\Phi_2 \Phi_{10}$
\item  $B_6$ \ \  $a_6=x^6-6 x^4+9x^2-2$ \ \ \ \ \  $f_6=\Phi_4 \Phi_{12} $
\item  $B_7$ \ \  $a_7=x^7-7x^5+14x^3-7x$ \ \ \ \ \ $f_7=\Phi_2  \Phi_{14}$
\item  $B_8$ \ \  $a_8=x^8-8x^6+20x^4-16x^2+2 $ \  \ \ \ \ $f_8= \Phi_{16}$
\item  $B_9$ \ \  $a_9=x^9-9 x^7+27x^5-30x^3+9x $ \ \ \ \ \ $f_9=\Phi_2 \Phi_{6} \Phi_{18}$
\item  $B_{10}$ \ \ $a_{10}=x^{10}-10x^8+35 x^6 -50x^4 +25x^2-2 $ \ \ \ \ \ $f_{10}=\Phi_{4} \Phi_{20} \ . $
\end{itemize}

Write $n=2^{\alpha} N$ where $N$ is odd.  As we already mentioned
\bd f_n(x)=x^n+1= \prod_{d|N} \Phi_{2md} (x)   \ , \ed
where $m=2^{\alpha}$.  Therefore
\bd f_n(x)=x^n+1=\prod_{\substack{ d|n \\  d \ {\rm odd}}} \Phi_{2^{\alpha+1}  d} (x)=\prod_{\substack{ d|N \\ }} \Phi_{2^{\alpha+1}  d} (x) \ . \ed

\begin{proposition} Let $r=2^{\alpha+2}$. Then
\be  \label{prodbn}  Q_n(x)=\prod_{\substack{ d|n \\  d \ {\rm odd}}} \Phi_{rd} (x) \ . \ee
\end{proposition}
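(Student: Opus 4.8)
The plan is to reduce the statement to the identity $Q_n(x)=f_n(x^2)$, which has already been established, together with the factorization $f_n(x)=x^n+1=\prod_{d\mid N}\Phi_{2md}(x)$ proved just above (here $m=2^\alpha$ and $N$ is the odd part of $n$). First I would substitute $x^2$ for $x$ in that factorization of $f_n$, obtaining
\[
Q_n(x)=f_n(x^2)=\prod_{d\mid N}\Phi_{2md}(x^2)\ .
\]
Everything then comes down to evaluating each factor $\Phi_{2md}(x^2)$ by means of the substitution rule quoted earlier (from \cite{damianou2}): if $j$ is even then $\Phi_j(x^2)=\Phi_{2j}(x)$, while if $j$ is odd then $\Phi_j(x^2)=\Phi_j(x)\Phi_{2j}(x)$.

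The key observation is that the index $2md=2^{\alpha+1}d$ is always even, since $\alpha\ge 0$ forces $2^{\alpha+1}\ge 2$; hence only the "even" branch of the rule is ever used, and
\[
\Phi_{2md}(x^2)=\Phi_{2\cdot 2md}(x)=\Phi_{4md}(x)=\Phi_{rd}(x)\ ,
\]
because $r=2^{\alpha+2}=4m$. Substituting back gives $Q_n(x)=\prod_{d\mid N}\Phi_{rd}(x)$, and since the odd divisors of $n=2^\alpha N$ are precisely the divisors of $N$, this is exactly $\prod_{d\mid n,\ d\ \mathrm{odd}}\Phi_{rd}(x)$, which is the assertion.

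Since each step is a direct substitution, I do not anticipate a genuine obstacle. The only point requiring care is verifying that $2md$ is always even, so that the simpler branch of the $\Phi_j(x^2)$ identity applies uniformly; in particular the "odd $j$" case — which would introduce extra low-order cyclotomic factors — never arises, which is why the product is a clean one over the odd divisors of $n$. As a consistency check one can compare with the $B_n$ data listed above: for $n=6$ one has $\alpha=1$, $N=3$, $r=8$, and the formula gives $Q_6=\Phi_8\Phi_{24}$, matching $x^{12}+1=(x^{24}-1)/(x^{12}-1)$.
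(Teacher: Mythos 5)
Your proof is correct and is essentially the paper's own argument: the paper's proof consists of the single line that the result ``follows from the formula $\Phi_k(x^2)=\Phi_{2k}(x)$ when $k$ is even,'' applied to the factorization $f_n(x)=x^n+1=\prod_{d\mid N}\Phi_{2md}(x)$ established just above. You have merely spelled out the details (in particular the observation that every index $2md=2^{\alpha+1}d$ is even, so the odd branch of the substitution rule never occurs), which is a faithful expansion of the intended proof.
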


\begin{proof}
It follows from the formula
\bd \Phi_k(x^2)=\Phi_{2k} (x)   \ed
when  $k$ is even.

\end{proof}

Note  that the  $a_n(x)$ polynomial  is explicitly  given in this case  by the formula

\bd a_n(x)= 2 T_n (\frac{x}{2})= \sum_{j=0}^{ [\frac{n}{2}]} (-1)^j \frac{ n (n-j-1)!} { j! (n-2j)!}  (x)^{ n-2j}   \ed

due to (\ref{Tproduct}).

Since $a_n(x)= 2 T_n ( \frac{x}{2})$  these polynomials satisfy the recursion
\bd a_{n+1}=x a_n(x)-a_{n-1} (x)  \ed
with $a_0(x)=2$ and $a_1(x)=x$.     We would like to mention a useful application  of these polynomials. One  can use them to express $x^n+x^{-n}$ as a function of $\zeta=x+\frac{1}{x}$.  For $x=e^{i\theta}$ it is just the expression of $2 \cos n \theta$ as a polynomial in $2 \cos \theta$. This polynomial is clearly $a_n(x)$, the adjacency polynomial  of $B_n$.

\begin{example} \label{zeta}
\bd \left(x+\frac{1}{x} \right)^2 =x^2 +\frac{1}{x^2}+2   \ . \ed
Therefore
\bd x^2 + \frac{1}{x^2}=\zeta^2 -2 =a_2(\zeta) \ . \ed
Similarly
\bd x^3 + \frac{1}{x^3}=\zeta^3-3 \zeta =a_3(\zeta) \ , \ed
\bd x^4 + \frac{1}{x^4}=\zeta^4-4 \zeta^2 +2=a_4(\zeta) \ . \ed

\end{example}

\subsection{Associated Polynomials for $D_n$  }

In the case of $D_n$ we have
\bd q_n(x)=4x T_{n-1}(x)  \ . \ed
 Therefore,
\bd a_n(x)=2x T_{n-1} ( \frac{x}{2} ) \ ,  \ed
and
\bd Q_n(x)=2x^n (x+ \frac{1}{x})  T_{n-1} \left( \frac{1}{2} ( x+ \frac{1}{x}) \right)  \ . \ed

Using the methods of the previous section we obtain    $Q_n(x)=x^{2n}+x^{2(n-1)}+x^2+1$.  We conclude that $f_n(x)=x^n+x^{n-1}+x+1$.  We present the formula for $a_n(x)$ and  the factorization of $f_n(x)$ for small values of $n$.
\begin{itemize}

\item $D_4$ \ \   $a_4=x^4-3x^2$ \ \ \ \ \ \ \ \ \ \ \ \  $f_4(x)=\Phi_2^2 \Phi_{6}$
\item  $D_5$ \ \  $a_5=x^5-4x^3+2x  $ \ \ \ \ \ $f_5= \Phi_2 \Phi_{8}$
\item  $D_6$ \ \  $a_6=x^6-5 x^4+5 x^2$ \ \ \ \ \  $f_6=\Phi_2^2 \Phi_{10} $
\item  $D_7$ \ \  $a_7=x^7-6x^5+9x^3-2x $ \ \ \ \ \ $f_7=\Phi_2  \Phi_4 \Phi_{12}$
\item  $D_8$ \ \  $a_8=x^8-7x^6+14 x^4 -7x^2 $ \  \ \ \ \ \ \ \ \ \ \ $f_8= \Phi_2^2 \Phi_{14} $
\item  $D_9$ \ \  $a_9=x^9-8x^7+20x^5-16x^3+2x $ \ \ \ \ \ $f_9=\Phi_2 \Phi_{16}$
\item  $D_{10}$ \ \ $a_{10}=x^{10}-9x^8+27x^6-30x^4+9x^2 $ \ \ \ \ \ $f_{10}=\Phi_2^2 \Phi_{6} \Phi_{18} \ . $
\end{itemize}

Write $n-1=2^{\alpha} N$ where $N$ is odd and $r=2^{\alpha+1}$.
\bd f_n(x)=(x+1)(x^{n-1}+1)= \Phi_2(x) \prod_{d|N} \Phi_{rd} (x)   \ . \ed

\begin{proposition}
\be  \label{proddn}  Q_n(x)=\Phi_4(x) \prod_{\substack{ d|{n-1} \\  d \ {\rm odd}}} \Phi_{2rd} (x) \ . \ee
\end{proposition}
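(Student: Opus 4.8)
The plan is to reduce everything to the already–established factorization of the Coxeter polynomial $f_n(x)=x^n+x^{n-1}+x+1$ together with the elementary rule for how cyclotomic polynomials behave under $x\mapsto x^2$. First I would recall from the computation preceding the statement that $Q_n(x)=x^n a_n(x+\tfrac1x)=f_n(x^2)$, and that $f_n(x)=(x+1)(x^{n-1}+1)$, which by the procedure in Remark~\ref{remarkbn} (applied with $n$ replaced by $n-1$) factors as
\bd f_n(x)=\Phi_2(x)\prod_{d\mid N}\Phi_{rd}(x),\ed
where $n-1=2^{\alpha}N$ with $N$ odd and $r=2^{\alpha+1}$. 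Substituting $x^2$ for $x$ gives
\bd Q_n(x)=f_n(x^2)=\Phi_2(x^2)\prod_{d\mid N}\Phi_{rd}(x^2).\ed

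Next I would invoke the identity $\Phi_k(x^2)=\Phi_{2k}(x)$, valid whenever $k$ is even, which is already quoted in the proof of Proposition (the $B_n$ associated–polynomial statement). For the first factor, $k=2$ is even, so $\Phi_2(x^2)=\Phi_4(x)$. For each $d\mid N$, the index $rd=2^{\alpha+1}d$ is even because $\alpha+1\ge 1$, hence $\Phi_{rd}(x^2)=\Phi_{2rd}(x)$. This yields
\bd Q_n(x)=\Phi_4(x)\prod_{d\mid N}\Phi_{2rd}(x).\ed

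Finally I would observe that the odd divisors of $n-1=2^{\alpha}N$ are precisely the divisors of $N$, so the product over $d\mid N$ is the same as the product over odd $d\mid n-1$, giving exactly the claimed formula
\bd Q_n(x)=\Phi_4(x)\prod_{\substack{d\mid n-1\\ d\ \mathrm{odd}}}\Phi_{2rd}(x).\ed
There is no serious obstacle here: the only points requiring care are (i) confirming that $f_n(x^2)=Q_n(x)$ really holds for all $x$ (this is the closed-form computation with $x=e^{i\theta}$ done just above the statement, which extends to all of $\mathbf{C}$ since both sides are polynomials), and (ii) checking the parity hypotheses so that the clean branch $\Phi_k(x^2)=\Phi_{2k}(x)$ applies to every factor — both the $\Phi_2$ and every $\Phi_{rd}$ have even index, so no $\Phi_k(x)\Phi_{2k}(x)$ splitting occurs. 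One could optionally cross-check the result against the small cases $D_4,\dots,D_{10}$ listed above (e.g. $D_7$: $n-1=6=2\cdot 3$, $\alpha=1$, $r=4$, giving $Q_7=\Phi_4\,\Phi_{8}\,\Phi_{24}$, consistent with $f_7=\Phi_2\Phi_4\Phi_{12}$).
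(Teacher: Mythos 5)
Your proposal is correct and is exactly the argument the paper intends: the paper leaves this proposition unproved but establishes $Q_n(x)=f_n(x^2)$ with $f_n(x)=(x+1)(x^{n-1}+1)=\Phi_2(x)\prod_{d\mid N}\Phi_{rd}(x)$ just above, and its proof of the analogous $B_n$ proposition is precisely the one-line appeal to $\Phi_k(x^2)=\Phi_{2k}(x)$ for even $k$. Your verification that every index involved ($2$ and each $rd=2^{\alpha+1}d$) is even, so no $\Phi_k\Phi_{2k}$ splitting occurs, is the only point needing care, and you handled it.
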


\section{Factorization of Chebyshev polynomials}
\subsection{Chebyshev polynomials of second kind}

It is well-known that the  roots of $U_n$ are given by
\bd x_k=\cos {\frac{k \pi}{n+1} }  \ \ \ \ \  \ k=1,2, \dots, n  \ , \ed
as we already observed in (\ref{rootsan}).

We can write them in the form $x_k=\cos k \theta$,
where $\theta=\frac{\pi}{n+1}$.

The roots of $a_n(x)=U_n( \frac{x}{2})$ are
\bd \lambda_k=2 \cos {  \frac{k\pi}{n+1}     }=2 \cos {k \theta}   \ \ \ \ \  \ k=1,2, \dots, n  \ , \ed
i.e.
the roots of $a_n(x)$ are
\bd 2 \cos \frac{ m_i \pi}{h}   \ed
where $m_i$ are the exponents of $A_n$ and $h$ is the Coxeter number for $A_n$.

\begin{itemize}
\item $a_n(x)$

The roots of $a_n(x)=U_n( \frac{x}{2})$ are
\bd \lambda_k=2 \cos {  \frac{k\pi}{h}     }=2 \cos {k \theta}   \ \ \ \ \  \ k=1,2, \dots, n  \ . \ed

Denote them by
\bd \lambda_1=2\cos \theta_1=2\cos \theta, \ \ \  \lambda_2=2\cos \theta_2, \dots, \ \ \lambda_n=2\cos \theta_n   \ .\ed
Note that $\theta_k=k \theta$ and
\bd \theta_j + \theta_{n+1-j}=\pi  \ . \ed
This implies that $\lambda_j=-\lambda_{n-j+1}$.  As a result
\be \label{minusa}  \{ \lambda_j | j=1,2, \dots, n \}=\{-\lambda_j| j=1,2, \dots, n \}  \ . \ee

\item
$p_n(x)$

The roots of $p_n(x)$ are then
\bd \xi_k=2 + \lambda_k=2+ 2 \cos { \frac{k \pi }{n+1}}=2 +2 \cos \theta_k=4 \cos^2 \frac{\theta_k}{2}  \ . \ed
It follows from (\ref{minusa}) that the eigenvalues of $C$ occur in pairs $\{ 2+ \lambda, 2-\lambda \}$. This is a general result which holds for each Cartan matrix corresponding to a simple Lie algebra over ${\bf C}$, see \cite[p. 345]{moody} for a general proof.

\item
$Q_n(x)$

It follows from Theorem  \ref{kron} that the roots of $Q_n(x)$ are
\bd e^{i\theta_1}, e^{i\theta_2}, \dots, e^{i\theta_n}, e^{-i \theta_1}, e^{-i\theta_2}, \dots, e^{-i\theta_n} \ , \ed
or equivalently,
\bd e^{i\theta}, e^{2i\theta}, \dots, e^{ni\theta}, e^{(n+2)i \theta}, e^{(n+3)i \theta}, \dots, e^{(2n+1)i \theta} \ . \ed
Note that \bd e^{i \theta}=e^{ \frac{i \pi}{h}} =e^{  \frac{2 \pi i}{2h} }  \ . \ed
As a result  $e^{i \theta}$ is a $(2h)$th primitive root of unity and therefore  a root of the cyclotomic polynomial $\Phi_{2 h}$.  The other roots of this cyclotomic polynomial are of course $e^{ki \theta}$  where $(k, 2h)=1$. This determines $\Phi_{2h}$ as an irreducible factor of $Q_n(x)$. To determine the other irreducible factors we proceed as follows: The root
\bd e^{2 i \theta}=e^{\frac{2 \pi i }{h}   }  \ed
is a primitive $h$th root of unity and the other roots of $\Phi_h$ are $e^{ki \theta}$ where $(k, 2h)=2$.
In general for each $d$ which is a divisor of $2h$ (but $d \not=1,2$)  we form $\Phi_d$ by grouping together all the $e^{ki\theta}$ such that $(k, 2h)=\frac{2h}{d}$.  Since $e^{ (n+1)i \theta}$ and $e^{2ni \theta}$ do not appear as roots of $Q_n(x)$  the cyclotomic polynomials $\Phi_1$ and $\Phi_2$ do not appear in the factorization. You can consider this argument as another proof of  Proposition \ref{prod}.

\item
$a_n(x)$

The roots of $a_n(x)$ are of the form $e^{i\theta_k}+ e^{-i \theta_k}$, where $e^{i \theta_k}$ is a root of $Q_n(x)$. It is easy to see that each irreducible factor of $Q_n(x)$ determines an irreducible factor of $a_n(x)$ and conversely.  In fact this is the argument of Lehmer in \cite{lehmer}. If $\Phi_d$ is a cyclotomic factor of $Q_n(x)$ then $\Phi_d$ being a reciprocal polynomial it can be written in the form
\bd \Phi_d(x)=x^m \psi_d(x+ \frac{1}{x} )  \ed
 where $m={\rm deg} \psi_n=\frac{1}{2} \phi(d)$.  According to Lehmer the  polynomial  $\psi_d$ is irreducible.
 These polynomials are all the irreducible factors of $a_n(x)$. This  fact is easily established by looking at the roots of $a_n(x)$.
The way to determine the irreducible factors of $a_n(x)$ is the following: Start with a cyclotomic factor of $Q_n(x)$. For example, consider $\Phi_{2h}(x)$.  The roots of this polynomial are   $e^{ki \theta}$  where $(k, 2h)=1$. Take only $e^{ki\theta}$ with $(k, 2h)=1$ such that $1 \le k \le n$.  The corresponding roots of $a_n(x)$ are of course $2 \cos k \theta$,  $1 \le k \le n$ and $(k, 2h)=1$. This determines the polynomial $\psi_{2h}$.  Then we repeat this procedure with the other cyclotomic factors.
\end{itemize}
\begin{example}
To determine the factorization of $U_5(x)=32x^5-32x^3+6x$.   Since $n=5$, $h=n+1=6$ and $\theta=\frac{\pi}{6}$.
The roots of $U_5$ are

\bd  \cos \frac{\pi}{6} \ \ \  \cos\frac{2 \pi}{6} \ \ \  \cos \frac{3\pi}{6} \ \ \  \cos \frac{ 4 \pi}{6} \ \ \  \cos \frac{5 \pi}{6}  \ .  \ed

The roots of $a_5(x)$ are

\bd  \lambda_1=2 \cos \frac{\pi}{6} \ \ \  \lambda_2=2\cos\frac{2 \pi}{6} \ \ \ \lambda_3=2 \cos \frac{3\pi}{6} \ \ \  \lambda_4=2\cos \frac{ 4 \pi}{6} \ \ \  \lambda_5=2\cos \frac{5 \pi}{6}  \ .  \ed

The roots of $Q_5(x)$ are
\bd e^{ \frac{\pi }{6} }, e^{  \frac{2 \pi }{6} }, e^{  \frac{3 \pi }{6} }, e^{  \frac{4 \pi }{6} }, e^{  \frac{ 5\pi }{6} }, e^{ \frac{7\pi }{6} }, e^{ \frac{8 \pi }{6} }, e^{ \frac{ 9 \pi }{6} }, e^{ \frac{10 \pi }{6} }, e^{11 \frac{\pi }{6} } \ . \ed

We group together all $e^{ik \theta}$ such that $(k, 12)=1$ i.e.,
\bd e^{ \frac{\pi }{6} }, e^{ \frac{5 \pi }{6} }, e^{ \frac{7 \pi }{6} }, e^{ \frac{11 \pi }{6} } \  \ed
which are the roots of $\Phi_{12}(x)$.  Note that
\bd e^{ \frac{\pi }{6} }+e^{11 \frac{\pi }{6} }=e^{ \frac{\pi }{6} }+e^{- \frac{\pi }{6} }=2 \cos \frac{\pi}{6}=\lambda_1=\sqrt{3}   \ , \ed
and
\bd e^{ \frac{5 \pi }{6} }+e^{ \frac{7\pi }{6} }=e^{ \frac{5 \pi }{6} }+e^{ \frac{-5 \pi }{6} }=\cos \frac{5 \pi}{6}=\lambda_5 =-\sqrt{3} \ . \ed
These roots $\lambda_1$ and $\lambda_5$ are roots of $\psi_{12}=x^2-3$ which is an irreducible factor of $a_5(x)$.

Then we group together all $e^{ik \theta}$ such that $(k, 12)=2$ i.e.,
\bd e^{ \frac{2 \pi }{6} }, e^{ \frac{10 \pi }{6} } \ed
which are the roots of $\Phi_6(x)$.  Note that
\bd e^{ \frac{2 \pi }{6} }+ e^{ \frac{10 \pi }{6} }=e^{ \frac{2 \pi }{6} }+ e^{- \frac{2 \pi }{6} }=2\cos\frac{2 \pi}{6}=\lambda_2=1  \ . \ed
Therefore $x-1$ is the irreducible factor of $a_5(x)$ corresponding to  $\Phi_6(x)$.

Then we group together all $e^{ik \theta}$ such that $(k, 12)=3$ i.e.,
\bd e^{ \frac{3 \pi }{6} }, e^{ \frac{9 \pi }{6} } \ed
which are the roots of $\Phi_4(x)$.  Note that
\bd e^{ \frac{3 \pi }{6} }+ e^{ \frac{9 \pi }{6} }=e^{ \frac{3 \pi }{6} }+ e^{- \frac{3\pi }{6} }=2\cos\frac{3 \pi}{6}=\lambda_3 =0\ . \ed
Therefore $\psi_4(x)=x$ is the irreducible factor of $a_5(x)$ corresponding to  $\Phi_4(x)$.

Finally we group together all $e^{ik \theta}$ such that $(k, 12)=4$ i.e.,
\bd e^{ \frac{4 \pi }{6} }, e^{ \frac{8 \pi }{6} } \ed
which are the roots of $\Phi_3(x)$.  Note that
\bd e^{ \frac{4 \pi }{6} }+ e^{ \frac{8 \pi }{6} }=e^{ \frac{4 \pi }{6} }+ e^{- \frac{4 \pi }{6} }=2 \cos\frac{4 \pi}{6}=\lambda_4=-1 \ . \ed
Therefore $\psi_3(x)=x+1$ is the irreducible factor of $a_5(x)$ corresponding to  $\Phi_3(x)$.
We end up with the integer factorization of $a_5(x)$ into irreducible factors:
\bd a_5(x)=x(x+1)(x-1)(x^2-3)  \ . \ed
Since $U_5(x)=a_5(2x)$ we obtain the factorization of $U_5(x)$:
\bd U_5(x)=2x (2x+1)(2x-1) (4x^2-3)  \ . \ed

\end{example}

In the following list we give the polynomial $\psi_n$ corresponding to each cyclotomic polynomial $\Phi_n$  up to $n=24$.
\bd
 \begin{array}{lcl}

\psi_3(x) & = &x+1 \\
\psi_ 4(x) & =& x  \\
\psi_ 5(x) & = &x^2+x-1 \\
\psi_ 6(x) & =& x-1\\
\psi_ 7(x) & =& x^3+x^2-2x-1  \\
\psi_8 (x) & =&x^2-2  \\
\psi_9 (x) & =&  x^3-3x+1 \\
\psi_{10} (x) & =& x^2-x-1\\
\psi_{11} (x) & =&x^5+x^4-4x^3-3x^2+3x+1  \\
\psi_{12} (x) & = &x^2-3 \\
\psi_{13} (x) & =& x^6+x^5-5x^4-4x^3+6x^2+3x-1 \\
\psi_{14} (x) & = &x^3-x^2-2x +1 \\
\psi_{15} (x) & =& x^4-x^3-4x^2+4x+1\\
\psi_{16} (x) & =& x^4-4x^2+2\\
\psi_{17} (x) & =& x^8+x^7-7x^6-6 x^5 +15 x^4+10x^3-10x^2-4x+1 \\
\psi_{18} (x) & = &x^3-3x-1 \\
\psi_{19} (x) & = &x^9+x^8-8 x^7-7x^6+21x^5+15x^4-20 x^3-10 x^2+5x+1  \\
\psi_{20} (x) & =& x^4-5 x^2+5 \\
\psi_{21} (x) &=& x^6-x^5-6x^4+6 x^3+8x^2-8x +1  \\
\psi_{22} (x) &=& x^5-x^4-4 x^3+3x^2 +3x -1  \\
\psi_{23}  (x) &=& x^{11}+x^{10}-10x^9-9x^8+36 x^7+28 x^6-56 x^5 -35 x^4+35 x^3+15 x^2-6x -1 \\
\psi_{24} (x) &=& x^4 -4 x^2 +1  \ .
\end{array}
 \ed

\begin{remark}
Since $\psi_n(x)$ is the minimal polynomial of $2 \cos \frac{2 \pi}{n}$ it is reasonable to define $\psi_1(x)=x-2$ and $\psi_2(x)=x+2$. They correspond to the reducible $Q_1(x)=(x-1)^2$ and $Q_2(x)=(x+1)^2$ respectively.
\end{remark}

 \begin{remark} \label{ex3660}
To compute $\psi_n$ is straightforward.  We give two examples.

\begin{itemize}
\item
$n=36$.
Since
\bd \Phi_{36}=x^{12}-x^6+1 \ \ed
the polynomial $\psi_{36} $ is of degree $6$.
We need
\bd x^{12}-x^6+1=x^6 \psi_{36}(\zeta)  \ed
where $\zeta=x+ \frac{1}{x}$.
Therefore
\bd \psi_{36}( \zeta)=x^6+ \frac{1}{x^6}-1=(\zeta^6-6 \zeta^4+9  \zeta^2-2)-1 = \zeta^6-6 \zeta^4+9  \zeta^2-3 \ .  \ed
\item

$n=60$.
Since
\bd \Phi_{60}=x^{16}+x^{14}-x^{10}-x^{8}-x^6 +x^2+1 \ \ed
the polynomial $\psi_{60} $ is of degree $8$.
We need
\bd   x^{16}+x^{14}-x^{10}-x^{8}-x^6 +x^2+1  =x^8 \psi_{60}(\zeta)  \ed
where $\zeta=x+ \frac{1}{x}$.
Therefore
\begin{align*}
 \psi_{60}( \zeta)&=(x^8+\frac{1}{x^8})+ (x^6+ \frac{1}{x^6}) -(x^2+\frac{1}{x^2})-1 \\
  &=  a_8 (\zeta)+ a_6(\zeta)-a_2(\zeta)-1 \\
  &=\zeta^8-7 \zeta^6 +14 \zeta^4-8 \zeta^2+1  \ .  \end{align*}

\end{itemize}

\end{remark}

\begin{example}
To find the factorization of \bd U_9(x)=512 x^9 -1024 x^7+672 x^5 -160 x^3 +10x \ . \ed

Since
\bd Q_9(x)=\Phi_4 \Phi_5 \Phi_{10} \Phi_{20}  \ , \ed
we have that
\bd a_9(x)=\psi_4 \psi_5 \psi_{10} \psi_{20} =x (x^2+x-1)(x^2-x-1)(x^4-5 x^2+5) \ . \ed
Finally
\bd U_9(x)=a_9(2x)=2x(4x^2+2x-1)(4x^2-2x-1)(16x^4-20x^2+5)  \ . \ed

\end{example}

To conclude we state the following result:

\begin{proposition}

\bd  U_n(x)=\prod_{\substack{ j|2n+2 \\  j\not=1,2}} \psi_j(2x) \ . \ed

\end{proposition}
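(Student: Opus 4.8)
The plan is to combine the cyclotomic factorization of $Q_n(x)$ coming from Proposition \ref{prod} with the Lehmer substitution $\Phi_j(x)=x^{\phi(j)/2}\,\psi_j(x+\tfrac1x)$, and then to undo the change of variables $x\mapsto x+\tfrac1x$ so as to land back on $U_n$.

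First I would start from Proposition \ref{prod}, which gives
\bd Q_n(x)=\prod_{\substack{j|2n+2\\ j\neq 1,2}}\Phi_j(x)\ .\ed
Since every $j$ occurring here satisfies $j\geq 3$, the polynomial $\Phi_j$ is reciprocal, so by the relation recorded in the text it factors as $\Phi_j(x)=x^{\phi(j)/2}\,\psi_j(x+\tfrac1x)$, where $\psi_j$ is the minimal polynomial of $2\cos\tfrac{2\pi}{j}$ and $\deg\psi_j=\tfrac12\phi(j)$. Substituting and collecting the monomial prefactors yields
\bd Q_n(x)=x^{\,e}\prod_{\substack{j|2n+2\\ j\neq 1,2}}\psi_j\!\left(x+\tfrac1x\right),\qquad e=\tfrac12\sum_{\substack{j|2n+2\\ j\neq 1,2}}\phi(j)\ .\ed
Next I would pin down $e$: from $\sum_{d|m}\phi(d)=m$ with $m=2n+2$, after removing the terms $j=1,2$ (each contributing $\phi(j)=1$), one gets $\sum_{j|2n+2,\,j\neq 1,2}\phi(j)=2n$, hence $e=n$. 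On the other hand, by definition $Q_n(x)=x^n a_n(x+\tfrac1x)$. Cancelling the common factor $x^n$ (legitimate for $x\neq 0$) gives the identity $a_n(z)=\prod_{j|2n+2,\,j\neq 1,2}\psi_j(z)$, valid first for every $z$ of the form $x+\tfrac1x$ with $x\in\mathbf{C}^{\ast}$ and hence for all $z\in\mathbf{C}$, since $x\mapsto x+\tfrac1x$ maps $\mathbf{C}^{\ast}$ onto $\mathbf{C}$ (equivalently, two polynomials agreeing at infinitely many points coincide). Finally $a_n(z)=U_n(z/2)$, so putting $z=2x$ gives $U_n(x)=\prod_{j|2n+2,\,j\neq 1,2}\psi_j(2x)$, as claimed.

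The only point that genuinely needs care is the bookkeeping that forces the powers of $x$ on the two sides to agree, i.e.\ the identity $\sum_{j|2n+2,\,j\neq1,2}\phi(j)=2n$ together with $\deg Q_n=2n$; once this is in place the change of variables is purely formal. A variant that avoids the degree count works at the level of roots: the roots of $Q_n(x)$ are $e^{ik\pi/(n+1)}$ for $k=1,\dots,n,\,n+2,\dots,2n+1$, each conjugate pair $e^{\pm ik\pi/(n+1)}$ corresponds to the root $2\cos\tfrac{k\pi}{n+1}$ of $U_n(x/2)$, and grouping the pairs according to $\gcd(k,2n+2)$ reproduces exactly the polynomials $\psi_j$ with $j\mid 2n+2$, $j\neq 1,2$; this is the mechanism already illustrated in the worked computation of the factorization of $U_5$.
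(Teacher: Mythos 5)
Your proof is correct and follows essentially the same route as the paper: both rest on the factorization $Q_n(x)=\prod_{j\mid 2n+2,\ j\neq 1,2}\Phi_j(x)$ of Proposition \ref{prod} together with Lehmer's relation $\Phi_j(x)=x^{\phi(j)/2}\,\psi_j\!\left(x+\frac{1}{x}\right)$, and your closing ``variant'' via grouping the roots $e^{ik\pi/(n+1)}$ by $\gcd(k,2n+2)$ is in fact the argument the paper itself gives. The only difference is that your main write-up makes the cancellation of the monomial prefactor explicit through $\sum_{d\mid 2n+2}\phi(d)=2n+2$, a small piece of bookkeeping the paper leaves implicit by working at the level of roots.
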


\subsection{Chebyshev polynomials of the first kind}
The roots of $T_n$ are given by
\bd x_k=\cos {\frac{(2k-1) \pi}{2n} }  \ \ \ \ \  \ k=1,2, \dots, n  \ . \ed
Let $h=2n$  ($h$ is the Coxeter number of the root system $B_n$) and $\theta=\frac{\pi}{h}$.
Then
\bd x_k=\cos (2k-1) \theta  \ \ \ \ \  \ k=1,2, \dots, n  \ ,  \ed
i.e. the roots of $T_n$ are
\bd \cos k \theta \ed
where $k$ runs over the exponents of a root system of type $B_n$.

\begin{itemize}
\item $a_n(x)$

The roots of $a_n(x)=2 T_n( \frac{x}{2})$ are
\bd \lambda_k=2 \cos {  \frac{(2k-1) \pi}{h}     }=2 \cos {(2k-1) \theta}   \ \ \ \ \  \ k=1,2, \dots, n  \ . \ed

Denote them by
\bd \lambda_1=2\cos \theta_1=2\cos \theta, \ \ \  \lambda_2=2\cos \theta_2, \dots, \ \ \lambda_n=2\cos \theta_n   \ .\ed
Note that $\theta_k=(2k-1)\theta$ and
\bd \theta_j + \theta_{n+1-j}=\pi  \ . \ed
This implies that $\lambda_j=-\lambda_{n-j+1}$.  As a result
\be \label{minusb}  \{ \lambda_j | j=1,2, \dots, n \}=\{-\lambda_j| j=1,2, \dots, n \}  \ . \ee

\item
$p_n(x)$

The roots of $p_n(x)$ are then
\bd \xi_k=2 + \lambda_k=2+ 2 \cos { \frac{(2k-1)\pi }{2n}}=2 +2\cos \theta_k=4 \cos^2 \frac{\theta_k}{2}  \ . \ed
It follows from (\ref{minusb}) that the eigenvalues of $C$ occur in pairs $\{ 2+ \lambda, 2-\lambda \}$.

\item
$Q_n(x)$

It follows from Theorem \ref{kron} that the roots of $Q_n(x)$ are
\bd e^{i\theta_1}, e^{i\theta_2}, \dots, e^{i\theta_n}, e^{-i \theta_1}, e^{-i\theta_2}, \dots, e^{-i\theta_n} \ , \ed
or equivalently,
\bd e^{i\theta}, e^{3i\theta}, e^{5i \theta}, \dots, e^{(2n-1)i\theta}, e^{(2n+1)i \theta}, e^{(2n+3)i \theta}, \dots, e^{(4n-1)i \theta} \ . \ed
Note that \bd e^{i \theta}=e^{ \frac{i \pi}{h}} =e^{  \frac{2 \pi i}{2h} } =e^{  \frac{2 \pi i}{4n} } \ . \ed
As a result  $e^{i \theta}$ is a $(2h)$th primitive root of unity and therefore  a root of the cyclotomic polynomial $\Phi_{2 h}=\Phi_{4n}$.  The other roots of this cyclotomic polynomial are of course $e^{ki \theta}$  where $(k, 2h)=1$ and $k$ odd. This determines $\Phi_{2h}$ as an irreducible factor of $Q_n(x)$. To determine the other irreducible factors we proceed as follows: Take an odd  divisor $d$ of $4n$. It is of course an odd divisor of $n$ as well.  If we write $n=2^{\alpha} N$ where $N$ is odd, this divisor $d$ is also a divisor of $N$. Use the notation $r=2^{\alpha+2}$ and note that  $2h=4n=rN$.  We group together all $e^{ik \theta}$ where $k$ is odd and $(N, k)=\frac{N}{d}$.  Note that $d=N$ corresponds to $\Phi_{4n}$.  These roots define $\Phi_{rd}$.
You can consider this argument as another proof of Proposition  \ref{prodbn}.

\item
$a_n(x)$

 As in the case of $A_n$  the roots of $a_n(x)$ are of the form $e^{i\theta_k}+ e^{-i \theta_k}$, where $e^{i \theta_k}$ is a root of $Q_n(x)$. Again, the irreducible factor of $Q_n(x)$  are in  one-to-one correspondence with the irreducible factors of $a_n(x)$.  We denote the irreducible factors of $a_n(x)$ with $\psi_n$ as before.

\end{itemize}
\begin{example}
To determine the factorization of  \bd T_5(x)=T_ 5(x) = 16x^5-20x^3+5x  \ . \ed    Since $n=5$, $h=2n=10$ and $\theta=\frac{\pi}{10}$.
The roots of $T_5$ are

\bd  \cos \frac{\pi}{10} \ \ \  \cos\frac{3 \pi}{10} \ \ \  \cos \frac{5\pi}{10} \ \ \  \cos \frac{ 7 \pi}{10} \ \ \  \cos \frac{9 \pi}{10}  \ .  \ed

The  roots of $a_5(x)$ are

\bd  \lambda_1=2 \cos \frac{\pi}{10} \ \ \  \lambda_2=2\cos\frac{3 \pi}{10} \ \ \ \lambda_3= 2\cos \frac{5\pi}{10} \ \ \  \lambda_4=2\cos \frac{ 7 \pi}{10} \ \ \  \lambda_5=2\cos \frac{9 \pi}{10}  \ .  \ed

Therefore, the  roots of $Q_5(x)$ are
\bd e^{ \frac{\pi }{10} }, e^{  \frac{3 \pi }{10} }, e^{  \frac{5 \pi }{10} }, e^{  \frac{7 \pi }{10} }, e^{  \frac{ 9\pi }{10} }, e^{ \frac{11\pi }{10} }, e^{ \frac{13 \pi }{10} }, e^{ \frac{ 15 \pi }{10} }, e^{ \frac{17 \pi }{10} }, e^{19 \frac{\pi }{10} } \ . \ed

We group together
\bd e^{ \frac{\pi }{10} }, e^{ \frac{3\pi }{10} }, e^{ \frac{7 \pi }{10} }, e^{ \frac{9 \pi }{10} }, e^{ \frac{11 \pi }{10} }, e^{ \frac{13 \pi }{10} }, e^{ \frac{17 \pi }{10} }, e^{ \frac{19 \pi }{10} }\  \ed
which are the roots of $\Phi_{20}(x)$. These are all the exponentials $e^{ik\theta}$ with $k$ odd and $(k, 20)=1$.

 Note that
\bd e^{ \frac{\pi }{10} }+e^{19 \frac{\pi }{10} }=e^{ \frac{\pi }{10} }+e^{- \frac{\pi }{10} }=2 \cos \frac{\pi}{10}=\lambda_1  \ , \ed

\bd e^{ \frac{3 \pi }{10} }+e^{ \frac{17\pi }{10} }=e^{ \frac{3 \pi }{10} }+e^{ \frac{-3 \pi }{10} }=\cos \frac{3 \pi}{10}=\lambda_2 \ ,  \ed

\bd e^{ \frac{7 \pi }{10} }+e^{ \frac{13\pi }{10} }=e^{ \frac{7 \pi }{10} }+e^{ \frac{-7 \pi }{10} }=\cos \frac{7 \pi}{10}=\lambda_4 \ ,  \ed

\bd e^{ \frac{9 \pi }{10} }+e^{ \frac{11\pi }{10} }=e^{ \frac{9 \pi }{10} }+e^{ \frac{-9 \pi }{10} }=\cos \frac{9 \pi}{10}=\lambda_5 \ ,  \ed

These roots $\lambda_1$, $\lambda_2$, $\lambda_4$ and $\lambda_5$  are roots of $\psi_{20}=x^4-5 x^2+5$ which is an irreducible factor of $a_5(x)$.

The only other odd divisor of $20$ is $5$. Therefore we group together
\bd e^{ \frac{5 \pi }{10} }, e^{ \frac{15 \pi }{10} } \ed
which are the roots of $\Phi_4(x)$. These are all the exponentials $e^{ik\theta}$ with $k$ odd and $(k, 20)=5$. Noting that $20=2^2 \cdot 5$ we see that $1,5$ are just the positive divisors of $5$.

 Note that
\bd e^{ \frac{5\pi }{10} }+ e^{ \frac{15 \pi }{10} }=e^{ \frac{5 \pi }{10} }+ e^{- \frac{5 \pi }{10} }=2\cos\frac{5 \pi}{10}=\lambda_3=0\ . \ed
Therefore $x$ is the irreducible factor of $a_5(x)$ corresponding to  $\Phi_4(x)$.

We end up with the integer factorization of $a_5(x)$ into irreducible factors:
\bd a_5(x)=x(x^4-5 x^2+5)  \ . \ed
Since $T_5(x)=\frac{1}{2}a_5(2x)$ we obtain the factorization of $T_5(x)$:
\bd T_5(x)=x(16x^4-20 x^2+5)  \ . \ed

\end{example}

\begin{example}
To find the factorization of \bd T_9(x)=256 x^9 -576 x^7+432 x^5 +20 x^3+9x \ . \ed

Since
\bd Q_9(x)=\Phi_4 \Phi_{12} \Phi_{36}  \ , \ed
we have that
\bd a_9(x)=\psi_4 \psi_{12} \psi_{36}  \ . \ed
Since $\psi_4(x)=x$ and $\psi_{12}(x)=x^2-3$ and  $\psi_{36}=x^6-6 x^4+9x^2-2$ (Remark \ref{ex3660})
we have
\bd a_9(x)=x(x^2-3)(x^6-6x^4+9 x^2-3)  \ . \ed
Finally
\bd T_9(x)=\frac{1}{2} a_9(2x)= x(4x^2-3)(64x^6-96x^4+36x^2-3)  \ . \ed

\end{example}

To conclude we state the following result:

\begin{proposition}

Let $n=2^{\alpha} N$ where $N$ is odd and let $r=2^{\alpha+2}$. Then

\bd  T_n(x)=\frac{1}{2}\prod_{\substack{ j|N \\   }} \psi_{r j}(2x) \ . \ed

\end{proposition}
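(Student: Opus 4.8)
The plan is to derive the formula directly from the factorization of $Q_n(x)$ established in \eqref{prodbn}, together with the reciprocal-polynomial correspondence between the irreducible factors of $Q_n(x)$ and those of $a_n(x)$ set up above. First I would observe that the odd divisors of $n=2^{\alpha}N$ are precisely the divisors of $N$, so \eqref{prodbn} can be rewritten as $Q_n(x)=\prod_{d\mid N}\Phi_{rd}(x)$ with $r=2^{\alpha+2}$. Since $r$ is divisible by $4$, every index $rd$ is even and at least $4$, so each $\Phi_{rd}$ is a reciprocal polynomial of even degree $\phi(rd)$ with nonzero constant term; as recalled above (following \cite{lehmer}) it can thus be written uniquely as $\Phi_{rd}(x)=x^{m_d}\,\psi_{rd}\!\left(x+\tfrac1x\right)$ with $m_d=\tfrac12\phi(rd)$, where $\psi_{rd}$ is the irreducible minimal polynomial of $2\cos\frac{2\pi}{rd}$.

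Next I would multiply these identities over all $d\mid N$ to obtain
\[
Q_n(x)=x^{\sum_{d\mid N} m_d}\prod_{d\mid N}\psi_{rd}\!\left(x+\tfrac1x\right).
\]
Comparing degrees, $\deg Q_n=2n$ forces $\sum_{d\mid N}\phi(rd)=2n$, hence $\sum_{d\mid N} m_d=n$; this is also visible directly from $\phi(rd)=\phi(r)\phi(d)=2^{\alpha+1}\phi(d)$ together with $\sum_{d\mid N}\phi(d)=N$. On the other hand $Q_n(x)=x^n a_n\!\left(x+\tfrac1x\right)$ by the definition of the associated polynomial. Cancelling the common factor $x^n$ and using that the map $x\mapsto x+\tfrac1x$ is surjective onto $\mathbf C$ (so an equality of rational functions valid on its image is an equality of polynomials), I get the polynomial identity $a_n(u)=\prod_{d\mid N}\psi_{rd}(u)$.

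Finally I would substitute back using $a_n(x)=2T_n(x/2)$ from Proposition~\ref{main} (the $B_n$, $C_n$ case): replacing $u$ by $2x$ yields $a_n(2x)=2T_n(x)=\prod_{d\mid N}\psi_{rd}(2x)$, and dividing by $2$ gives exactly $T_n(x)=\tfrac12\prod_{j\mid N}\psi_{rj}(2x)$. A quick consistency check: $T_n$ has leading coefficient $2^{n-1}$, so $a_n=2T_n(x/2)$ is monic, matching the monic product $\prod_{d\mid N}\psi_{rd}$, and it is precisely this normalization that produces the constant $\tfrac12$ in the statement.

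I do not anticipate a genuine obstacle; the argument is essentially a change of variable on top of \eqref{prodbn} and the standard identity $\Phi_k(x)=x^{\phi(k)/2}\psi_k\!\left(x+\tfrac1x\right)$ for even $k$. The two points deserving care are the passage from the functional identity on the curve $u=x+\tfrac1x$ to an identity of polynomials in $u$, and the leading-coefficient bookkeeping that fixes the constant $\tfrac12$; one should also note that the indices $rd$ are pairwise distinct as $d$ ranges over the divisors of $N$, so the $\psi_{rd}$ are distinct irreducible polynomials and the displayed factorization of $T_n$ is into pairwise distinct irreducible factors, consistent with $T_n$ being squarefree.
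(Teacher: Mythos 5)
Your proof is correct and follows essentially the same route as the paper: deduce the factorization of $a_n(x)$ into the Lehmer polynomials $\psi_{rd}$ from the cyclotomic factorization of $Q_n(x)$ in \eqref{prodbn} via the identity $\Phi_{rd}(x)=x^{\phi(rd)/2}\psi_{rd}\bigl(x+\tfrac1x\bigr)$, then convert to $T_n$ using $a_n(x)=2T_n(x/2)$. Your version merely makes explicit the degree bookkeeping ($\sum_{d\mid N}\tfrac12\phi(rd)=n$) and the cancellation of $x^n$ that the paper handles by matching roots, which is a welcome tightening but not a different argument.
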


\begin{remark}
In the case of $D_n$
\bd q_n(x)=4x T_{n-1}(x)   \ .\ed
Therefore
\bd a_n(x)=2x T_{n-1}(x) \ .\ed

\bd a_n(x_0)=0 \ \ \ \ \ \ \  \Rightarrow \ \ \ \ \ \ \ \ \ \  \ \ 2 x_0 T_{n-1}(x_0)=0    \ed
\bd \Rightarrow \ \ \  x_0=0, \ \ \ \ {\rm or} \ \ \ \ \ \ \ \ \  x_0=2 \cos \frac {(2k-1) \pi}{2(n-1)}  \ \ \ k=1,2, \dots, n-1  \ .\ed
In summary: The roots of $a_n(x)$ are
\bd 2 \cos \frac{m_i \ \pi} {h}  \ed
where $h$ is the Coxeter number for $D_n$ and $m_i$ are the exponents.  It follows that $0$ is always a root and $a_n(x)=x g_n(x)$ where $g_n(x)$ is the $a_{n-1}$ characteristic polynomial for $B_{n-1}$.
\end{remark}

\section{Exceptional Lie algebras} \label{cex}

\subsection{ $G_2$  }

The Cartan matrix for  $G_2$ is
\bd \begin{pmatrix} 2 & -1 \\
-3& 2 \end{pmatrix} \ . \ed

The characteristic polynomial for a Lie algebra of type $G_2$ is
\bd p_2(x)=x^2-4x +1  \ , \ed
since
\bd q_2(x)=2 T_2(x)- U_0(x)=4x^2 -3 \ed
and
\bd p_2(x)=q_2( \frac{x}{2}-1) =x^2-4x+1  \ . \ed
The roots of $a_2(x)=x^2-3$ are
\bd 2 \cos \frac{ m_i \pi}{h}  \ed
where $m_1=1$ and $m_2=5$ are the exponents of root system of type $G_2$. The Coxeter number $h$ is $6$.

Finally,
\bd Q_2(x)=x^4-x^2+1=\Phi_{12}(x) \ ,  \ed

and \bd f_2(x)=x^2-x+1 =\Phi_6(x)  \ . \ed

\subsection{Graph of type  $F_4$}

The Cartan matrix for $F_4$  is
\bd \begin{pmatrix}  2 & -1 & 0 & 0 \\
-1&2&-2&0 \\
0&-1&2&-1 \\
0&0&-1& 2 \end{pmatrix} \ . \ed

\bd p_4(x)=x^4-8 x^3+20x^2-16 x+1 \ , \ed

and
\bd a_4(x)=x^4-4x^2+1 =\psi_{24}(x) \ . \ed
The roots of $a_4(x)$ are
\bd \frac{1}{2} (\pm \sqrt{6} \pm \sqrt{2} )  \ed
i.e.
\bd 2 \cos  \frac{m_i \ \pi }{12}   \ed
where $m_i  \in \{1,5, 7, 11 \} $.   These are the exponents for $F_4$ and being the numbers less than $12$ and prime to $12$ imply

\bd f_4(x)=x^4-x^2+1=\Phi_{12}(x) \ . \ed

\subsection{ $E_n$ graphs}

\begin{itemize}
\item

 $n=6$

The Cartan matrix for $E_6$ is
\bd
\begin{pmatrix}
2&0&-1& 0& 0 & 0 \\
0&2&0& -1& 0 & 0 \\
-1&0&2& -1& 0 & 0 \\
0&-1&-1& 2& -1 & 0 \\
0&0&0& -1& 2 & -1 \\
0&0&0& 0& -1 & 2
\end{pmatrix}
\ . \ed

\bd q_6(x)=64x^6-80 x^4+20x^2-1 =(2x+1)(2x-1)(16x^4-16x^2+1)   \ed
\bd p_6(x)=(x-1)(x-3)(x^4-8x^3+20x^2-16x +1)  \ed
\bd a_6(x)=x^6-5 x^4+5 x^2-1=(x+1)(x-1)(x^4-4x^2+1)=\psi_3(x) \psi_6(x) \psi_{24}(x) \ed
\bd Q_6(x)=(x^2+x+1)(x^2-x+1)(x^8-x^4+1)  =\Phi_3(x) \Phi_6(x) \Phi_{24}(x) \ .  \ed

The exponents of $E_6$ are $\{1,4,5,7,8,11\}$ and the Coxeter number is $12$. The subset $\{1,5,7,11\}$ produces $\Phi_{12}$ and $\{4,8\}$ produces $\Phi_3$. Therefore
\bd f_6(x)=\Phi_3(x) \Phi_{12}(x)  \ .\ed

The roots of $a_6(x)$ are

\bd \pm 1,  \ \  \frac{1}{2} (\pm \sqrt{6} \pm \sqrt{2} )  \ed
i.e.
\bd 2 \cos  \frac{ m_i \ \pi }{12}   \ed
where $m_i  \in \{1, 4, 5, 7, 8, 11 \} $.   These are the exponents for $E_6$. The Coxeter number is $12$.

\item

$n=7$

The Cartan matrix for $E_7$ is
\bd \begin{pmatrix}
2 &0  &-1  &0 &0 & 0& 0  \\
 0 & 2 & 0 &-1 &0 &0 &0   \\
   -1&0  & 2 &-1 &0 &0 &0   \\
    0&-1  &-1  &2 & -1&0 & 0  \\
    0 & 0 & 0 &-1 &2 &-1 & 0  \\
     0 & 0 &0  &0 &-1 &2 &-1   \\
      0 &0  &0  &0 & 0&-1 &  2
      \end{pmatrix} \ . \ed

\bd q_7(x)=128x^7-192 x^5+72 x^3-6x =2x (64x^6-96x^4+36 x^2-3)    \ed
\bd p_7(x)=(x-2)(x^6-12x^5+54 x^4-112x^3+105 x^2 +1)   \ed
\bd a_7(x)=x^7-6 x^5 +9x^3-3x =x(x^6-6 x^4+9 x^2-3)= \psi_4(x) \psi_{36}(x)  \ed
\bd Q_7(x)=(x^2+1)(x^{12}-x^6 +1)=\Phi_4(x)  \Phi_{36}(x)  \ . \ed

The exponents of $E_7$ are $\{1,5,7,9,11,13,17\}$ and the Coxeter number is $18$. The subset $\{1,5,7,11,13,17\}$ produces $\Phi_{18}$ and $\{ 9\}$ produces $\Phi_2$. Therefore
\bd f_7(x)=\Phi_2(x) \Phi_{18}(x)  \ .\ed

\item

$n=8$

The Cartan matrix for $E_8$ is
\bd \begin{pmatrix}
 2& 0 &-1  &0 &0 & 0&0 & 0 \\
 0 & 2 &0  &-1 &0 &0 & 0& 0 \\
  -1 &0  &2  &-1 &0 &0 &0 &0  \\
   0 &-1  &-1  & 2&-1 &0 &0 &0  \\
    0 &0  &0  &-1 &2 &-1 &0 &0  \\
     0 &0  &0  &0 &-1 &2 &-1 &0  \\
      0 & 0 & 0 &0 &0 &-1 &2 &-1  \\
        0& 0 &0  &0 &0 &0 &-1 &2  \\
        \end{pmatrix}
        \ . \ed

\bd q_8(x)=256 x^8 -448 x^6 +224 x^4 -32 x^2 +1    \ed
\bd p_8(x)=x^8-16x^7+105 x^6 +364 x^5 +714 x^4 -784 x^3 +440 x^2 -96 x+1 \ed
\bd a_8(x)=x^8-7x^6+14 x^4-8x^2+1=\psi_{60} (x)\ed
\bd Q_8(x)= x^{16} +x^{14} -x^{10}-x^8-x^6+x^2+1 = \Phi_{60}(x)  \ . \ed
The exponents of $E_8$ are $\{1,7,11,13,17,19,23,29 \}$ which are the positive integers less than $30$ and prime to $30$.  Therefore
\bd f_8(x)= \Phi_{30}(x)  \ .\ed
\end{itemize}

\section{The sine formula}

To prove the sine formula we only  will need the following Lemma which was  already proved by case to case verification:

 \begin{lemma} \label{rootscoxeter}  The  roots of $a_n(x)$ are
\bd 2 \cos \frac{ m_i \pi}{h}   \ed
where $m_i$ are the exponents of $\mathfrak{g}$ and $h$ is the Coxeter number of $\mathfrak{g}$.
\end{lemma}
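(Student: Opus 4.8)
The plan is to derive the statement from ingredients already in hand—the associated polynomial $Q_n$ and the Coxeter polynomial $f_n$—rather than running a new case check, though I will note at the end that the case-by-case verification is also immediate. First I would dispose of a technical point: although $A=2I-C$ fails to be symmetric for the non-simply-laced types, $C$ is symmetrizable, so $A$ is conjugate to a real symmetric matrix and $a_n(x)=\det(xI-A)$ has $n$ real roots; moreover, since the Dynkin diagram is a tree, the spectrum of $A$ is symmetric about $0$, as recalled in the text. Writing the roots of $a_n$ as $2\cos\theta_1,\dots,2\cos\theta_n$ with $\theta_j\in[0,\pi]$, this symmetry gives $\theta_j+\theta_{n+1-j}=\pi$.

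Next I would invoke the two identities already established. On one hand $Q_n(x)=x^n a_n(x+1/x)=\prod_j(x^2-2x\cos\theta_j+1)=\prod_j(x-e^{i\theta_j})(x-e^{-i\theta_j})$, so the $2n$ roots of $Q_n$ are the $e^{\pm i\theta_j}$; on the other hand $Q_n(x)=f_n(x^2)$, so these are exactly the square roots of the roots of the Coxeter polynomial $f_n$. By the theorem of Coxeter and Coleman recalled above (\cite{coleman2}, \cite{coxeter2}), the roots of $f_n$ are $\zeta^{m_1},\dots,\zeta^{m_\ell}$ with $\zeta=e^{2\pi i/h}$ and $m_1,\dots,m_\ell$ the exponents, so the roots of $Q_n$ are $\pm e^{\pi i m_i/h}$. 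A short bookkeeping then reconciles the two lists: using the duality $m_i+m_{\ell+1-i}=h$ of (\ref{dual}) one checks $-e^{\pi i m_i/h}=e^{-\pi i m_{\ell+1-i}/h}$, so $\{\pm e^{\pi i m_i/h}\}=\{e^{\pm\pi i m_i/h}\}$ as multisets; comparing with $\{e^{\pm i\theta_j}\}$ and applying $z\mapsto z+z^{-1}$ on the unit circle forces $\{2\cos\theta_j\}=\{2\cos(\pi m_i/h)\}$. Since every exponent satisfies $1\le m_i\le h-1$, all angles $\pi m_i/h$ lie in $(0,\pi)$, so no degeneracy at $\pm2$ intervenes and this is precisely the claimed list. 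The step I expect to require the most care is this bookkeeping—tracking multiplicities when eigenvalues repeat (as for the exponent $n-1$ of $D_n$), and confirming that the reciprocal pairs among the roots of $Q_n$ are the $\{e^{i\theta_j},e^{-i\theta_j}\}$; the latter holds because $a_n$ has real coefficients, which forces each quadratic factor $x^2-\mu_k x+1$ to have conjugate, hence reciprocal, roots on the unit circle.

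For completeness I would also record the direct verification, which is what the parenthetical "case to case" refers to. By Proposition \ref{main} and (\ref{rootsan}), for $A_n$ the roots of $a_n(x)=U_n(x/2)$ are $2\cos(j\pi/(n+1))$, matching the exponents $1,\dots,n$ and $h=n+1$ of Table \ref{coxetern}; by Proposition \ref{main} and (\ref{rootsbn}), for $B_n,C_n$ the roots of $a_n(x)=2T_n(x/2)$ are $2\cos((2j-1)\pi/(2n))$, matching $1,3,\dots,2n-1$ and $h=2n$; for $D_n$ the roots of $a_n(x)=2xT_{n-1}(x/2)$ are $0$ together with the $2\cos((2j-1)\pi/(2n-2))$, which are the $2\cos(m_i\pi/(2n-2))$ for $m_i\in\{1,3,\dots,2n-3,n-1\}$ since $2\cos\frac{\pi}{2}=0$; and for $E_6,E_7,E_8,F_4,G_2$ the explicit factorizations of Section \ref{cex} exhibit the roots of $a_n$ in the form $2\cos(m_i\pi/h)$. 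In this route there is no genuine obstacle beyond the (symbolic) computation for the exceptional types.
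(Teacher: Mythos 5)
Your proposal is correct, and it actually subsumes the paper's own proof: the paper establishes Lemma~\ref{rootscoxeter} purely by the case-by-case verification carried out in Sections~\ref{can}--\ref{cex} (the roots of $U_n(x/2)$, $2T_n(x/2)$ and $2xT_{n-1}(x/2)$ read off from (\ref{rootsan}) and (\ref{rootsbn}), plus the explicit factorizations for $E_6,E_7,E_8,F_4,G_2$), which is exactly your final paragraph, and then only \emph{sketches}, with pointers to \cite{coleman} and \cite{moody}, the case-free route through the Coxeter polynomial. What you do differently is to carry that sketch out in full: passing to $Q_n(x)=x^na_n(x+1/x)=f_n(x^2)$, invoking the Coxeter--Coleman description of the roots of $f_n$ as $\zeta^{m_i}$, and doing the multiplicity bookkeeping via the duality (\ref{dual}) to land back on the multiset $\{2\cos(m_i\pi/h)\}$; your observation that $0<m_i<h$ keeps every root of $Q_n$ away from $\pm1$, so that $z\mapsto z+z^{-1}$ is uniformly two-to-one, is the right way to handle repeated eigenvalues such as the exponent $n-1$ of $D_n$. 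This buys a uniform argument whose only type-dependent input is the list of exponents, at the price of one imported ingredient: the identity $Q_n(x)=f_n(x^2)$ is itself only verified type by type in this paper, so your route is genuinely case-free only if that relation between the characteristic polynomial of the Coxeter adjacency matrix and the Coxeter polynomial of a tree is taken from the literature (cf.\ \cite{aCampo}, \cite{moody}). With that granted, both routes are sound and complete.
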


 Let $\theta_i=\frac{ m_i \pi}{h} $. Then the roots of $a_n(x)$ are $\lambda_i=2 \cos \theta_i, \ \ i=1,2, \dots, \ell$.

Recall the duality property of the exponents (\ref{dual}).
\bd m_i+m_{\ell+1-i} = h  \  . \ed

It follows that
\bd m_i \frac{\pi}{h} + m_{\ell+1-i} \frac{\pi}{h}= \pi  \ .\ed
As a result:
\bd \theta_i+\theta_{\ell+1-i}=\pi  \ . \ed

Using this formula,  we can infer  a relationship satisfied by the  roots of $p_n(x)$ which are:
\bd \xi_i=4 \cos^2 \frac{\theta_i}{2}  \ . \ed

Namely:
\bd
 \begin{array}{lcl}
 \xi_i+\xi_{\ell+1-i}&=&4 \cos^2 \frac{\theta_i}{2}+4 \cos^2 \frac{\theta_{\ell+1-i}}{2}\\
                       &=& 4 \left( \cos^2 \frac{\theta_i}{2}+\cos^2 \frac{\pi- \theta_i}{2} \right)\\
                       &=&4 \left( \cos^2 \frac{\theta_i}{2}+\sin^2 \frac{ \theta_i}{2} \right)  \\
                       &=& 4     \end{array}  \ed

 It follows that the eigenvalues of the Cartan matrix occur in pairs $\xi$, $4 -\xi$.

\begin{remark} The fact that the  eigenvalues of $C$ occur in pairs  $\{  \xi, 4-\xi \}$, and a different line of proof  can be found in  \cite[p. 345]{moody}.
\end{remark}

\begin{theorem}
Let  $\mathfrak{g}$ be a complex simple Lie algebra of rank $\ell$,  $h$ the Coxeter number,  $m_1, m_2, \dots, m_{\ell}$ the exponents of $\mathfrak{g}$  and $C$ the Cartan matrix.  Then

\bd 2^{2 \ell} \prod_{i=1}^{\ell}  \sin^2 {\frac{ m_i \pi}{2h}     } ={\rm det}\ C  \ . \ed
\end{theorem}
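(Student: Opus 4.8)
The plan is to read the eigenvalues of $C$ directly off Lemma~\ref{rootscoxeter} and then use the exponent duality (\ref{dual}) to convert the resulting cosines into the sines that appear in the statement; the determinant of $C$ is the product of its eigenvalues, so the identity will drop out after a one-line reindexing.

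First I would record the spectrum of the Cartan matrix. By Lemma~\ref{rootscoxeter} the roots of $a_n(x)$ are $2\cos\frac{m_i\pi}{h}$ for $i=1,\dots,\ell$, and since $a_n(x)=p_n(x+2)$ while $p_n(x)=\det(xI-C)$, the eigenvalues of $C$ are
\bd
\xi_i = 2 + 2\cos\frac{m_i\pi}{h} = 4\cos^2\frac{m_i\pi}{2h}, \qquad i=1,\dots,\ell .
\ed
In particular each $\xi_i>0$, and because the determinant is the product of the eigenvalues,
\bd
\det C = \prod_{i=1}^{\ell}\xi_i = \prod_{i=1}^{\ell} 4\cos^2\frac{m_i\pi}{2h} = 2^{2\ell}\prod_{i=1}^{\ell}\cos^2\frac{m_i\pi}{2h} .
\ed
Next I would invoke the duality $m_i+m_{\ell+1-i}=h$, i.e. $\theta_i+\theta_{\ell+1-i}=\pi$ with $\theta_i=\frac{m_i\pi}{h}$, which gives
\bd
\cos\frac{m_i\pi}{2h}=\cos\frac{\theta_i}{2}=\cos\frac{\pi-\theta_{\ell+1-i}}{2}=\sin\frac{\theta_{\ell+1-i}}{2}=\sin\frac{m_{\ell+1-i}\pi}{2h} .
\ed
Since $i\mapsto \ell+1-i$ is a permutation of $\{1,\dots,\ell\}$, substituting this into the product above and reindexing replaces $\prod_i\cos^2\frac{m_i\pi}{2h}$ by $\prod_i\sin^2\frac{m_i\pi}{2h}$, yielding
\bd
2^{2\ell}\prod_{i=1}^{\ell}\sin^2\frac{m_i\pi}{2h} = \det C ,
\ed
which is the claim.

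The computational content here is negligible; the only substantive input is Lemma~\ref{rootscoxeter} — equivalently, the identification of the spectrum of $C$ with $\{\,4\cos^2(m_i\pi/2h)\,\}$ — so that is where the real work sits. In this paper that lemma is obtained by the explicit case-by-case analysis of Sections~\ref{can}--\ref{cdn} and Section~\ref{cex}; alternatively it can be deduced uniformly from Coxeter's empirical observation that the roots of the Coxeter polynomial are $\zeta^{m_i}$ together with Kostant's identity $h\ell=2r$, as sketched before the statement. Once Lemma~\ref{rootscoxeter} is granted the sine formula is immediate, and one may note as a sanity check that it specializes to the $A_n$ and $B_n$ sine formulas (\ref{det}) and (\ref{detbn}) and reproduces the entries of Table~\ref{detlist} (for instance, for $E_8$ all exponents are coprime to $h=30$, giving $\det C=1$).
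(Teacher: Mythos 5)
Your proof is correct and rests on the same two inputs as the paper's: Lemma~\ref{rootscoxeter} for the spectrum of $C$ and the exponent duality $m_i+m_{\ell+1-i}=h$. The only (cosmetic) difference is in the bookkeeping: the paper evaluates $a_n(2)$ with the half-angle identity $1-\cos\theta=2\sin^2\frac{\theta}{2}$ and uses the pairing $\xi\mapsto 4-\xi$ to get $a_n(2)=p_n(4)=\det C$, whereas you write $\det C=\prod_i 4\cos^2\frac{m_i\pi}{2h}$ and use the duality to trade cosines for sines by reindexing — essentially the same argument.
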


\begin{proof}
It follows from Lemma (~\ref{rootscoxeter}) that

\bd a_n(x)=  \prod_{i=1}^{\ell} \left( x- ( 2 \cos  \frac{ m_i \ \pi }{h} )\right)  \ . \ed
Set $x=2$.

\begin{align*}
a_n(2)&= \prod_{i=1}^{\ell} \left( 2- ( 2 \cos  \frac{ m_i \ \pi }{h}) \right) \\
&=2^l \prod_{i=1}^{\ell} \left(1 -\cos  \frac{ m_i \ \pi }{h} \right)  \\
&=2^l \prod_{i=1}^{\ell} 2 \sin^2  \frac{ m_i \ \pi }{2h} \\
&=2^{2l} \prod_{i=1}^{\ell}  \sin^2  \frac{ m_i \ \pi }{2h}  \ .
\end{align*}

To prove the formula we calculate $a_n(2)$. We have
\bd p_n(x)=(x-\xi_1)(x-\xi_2) \dots (x-\xi_{\ell})= (x-(4-\xi_1))(x-(4-\xi_2)) \dots (x-(4-\xi_{\ell})) \ . \ed
This implies that $p_n(4)=\xi_1 \xi_2 \dots \xi_{\ell}=  {\rm det} \ C$.
Since $a_n(x)=p_n(x+2)$ we have
\bd a_n(2)=p_n(4)= {\rm det} \ C \ .\ed
\end{proof}

\bibliographystyle{amsplain}

\end{document}